\def\f{\textbf{f}}
\def\R{\mathbb{R}}
\newcommand{\bmat}{\left[\begin{matrix}}
\newcommand{\emat}{\end{matrix}\right]}
\newtheorem{theorem}{Theorem}
\newtheorem{proposition}[theorem]{Proposition}
\newtheorem{lemma}[theorem]{Lemma}
\newtheorem*{corollary}{Corollary}
\numberwithin{theorem}{section}
\numberwithin{equation}{section}
\theoremstyle{remark}
\newtheorem*{remark}{Remark}
\theoremstyle{definition}
\newcommand{\Z}{\mathbb{Z}}
\newcommand{\Q}{\mathbb{Q}}
\newcommand{\C}{\mathbb{C}}
\newcommand{\spn}{\mathrm{span}}
\newcommand{\A}{\mathbb{A}}
\newcommand{\GL}{\mathrm{GL}}
\newcommand{\SL}{\mathrm{SL}}
\newcommand{\rk}{\mathrm{rk}}
\newcommand{\gp}{\mathfrak{p}} 
\newcommand{\cO}{\mathcal{O}}
\newcommand{\vol}{\mathrm{vol}}
\newcommand{\diag}{\mathrm{diag}}
\newcommand{\tr}{\mathrm{tr}}
\newcommand{\ord}{\mathrm{ord}}
\newcommand{\Log}{\mathrm{Log}}
\newcommand{\F}{\mathbb{F}}
    \title{Adelic Rogers integral formula}
    \author{Seungki Kim}
\newcommand{\Addresses}{{
  \bigskip
  \footnotesize

  Seungki Kim, \textsc{Department of Mathematical Sciences, University of Cincinnati, Cincinnati OH 40503}\par\nopagebreak
  \textit{E-mail address}: \texttt{seungki.math@gmail.com}

}}
\begin{document}

\maketitle

\begin{abstract}
We formulate and prove the extension of the Rogers integral formula (\cite{Rog55}) to the adeles of number fields. We also prove the second moment formulas for a few important cases, enabling a number of classical and recent applications of the formula to extend immediately to any number field.
\end{abstract}

\section{Introduction}

\subsection{History and motivation}

The Rogers integral formula \cite{Rog55} (also see Theorem \ref{thm:classic} below) is one of the main tools in the geometry of numbers, that allows one to study the statistical properties of the points of a random lattice. It is the natural generalization of the celebrated Siegel integral formula \cite{Sie45}, which Siegel stated at the end of the paper without proof.

\begin{theorem}[Rogers \cite{Rog55}, Siegel \cite{Sie45} for case $k=1$] \label{thm:classic}
Let $k < n$ be positive integers, and $f:(\R^n)^k \rightarrow \R$ be a Borel integrable function. Let $X_n = \SL(n,\Z) \backslash \SL(n,\R)$ be the moduli space of the lattices in $\R^n$ of covolume $1$, and $\mu$ be the unique right $\SL(n,\R)$-invariant probability measure on $X_n$. Then
\begin{equation} \label{eq:classic}
\int_{X_n} \sum_{x_1, \ldots, x_k \in \Z^n \atop \mathrm{indep.}} f(x_1g, \ldots, x_kg) d\mu(g) = \int_{(\R^n)^k} f(x_1, \ldots, x_k)dx_1\ldots dx_k.
\end{equation}
In addition, let us interpret $(\R^n)^k = \mathrm{Mat}_{k \times n}(\R)$, the set of all $k \times n$ matrices over $\R$. Then
\begin{align*}
& \int_{X_n} \sum_{X \in \mathrm{Mat}_{k \times n}(\Z) \atop \mathrm{no\,zero\,rows}} f(Xg) d\mu(g) \\
=& \int_{(\R^n)^k} f(X)dX + \sum_{m=1}^{k-1} \sum_D N(D)^n \int_{(\R^n)^m} f(D^{\tr}X) dX,
\end{align*}
where the sum over $D$ is over all $m \times k$ row-reduced echelon forms over $\Q$ of rank $m$, $N(D)$ is the density of the vectors $x \in \Z^m$ such that $xD \in \Z^k$.
\end{theorem}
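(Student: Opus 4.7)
My plan is to follow the Siegel--Rogers orbit-decomposition strategy in two stages: first establish (\ref{eq:classic}) by an invariance argument, then reduce the general formula to (\ref{eq:classic}) via a canonical decomposition of $\SL(n,\Z)$-orbits of integer matrices with no zero rows.

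For (\ref{eq:classic}), I consider the positive linear functional $L(f) = \int_{X_n} \sum_{X\,\mathrm{rank}\,k} f(Xg)\,d\mu(g)$ on $C_c$ of the open set of linearly independent $k$-tuples in $(\R^n)^k$. Right-invariance of $\mu$ makes $L$ invariant under the diagonal right $\SL(n,\R)$-action on $(\R^n)^k$. Since $k<n$, this action is transitive on linearly independent $k$-tuples, so uniqueness of the invariant Radon measure gives $L(f) = c\int_{(\R^n)^k}f\,dX$; a standard test-function computation (using that $\Z^n$ has covolume $1$) pins down $c=1$.

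For the general formula I partition integer $k\times n$ matrices $Y$ with no zero rows by rank $m \le k$, and assign to each $Y$ its unique RREF decomposition $Y = D^{\tr}X$: if $j_1<\cdots<j_m$ index a maximal $\Q$-independent set of rows of $Y$, then $X = Y[\{j_1,\ldots,j_m\};\,:] \in \mathrm{Mat}_{m\times n}(\Z)$ has rank $m$, and $D \in \mathrm{Mat}_{m\times k}(\Q)$ is the unique rank-$m$ RREF matrix with pivots at $j_1,\ldots,j_m$ whose other columns encode how the remaining rows of $Y$ are $\Q$-combinations of rows of $X$. For fixed $D$, the map $X \mapsto D^{\tr}X$ bijects onto such integer $Y$'s exactly when $X$ ranges over $\mathrm{Mat}_{m\times n}(\Z)$ of rank $m$ with each column $x$ satisfying $x^{\tr}D \in \Z^k$; the no-zero-rows condition on $Y$ is equivalent to $D$ having no zero column. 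Writing $\Lambda_D^* = \{x\in\Z^m : xD\in\Z^k\}$ and picking a basis matrix $B$ (with $|\det B| = N(D)^{-1}$), the substitution $X = B^{\tr}X'$ with $X' \in \mathrm{Mat}_{m\times n}(\Z)$ of rank $m$ absorbs the admissibility constraint, and $D^{\tr}X = (BD)^{\tr}X'$; applying (\ref{eq:classic}) with $k$ replaced by $m$ (valid since $m \le k < n$) to $X' \mapsto f((BD)^{\tr}X')$ yields
\[
\int_{X_n}\sum_{X'} f\bigl((BD)^{\tr}X'g\bigr)\,d\mu(g) = \int_{(\R^n)^m} f\bigl((BD)^{\tr}X'\bigr)\,dX'.
\]
Changing variables back via $X' = B^{-\tr}X$ introduces $dX' = N(D)^n\,dX$, so this equals $N(D)^n\int_{(\R^n)^m} f(D^{\tr}X)\,dX$. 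Summing over $(m,D)$ --- with $m=k$ forcing $D = I_k$, $N(I_k)=1$, reproducing the first summand --- gives the claimed identity.

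The main obstacle will be the combinatorial bookkeeping of the preceding paragraph: verifying the bijection between integer $Y$'s (no zero rows) and pairs $(D,X')$ (no zero columns of $D$), identifying $N(D)$ with the density of $\Lambda_D^*$ in $\Z^m$, and pinning down $|\det B| = N(D)^{-1}$. Questions of absolute convergence and the interchange of summation with integration are handled as usual by first reducing to nonnegative $f \in C_c$ and invoking monotone convergence.
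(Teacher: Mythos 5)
Note first that the paper does not in fact prove Theorem \ref{thm:classic}; it is quoted as a result of Rogers and Siegel, and what the paper proves is the adelic generalization (Theorem \ref{thm:intro_main}), from which the classical statement would follow by specializing to $F=\Q$. The mechanism for the base case there is to average $\sum_X f(Xg)$ over Hecke operators $T_{\gp_i}$, prove pointwise convergence to the expected volume integral (Proposition \ref{prop:main}), and then recover the integral identity via a Fatou/monotone-convergence sandwich (Lemmas \ref{lemma:hecke_invt} and \ref{lemma:equiv}). Your route to \eqref{eq:classic} --- regarding the left side as a right-$\SL(n,\R)$-invariant positive functional on the rank-$k$ stratum of $(\R^n)^k$, and using transitivity of $\SL(n,\R)$ on that stratum (this is where $k<n$ enters) together with uniqueness of the invariant Radon measure --- is the Siegel--Weil argument. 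It is genuinely different both from Rogers' original unipotent-averaging proof and from the Hecke-averaging used in this paper; over $\Q$ it is the cleanest proof, and the paper prefers Hecke averaging precisely because it transports verbatim to the adeles, where the transitivity/uniqueness argument becomes delicate (one sums over $F$-rational $X$ but integrates against Tamagawa measure on $(\A_F^n)^k$, and the relevant homogeneous-space picture is murkier).

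There is, however, a real gap in the first half of your sketch. To deduce that your functional $L$ equals a constant times Lebesgue measure from invariance plus uniqueness, you must first know that $L$ is a Radon measure, i.e.\ that $L(f_0) < \infty$ for some nonzero nonnegative compactly supported continuous $f_0$; a priori the invariant functional could be identically $+\infty$. Establishing this finiteness is Siegel's integrability criterion, and it is where reduction theory does actual work --- it is the genuine technical content of the Siegel--Weil approach, and the analogue in the paper of Proposition \ref{prop:main} plus Lemma \ref{lemma:equiv}. Your ``standard test-function computation'' determines the constant only after finiteness is secured. The second half of your argument --- stratifying the no-zero-rows sum by rank, writing $Y = D^{\tr}X$ via a canonical RREF decomposition, identifying $\{x \in \Z^m : xD \in \Z^k\}$ as a finite-index sublattice, and absorbing the denominators of $D$ into a change of variables with Jacobian $N(D)^n$ so that \eqref{eq:classic} (with $m$ in place of $k$) applies --- is correct, and coincides with what the paper does for the adelic case in Section 4.5. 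As you correctly observe, only $D$ with no zero column occur in the decomposition of a no-zero-rows $Y$; an RREF with a zero column in position $i$ produces $Y$ with zero row $i$, so the theorem's phrase ``all row-reduced echelon forms'' must be read with that restriction understood.
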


Since their development in the mid-twentieth century, these formulas of Siegel and Rogers --- called the \emph{mean value formulas} in the literature --- have been applied extensively to the study of lattice problems, such as the packing and covering of $\R^n$ by spheres, and they still stand as one of the most powerful tools for such problems. Indeed, Davenport-Rogers \cite{DR47} had provided the best known lower bound $\approx 1.68 n2^{-n}$ for the sphere packing density in dimension $n$, until Ball \cite{Bal92} improved it to $\approx 2n2^{-n}$ several decades later. After a few more decades, Venkatesh \cite{Ven13} adopted the method of Rogers \cite{Rog47} to further improve it to $\approx 65963n2^{-n}$ for all sufficiently large $n$, which is the current best record. Moreover, Rogers \cite{Rog59} remains the best known bound on the covering density by a ball to this day, although for the covering by the general convex bodies, there is a recent substantial improvement by Ordentlich-Regev-Weiss \cite{ORV20}.

One also considers the distributional variants of these lattice problems, for instance, questions such as ``What is the $\mu$-measure of lattices that yield at least this much packing density?'' Investigating problems of this kind relies heavily, if not entirely, on the Rogers integral formula. The quoted problem above, in particular, had been intensively studied in a series of works by Rogers (see e.g. \cite{Rog55-1}, \cite{Rog56}) and Schmidt (e.g. \cite{Sch58}, \cite{Sch59}; also see the references in \cite{Sch59}). Recently, their ideas and methods have been further developed by the author (\cite{Kim16}, \cite{Kim20}), S\"odergren (e.g. \cite{Sod11}, \cite{Sod11-1}) and Str\"ombergsson-S\"odergren \cite{SS19}, with applications to the study of the Epstein zeta functions (\cite{Sod13}, \cite{SS18}).

Perhaps surprisingly, nowadays the mean value formulas have become a standard part of arsenal for homogeneous dynamics as well. Since the late 1990's to the present day, they have been employed in countless works in the field for various purposes, including a number of influential works, such as Eskin-Margulis-Mozes \cite{EMM98} on the quantitative Oppenheim conjecture, Kleinbock-Margulis \cite{KM99} and Athreya-Margulis \cite{AM09} on one-parameter flows, Markl\"of-Str\"ombergsson \cite{MS10} and Athreya-Margulis \cite{AM15} on counting problems, and so on. To quickly give one concrete application among many, the Rogers formula yields the upper bound portion of a logarithm law for free, that is, a statement of the form
\begin{equation*}
\limsup_{t \rightarrow \infty} \frac{\Delta(\Lambda g_t)}{\log t} = \alpha,
\end{equation*}
where $\Delta$ is some function on $X_n$, $\Lambda$ is a generic element of $X_n$, $(g_t)_{t \in \R}$ is a one-parameter subgroup of $\SL(n,\R)$, and $\alpha$ is a constant depending only on $\Delta$ and $n$ --- see \cite{KM99}, \cite{AM09}, \cite{KS21} for instance.

Naturally, there have been numerous efforts to extend the mean value formulas to a variety of contexts. There are variants that impose various different conditions on the sum on the left-hand side of \eqref{eq:classic}, such as summing over the primitive elements of $\Z^n$ --- see Section 1 of Schmidt \cite{Sch59} for examples. More recently, there are versions for the rational points on Grassmannians \cite{Kim22}, translation surfaces (\cite{Vee98}, see also related \cite{ACM19}), and for cut-and-project sets \cite{RSW20}, the latter two proved by ergodic theoretic methods. There are also extensions to $S$-unit lattices \cite{Han19}, to $\mathrm{ASL}(n,\R)$ (\cite{EMV15}, \cite{AGH21}), and to sums over translates of $\Z^n$ (\cite{GKY??}, \cite{AGH21}), which require one to consider the quotient of $\SL(n,\R)$ with respect to a congruence subgroup.

\subsection{The main theorem}

The goal of the present paper is to prove the following theorem, which extends the Rogers integral formula to the adele of a number field.

\begin{theorem}\label{thm:intro_main}
Let $F$ be a number field, and $\mathcal X_n$ be $\GL(n,F) \backslash G_n$, where
\begin{equation*}
G_n = \{A \in \GL(n, \A_F) : \|\det A\|_{\A_F} = 1 \}.
\end{equation*}
Also let $1 \leq k < n$. Then for a Borel integrable function $f: (\A_F^n)^k \rightarrow \R$, we have
\begin{equation*}
\int_{\mathcal X_n} \sum_{X \in (F^n)^k \atop \mathrm{rows\, indep.}} f(Xg) d\mu = \int_{(\A_F^n)^k} f(X) d\alpha_F^{nk},
\end{equation*}
where $\mu$ is the unique right $G_n$-invariant probability measure on $\mathcal X_n$, and $\alpha_F$ is the Tamagawa measure on $\A_F$, the Haar measure on $\A_F$ normalized so as to descend to the Haar probability measure on $\A_F / F$.

In addition, we have
\begin{align*}
&\int_{\mathcal X_n} \sum_{X \in (F^n)^k \atop \mathrm{no\, zero\, rows}} f(Xg) d\mu \\
=&\int_{(\A_F^n)^k} f(X) d\alpha_F^{nk} + \sum_{m=1}^{k-1} \sum_D \int_{(\A_F^n)^m} f(D^{\tr}X) d\alpha_F^{nm},
\end{align*}
where the sum over $D$ is over all $m \times k$ row-reduced echelon forms over $F$ of rank $m$.
\end{theorem}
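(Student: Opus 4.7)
My plan is to adapt the standard unfolding-and-normalization strategy for mean-value formulas to the adelic setting. I will first prove the rank-$k$ identity, then derive the no-zero-rows identity from it by stratification.

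For the first identity, let $Y_0 = [I_k \,|\, 0] \in (F^n)^k$ be the standard rank-$k$ frame, and let $\tilde H_k \subset \GL_n$ be its right-stabilizer: the subgroup whose top-left $k \times k$ block is $I_k$ and top-right $k \times (n-k)$ block is $0$. The group $\GL(n,F)$ acts transitively on the rank-$k$ tuples in $(F^n)^k$ with stabilizer $\tilde H_k(F)$, so a standard unfolding identifies the left-hand side as
\[
\int_{\mathcal X_n} \sum_{\substack{X \in (F^n)^k \\ \mathrm{rank}\, k}} f(Xg) \, d\mu = \int_{\tilde H_k(F) \backslash G_n} f(Y_0 g) \, d\mu.
\]
I then analyze the map $\phi : G_n \to (\A_F^n)^k$, $g \mapsto Y_0 g$. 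Its image $W$ consists of tuples of local $F_v$-rank $k$ at every place, and since there are countably many places and each local rank-drop locus is Zariski closed of positive codimension, $(\A_F^n)^k \setminus W$ has $\alpha_F^{nk}$-measure zero. The fibers of $\phi|_{G_n}$ over $W$ are right cosets of $\tilde H_k(\A_F)^1 := \tilde H_k(\A_F) \cap G_n$; this subgroup is unimodular, because on $\tilde H_k \cong \GL_{n-k} \ltimes \mathrm{Mat}_{(n-k)\times k}$ the modular character equals $|\det|^k$, which trivializes on the $|\det|=1$ slice by the product formula. Disintegrating $\mu$ through the fibration and invoking uniqueness of the $G_n$-invariant measure on the transitive $G_n$-set $W$, I obtain
\[
\int_{\tilde H_k(F) \backslash G_n} f(Y_0 g) \, d\mu = V \cdot c \cdot \int_{(\A_F^n)^k} f(X) \, d\alpha_F^{nk}(X),
\]
where $V = \vol(\tilde H_k(F) \backslash \tilde H_k(\A_F)^1)$ and $c$ is the scalar relating the pushforward quotient measure on $W$ to $\alpha_F^{nk}|_W$.

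The main technical obstacle is showing $V \cdot c = 1$ under the stated normalizations ($\mu$ probability on $\mathcal X_n$; $\alpha_F$ the Tamagawa measure on $\A_F$). Although $V$ and $c$ individually depend on auxiliary Haar-measure choices on $\tilde H_k(\A_F)^1$, their product is intrinsic. Unwinding the decomposition $\tilde H_k \cong \GL_{n-k} \ltimes \mathrm{Mat}_{(n-k)\times k}$ expresses $V$ in terms of the Tamagawa volume of $\GL_{n-k}$ and an affine-space factor, while $c$ is pinned down by $\mu(\mathcal X_n) = 1$ --- equivalently, by the known Tamagawa volume of $\GL(n,F) \backslash G_n$. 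The identity $V \cdot c = 1$ then reduces to consistency of these Tamagawa normalizations, at its core the classical fact that the Tamagawa number of $\SL_n$ is $1$. This bookkeeping is tedious but routine.

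Finally, the second identity follows from the first by rank stratification. Every $X \in (F^n)^k$ with no zero rows and $F$-rank $m$ factors uniquely as $X = D^\tr Y$, where $D \in \mathrm{Mat}_{m \times k}(F)$ is in row-reduced echelon form of rank $m$ (necessarily with no zero columns, corresponding to $X$ having no zero rows) and $Y \in (F^n)^m$ has rank $m$; this is the canonical form of $D$ under the left $\GL_m$-action on rank-$m$ matrices, corresponding to change-of-basis of the row span of $X$. Applying the rank-$m$ case of the first identity to the function $Y \mapsto f(D^\tr Y)$ yields
\[
\int_{\mathcal X_n} \sum_{\substack{Y \in (F^n)^m \\ \mathrm{rank}\, m}} f(D^\tr Y g) \, d\mu = \int_{(\A_F^n)^m} f(D^\tr Y) \, d\alpha_F^{nm},
\]
and summing over admissible $(m, D)$ for $m = 1, \ldots, k$ gives the second identity; the $m = k$ stratum (unique RREF $D = I_k$) furnishes the leading $\int_{(\A_F^n)^k} f \, d\alpha_F^{nk}$.
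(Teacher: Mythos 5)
Your proposal takes a genuinely different route from the paper's. You use the unfolding (Siegel--Weil) strategy: identify the sum over rank-$k$ frames as a single $\GL(n,F)$-orbit with stabilizer $\tilde H_k(F)$, unfold, and disintegrate through the fibration $g \mapsto Y_0 g$, thereby reducing everything to verifying a single normalization constant $V \cdot c = 1$. The paper instead follows Rogers' original line of attack, repaired: it averages the summatory function against a family of Hecke operators $T_\gp$ as $N\gp \to \infty$, shows the average converges pointwise to the intended limit (Proposition \ref{prop:main}), and concludes via the invariance and Fatou-type lemmas (Lemmas \ref{lemma:hecke_invt}, \ref{lemma:equiv}). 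The trade-off is this: your route is shorter in outline, but the normalization you defer as ``tedious but routine'' is exactly the hard part --- it is essentially equivalent to the Tamagawa number of $\SL_n$ being $1$, and for $k=1$ it \emph{is} the statement of Weil's theorem. The paper's Hecke-operator argument, by contrast, is self-contained and presupposes no Tamagawa volume computation; indeed, it could be read in reverse to reprove that the Tamagawa number is $1$. The paper also establishes along the way the primitive (Theorem \ref{thm:prim}) and higher-level (Theorem \ref{thm:cong}) variants, which it needs in order to pass from nice test functions to general Borel integrable $f$, and which your unfolding route would require additional work to produce.

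Two smaller points. First, the image $W$ of $g \mapsto Y_0 g$ is not the set of all $X$ that have rank $k$ at every place: if $X_\nu$ fails to be primitive (rank $< k$ modulo $\pi_\nu$) at infinitely many finite $\nu$, then any local completion of $X_\nu$ to $\GL(n, F_\nu)$ lies outside $\GL(n,\cO_\nu)$ at all such $\nu$, so $X$ cannot be completed to an element of $\GL(n,\A_F)$ at all. Thus $W$ is strictly smaller than you claim; it is still conull, but establishing this needs a Borel--Cantelli estimate on the local non-primitivity measures, roughly $N(\gp_\nu)^{-(n-k+1)}$, whose sum over $\nu$ converges precisely because $k < n$. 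The ``countably many Zariski-closed loci'' argument you offer does not by itself cover this. Second, your stratification of the no-zero-rows sum correctly restricts $D$ to row-reduced echelon forms with no zero columns; this is a precision that the theorem's bare statement (and the corresponding identity in the paper's Section 4.5) leaves implicit, and with it restored your factorization is exactly the one the paper uses.
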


We note that the case $k=1$ has been proved by Thunder \cite[Lemma 1]{Thu96}, who applies it to refine Siegel's lemma. Also see Venkatesh \cite[Theorem 1]{Ven13}, where the case $k=1$ and of level $1$ for the cyclotomic fields is proven in the classical language.


The main idea of our proof of Theorem \ref{thm:intro_main} is inspired by the original proof by Rogers of his formula \cite{Rog55}, although his argument contains a gap. Rogers argues that the left-hand side of \eqref{eq:classic} is equal to the limit of the integral of the sum
\begin{equation*}
 \sum_{x_1, \ldots, x_k \in \Z^n \atop \mathrm{indep.}} f(x_1g, \ldots, x_kg)
\end{equation*}
along a certain family of unipotent orbits in $\SL(n,\R)$, which in turn is equal to the Lebesgue measure of $f$. We now know both claims to be true by hindsight, but in proving the former he cites an incorrect statement about the fundamental domain of $X_n$ (\cite{Rog55}, p. 256, lines 2-8). We circumvent this problem by replacing the integration over unipotent orbits by the estimates on certain Hecke operators. This idea first appeared in the author's previous work \cite{Kim22}, in which it was implemented in the classical language.

By this method we obtain the so-called ``level 1'' case of the adelic Rogers formula (Theorem \ref{thm:main} below). To extend it to all levels and thereby fully prove Theorem \ref{thm:intro_main}, it turns out that we need the adelic version of the primitive analogue of Theorem \ref{thm:classic}, i.e. the statement
\begin{equation*}
\int_{X_n} \sum_{X \in (\Z^n)^k \atop \mbox{\tiny completes to $\SL(n,\Z)$}} f(Xg) d\mu(g) =\left( \prod_{j=n-k+1}^n \zeta^{-1}(j) \right)\int_{(\R^n)^k} f(X)dX,
\end{equation*}
a result due to Schmidt \cite{Sch57}. Such a statement (Theorems \ref{thm:meanhecke} and \ref{thm:prim}) is derived from the level 1 case by an inclusion-exclusion argument, which again involves the Hecke operators. The ``higher level'' version (Theorem \ref{thm:cong}) follows from this by a simple folding-unfolding trick on the sum over $(F^n)^m$, and subsequently an approximation argument yields Theorem \ref{thm:intro_main}.

It is easy to translate Theorem \ref{thm:intro_main} into the classical language, and unsurprisingly, certain recently proved variants of the Rogers formula (e.g. \cite{Han19}, \cite{AGH21}) follow immediately this way. Let us also demonstrate a quick example involving a number field other than $\Q$:
\begin{corollary} 
Let $F$ be a number field of class number $1$, and $X_n(F) = \SL(n,\cO_F) \backslash \SL(n,F \otimes \R)$, equipped with the probability measure $\mu$. Let $\lambda$ be the Lebesgue measure on $F \otimes \R$ normalized to assign covolume $1$ to the lattice obtained by the natural embedding $\cO_F \hookrightarrow F \otimes \R$. Then, for a Borel integrable $f_\infty$ on $\mathrm{Mat}_{k \times n}(F \otimes \R)$,
\begin{equation*}
\int_{X_n(F)} \sum_{X \in (\cO_F^n)^k \atop \mathrm{rows\,indep.}\, /F} f_\infty(Xg) d\mu \\
=\int_{(F\otimes\R)^{nk}} f_\infty(X) d\lambda^{nk},
\end{equation*}
and
\begin{align*}
&\int_{X_n(F)} \sum_{X \in (\cO_F^n)^k \atop \mathrm{no\, zero\, rows}} f_\infty(Xg) d\mu \\
=&\int_{(F\otimes\R)^{nk}} f_\infty(X) d\lambda^{nk} + \sum_{m=1}^{k-1} \sum_D N(D)^n \int_{(F\otimes\R)^{nm}} f_\infty(D^{\tr}X) d\lambda^{nm},
\end{align*}
where the sum over $D$ is over all $m \times k$ row-reduced echelon forms over $F$ of rank $m$, and $N(D)$ is the density of the vectors $x \in \cO_F^m$ such that $xD \in \cO_F^k$.
\end{corollary}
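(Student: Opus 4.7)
The plan is to derive the corollary from Theorem~\ref{thm:intro_main} by choosing the adelic test function to be supported on the integer points at the finite places. Given $f_\infty$ on $\mathrm{Mat}_{k\times n}(F\otimes\R)$, form
\begin{equation*}
f(X) = f_\infty(X_\infty)\cdot \mathbf{1}_{(\widehat{\cO}_F^n)^k}(X_{\mathrm{fin}}),
\end{equation*}
where $\widehat{\cO}_F = \prod_{v\nmid\infty}\cO_v$ and $X = (X_\infty,X_{\mathrm{fin}})$ under the decomposition $\A_F = (F\otimes\R)\times\A_F^{\mathrm{fin}}$, and apply Theorem~\ref{thm:intro_main} to this $f$.

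The first step is to pass from $\mathcal X_n$ down to $X_n(F)$. With $K := \GL(n,\widehat{\cO}_F)$, strong approximation for $\SL_n$ combined with the class number~$1$ hypothesis (applied via the determinant) yields $\GL(n,\A_F) = \GL(n,F)\cdot\GL(n,F\otimes\R)\cdot K$, and hence $G_n = \GL(n,F)\cdot G_\infty\cdot K$ where $G_\infty$ denotes the norm-$1$ subgroup of $\GL(n,F\otimes\R)$. Since $\mathbf{1}_{\widehat{\cO}_F^{nk}}$ is right-$K$-invariant, the adelic integral collapses to an integral over $\GL(n,\cO_F)\backslash G_\infty$. A further step, using that the archimedean norm-$1$ center $(F\otimes\R)^{(1)}$ modulo $\cO_F^\times$ is compact (Dirichlet), identifies this further with a fiber bundle whose fiber is $X_n(F) = \SL(n,\cO_F)\backslash\SL(n,F\otimes\R)$; because $|N(z)|=1$ for $z\in(F\otimes\R)^{(1)}$, the integrand effectively depends only on the fiber direction, and the base integration contributes a constant that is absorbed into the probability normalization. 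On the sum side, the cutoff $X_{\mathrm{fin}}\in(\widehat{\cO}_F^n)^k$ forces $X\in(F^n)^k\cap(\widehat{\cO}_F^n)^k = (\cO_F^n)^k$, and the row-independence / no-zero-row conditions carry over verbatim.

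The remaining step is the bookkeeping of measures. Under the standard Tamagawa normalization $\alpha_F^\infty(\cO_F\backslash(F\otimes\R)) = |d_F|^{1/2}$ and $\alpha_F^{\mathrm{fin}}(\widehat{\cO}_F) = |d_F|^{-1/2}$, one has $\lambda = |d_F|^{-1/2}\alpha_F^\infty$, which immediately gives
\begin{equation*}
\int f\,d\alpha_F^{nk} = \Bigl(\int f_\infty\,d\alpha_F^{\infty,nk}\Bigr)\,|d_F|^{-nk/2} = \int f_\infty\,d\lambda^{nk},
\end{equation*}
establishing the first identity. For each $D$-term in the second identity,
\begin{equation*}
\int f(D^{\tr}X)\,d\alpha_F^{nm} = \Bigl(\int f_\infty(D^{\tr}X_\infty)\,d\alpha_F^{\infty,nm}\Bigr)\cdot \alpha_F^{\mathrm{fin},nm}\bigl\{X : D^{\tr}X\in(\widehat{\cO}_F^n)^k\bigr\}.
\end{equation*}
The finite factor splits across the $n$ columns of $X$ as the $n$th power of the density of $c\in\A_F^{\mathrm{fin},m}$ with $D^{\tr}c\in\widehat{\cO}_F^k$, which by the defining property of $N(D)$ (the density of $x\in\cO_F^m$ with $xD\in\cO_F^k$) equals $N(D)\cdot|d_F|^{-m/2}$. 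Combined with the $\lambda$-normalization, this produces exactly $N(D)^n\int f_\infty(D^{\tr}X)\,d\lambda^{nm}$.

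The principal obstacle I anticipate is the first step: cleanly matching $\mathcal X_n$ with $X_n(F)$. The adelic space naturally fibers over the compact norm-$1$ idele class group via the determinant, whereas $X_n(F)$ is a purely $\SL$-side object; aligning them requires class number~$1$ to kill the ideal-class obstruction, strong approximation for $\SL_n$ to handle the finite-place direction, and Dirichlet's unit theorem to quotient the archimedean determinant piece. Once this identification is shown to be measure-preserving up to explicit constants that cancel in the final formula, the remaining computations are the standard unfolding of Tamagawa measures into local factors, together with the identification of the finite-place local density with the arithmetic quantity $N(D)$.
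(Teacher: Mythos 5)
Your overall strategy matches the paper's: take $f=\mathbf{1}_{\widehat{\cO}_F^{nk}}f_\infty$ and push Theorem~\ref{thm:intro_main} down to the classical picture, and your measure bookkeeping in the last step (the normalization $\lambda = |\Delta_F|^{-1/2}\alpha_\infty$, the factorization of the finite integral over the $n$ columns, and the identification of the local density with $N(D)$) is correct and more explicit than the paper's.

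However, the middle step contains a genuine error. You correctly identify that $\GL(n,F)\backslash G_n/K \cong \GL(n,\cO_F)\backslash G_\infty$, where $G_\infty=\{h\in\GL(n,F\otimes\R):\|\det h\|=1\}$, and that this fibers over $\cO_F^*\backslash(F\otimes\R)^{(1)}$ with fiber $X_n(F)=\SL(n,\cO_F)\backslash\SL(n,F\otimes\R)$. The base is a compact group of real dimension $d-1$ (nontrivial for every $F\neq\Q$, including imaginary quadratics via the $S^1$ factor). But the assertion that ``because $|N(z)|=1$ for $z\in(F\otimes\R)^{(1)}$, the integrand effectively depends only on the fiber direction'' is false. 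Writing $a_\eta=\mathrm{diag}(\eta,1,\ldots,1)$ for a base section, one has
\begin{equation*}
\phi(a_\eta g)=\sum_{X\in(\cO_F^n)^k}f_\infty(Xa_\eta g)=\sum_{Y\in(\cO_F^n a_\eta)^k}f_\infty(Yg),
\end{equation*}
and $\cO_F^n a_\eta\neq\cO_F^n$ as a sublattice of $(F\otimes\R)^n$ whenever $\eta\notin\cO_F^*$; indeed $\cO_F^n a_\eta$ and $\cO_F^n$ lie in distinct $\SL(n,F\otimes\R)$-orbits. So the Siegel transform depends on the full coset in $\GL(n,\cO_F)\backslash G_\infty$, not merely on its image in $X_n(F)$, and the norm-$1$ condition does nothing to kill this dependence.

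What is needed, and what your argument does not supply, is that the \emph{fiber integral} $I(\eta):=\int_{X_n(F)}\phi(a_\eta g)\,d\mu(g)$ is constant in $\eta$. The adelic formula only gives the average of $I(\eta)$ over the base; to pin down $I(1)$ (which is the actual left-hand side of the Corollary) one must argue orbit-by-orbit, e.g.\ by re-running the Hecke-operator argument underlying Proposition~\ref{prop:main} with a fixed archimedean twist $a_\eta$ (this is in effect what the paper is alluding to when it remarks that the proof of Theorem~\ref{thm:main} ``carries over'' to components). The paper's own one-line invocation of $\GL(n,F)\backslash G_n/K\cong X_n(F)$ glosses over this same point, but your more detailed write-up makes the incorrect justification explicit, so it should be repaired rather than inherited.
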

This follows from Theorem \ref{thm:intro_main} simply by taking $f = \mathbf{1}_{\cO_\f^{nk}}f_\infty$ and noting that
\[\GL(n,F) \backslash G_n / K \cong X_n(F),\] where $\cO_\f =  \prod_{\nu \nmid \infty} \cO_\nu$ and $K = \prod_{\nu \nmid \infty} \GL(n,\cO_\nu)$ (see e.g. \cite[Appendix A.3]{Gar90}). The class number $1$ condition can be dropped, by taking an appropriate adaptation of the proof of Theorem \ref{thm:main}; moreover, we can average over any rank $n$ torsion-free $\cO_F$-submodules of $(F\otimes\R)^n$ of a given Steinitz class, and obtain the same formula. For general $F$, $\GL(n,F) \backslash G_n / K$ has $\mathrm{Cl}(F)$ connected components, each being a moduli space of the $\cO_F$-modules of each Steinitz class; our proof of Theorem \ref{thm:main} carries over if we restrict $\mathcal X_n$ to any of these components.

Another point worth noting is that
\begin{equation*}
N(D)^n = \int_{\A_\f^{nm}} \mathbf{1}_{\cO_\f^{nk}}(D^\tr X) d\alpha_\f^{nm},
\end{equation*}
where $\A_\f$ is the finite part of $\A_F$ and $\alpha_\f$ is the Haar measure on $\A_\f$ that assigns measure $1$ to $\cO_\f$. By replacing $\mathbf{1}_{\cO_\f^{nk}}$ with other suitable function, one can obtain the congruence subgroup variant of the above corollary.

\subsection{Second moment estimates}

Even the simplest case $k=2$ of the classical Rogers integral formula alone has proved to be tremendously useful in the literature, through the second moment estimate of the lattice-point counting functions that follows from it. We present an adelic analogue of such, as an application of Theorem \ref{thm:intro_main}.

\begin{theorem} \label{thm:intro_second}
Let $n \geq 3$, and suppose $f:\A_F^n \rightarrow \R$ is a nonnegative function of the form $f_\f f_\infty$, where $f_\f$ is the characteristic function of $\prod_{\nu \nmid \infty} A_\nu$, where each $A_\nu$ is an integrable subset of $F_\nu^n$, and $f_\infty$ is a function on $\A_\infty^n$ satisfying a bound of the form, for any $\gamma \in F^*$ and a constant $C>0$,
\begin{equation} \label{eq:bdcdn}
\int_{\A_\infty^n} f_\infty(x)f_\infty(\gamma x) d\alpha_\infty^n \leq C\alpha_\infty^n(f)\min(1,\min_{\sigma \mid \infty} \|\gamma\|_\sigma^{-1})^n;
\end{equation}
here $\alpha_\infty$ is the Haar measure on $\A_\infty$ compatible with $\alpha_F$ in Theorem \ref{thm:intro_main}. Then
\begin{equation*}
\int_{\mathcal X_n} \left(\sum_{x \in F^n \backslash \{0\}} f(xg) \right)^2 d\mu = (\alpha_F^n(f))^2 + O_F(C\alpha_F^n(f)).
\end{equation*}
\end{theorem}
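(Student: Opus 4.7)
The plan is to expand the square into the sum over ordered pairs $(x, y) \in (F^n \setminus \{0\})^2$, split into $F$-linearly independent and $F$-linearly dependent pairs, and apply Theorem \ref{thm:intro_main} to each. Dependent pairs correspond uniquely to $(x, \gamma x)$ for some $\gamma \in F^*$ and $x \neq 0$.

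For the independent sum, the $k=2$ case of Theorem \ref{thm:intro_main} (applicable since $n \geq 3 > 2$) evaluates the integral to $\int_{(\A_F^n)^2} f(x) f(y) \, d\alpha_F^{2n} = (\alpha_F^n(f))^2$. For each $\gamma \in F^*$, applying the $k=1$ case to $x \mapsto f(x) f(\gamma x)$ yields $\int_{\A_F^n} f(x) f(\gamma x) \, d\alpha_F^n$. Next, factor this integral via $\A_F = \A_\f \times \A_\infty$: since $f_\f$ is a characteristic function, the finite part satisfies
$$\prod_{\nu \nmid \infty} \alpha_\nu^n(A_\nu \cap \gamma^{-1} A_\nu) \leq \alpha_\f^n(f_\f) \prod_{\nu \nmid \infty} \min(1, \|\gamma\|_\nu^{-n})$$
(using $\alpha_\nu^n(\gamma^{-1} A_\nu) = \|\gamma\|_\nu^{-n} \alpha_\nu^n(A_\nu)$), while the archimedean part is controlled by the hypothesis \eqref{eq:bdcdn}. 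Combining, the dependent-pair contribution is at most $C \alpha_F^n(f) \cdot S$, where
$$S := \sum_{\gamma \in F^*} \prod_{\nu \nmid \infty} \min(1, \|\gamma\|_\nu^{-n}) \cdot \min\bigl(1, \min_{\sigma \mid \infty} \|\gamma\|_\sigma^{-1}\bigr)^n.$$

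The main obstacle is the uniform bound $S = O_F(1)$. To carry it out, I would parametrize $\gamma$ by its fractional ideal $(\gamma) = \mathfrak{a}_+ \mathfrak{a}_-^{-1}$ in coprime form: the finite-place factor then equals $N(\mathfrak{a}_-)^{-n}$, depending only on $\mathfrak{a}_\pm$. The archimedean factor is summed over the $\cO_F^*$-orbit of a fixed generator via the logarithmic embedding; the minimum of $\max_\sigma \|\epsilon \gamma_0\|_\sigma$ over units is comparable to $N((\gamma_0))^{1/r}$ (with $r$ the number of archimedean places) by AM-GM, and the resulting geometric decay of $\min(1,\cdot)^n$ on the unit lattice produces a convergent series. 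The outer sum over $(\mathfrak{a}_+, \mathfrak{a}_-)$, split according to the dichotomy $N(\mathfrak{a}_+) \gtrless N(\mathfrak{a}_-)$ and reorganized via the product formula to pair the finite-place decay against the archimedean decay, reduces to convergent Dedekind zeta-type tails. Combining this estimate with the diagonal sum $(\alpha_F^n(f))^2$ gives the claimed bound $(\alpha_F^n(f))^2 + O_F(C\alpha_F^n(f))$.
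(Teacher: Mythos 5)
Your proposal follows essentially the same route as the paper: expand the square via the rank-$2$ and rank-$1$ row-reduced echelon forms in Theorem \ref{thm:intro_main}, factor the dependent-pair integral into finite and archimedean parts (bounding the finite part by $\alpha_\nu^n(A_\nu)\min(1,\|\gamma\|_\nu^{-n})$ at each place), parametrize $\gamma\in F^*$ by its principal fractional ideal times a unit, and control the unit sum via the geometry of the shifted unit lattice and the ideal sum via Dedekind-zeta-type bounds.

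Two points where the paper is more explicit, which you should sharpen if you flesh this out. First, the clean way to run your ``dichotomy'' is to observe that $\int f(x)f(cx)\,d\alpha_F^n$ is symmetric under $c\mapsto c^{-1}$ (change of variables plus the product formula), so one may restrict to $N(c)\le 1$, i.e. $N(\mathfrak a_+)\le N(\mathfrak a_-)$, and then the finite-place decay $N(\mathfrak a_-)^{-n}$ alone beats the count of $\mathfrak a_+$; trying instead to pair a large $N(\mathfrak a_+)$ against archimedean decay only gives $N(\mathfrak a_+)^{-n/(r_1+r_2)}$ and would fail once $r_1+r_2\ge n$. Second, pure geometric decay on the shifted unit lattice is not quite enough by itself: one needs the intermediate counting estimate (the paper's Lemma \ref{lemma:unitcount}) giving $\sum_{u\in\cO_F^*}\int f_\infty(x)f_\infty(\gamma u x)\,d\alpha_\infty^n=O_F\bigl((1+|\log|N\gamma||^r)\,C\,\alpha_\infty^n(f_\infty)\bigr)$, and then the $\log^r$ factor is absorbed by the extra power of $Nq$ coming from $n\ge 3$.
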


The reason we require a condition such as \eqref{eq:bdcdn} is that, when applying Theorem \ref{thm:intro_main} to $f$ and $k=2$, we confront an expression of the form
\begin{equation*}
\sum_{u \in \cO_F^*} \int_{\A_\infty^n} f_\infty(x)f_\infty(\gamma u x) d\alpha_\infty^n.
\end{equation*}
Since $\cO_F^*$ is in general an infinite set, in order to show this sum is not too large, we need some bound on the inner integral that approaches zero reasonably quickly as $\max_{\sigma \mid \infty} \|u\|_\sigma$ approaches infinity.

Examples of functions satisfying \eqref{eq:bdcdn} are:
\begin{itemize}
\item any function of the form $f_\infty = \prod_{\sigma \mid \infty} f_\sigma$, where each $f_\sigma: F_\sigma^n \rightarrow \R_{\geq 0}$ is bounded and integrable, with $C = \prod_{\sigma \mid \infty} \sup_{x \in F_\sigma^n} f_\sigma(x)$
\item the characteristic function of a ball or an annulus in $\A_\infty^n$ centered at origin (see Section 2.2 or 5.2 for the metric on $\A_\infty$)
\end{itemize}

For the former, \eqref{eq:bdcdn} follows quickly from the H\"older inequality; for the latter, it is proved in Lemma \ref{lemma:2termest} below. The interest in the functions of the latter kind arises from the idea of Venkatesh \cite{Ven13}, who interpreted rank-$2$ $\Z[\zeta_d]$-modules, where $\zeta_d$ is a primitive $d$-th root of unity, as lattices in $\R^{2\varphi(d)}$ with the rotational symmetry coming from the multiplication by $\zeta_d$, and then exploited this symmetry to finesse an improved lower bound on the sphere packing density in certain dimensions.

The numerous known results that rely on the $k=1,2$ cases of the Rogers integral formula --- e.g. \cite{Sch60}, \cite{AM09}, \cite{AM15}, \cite{GKY??} --- may thus be extended almost immediately to the adelic context, by simply plugging in the estimate of Theorem \ref{thm:intro_second} and repeating the argument verbatim. For instance, we have the following generalization of the famous discrepancy bound of Schmidt \cite[Theorem 1]{Sch60}.

\begin{corollary}
Let $n \geq 3$. Let $\Phi = \{S_V\}_{V > 0}$ be an increasing (ordered by set containment) family of Borel sets $S_V \in (F \otimes \R)^n$ of $\alpha_\infty^n$-measure $V$, whose characteristic functions satisfy \eqref{eq:bdcdn} with uniform $C$. Define, for a rank $n$ torsion-free module $M \subseteq (F\otimes\R)^n$ of covolume $1$,
\begin{equation*}
D(M,V) = \frac{\left|M \cap S_V\right| - V}{V}
\end{equation*}

Choose any non-decreasing function $\psi : \R_{>0} \rightarrow \R_{>0}$ such that $\int_0^\infty \psi(s)^{-1}ds$ converges. Then for almost every $M$ (in the sense of the Haar measure on $\GL(n,F) \backslash G_n / K$),
\begin{equation*}
D(M,V) = O(V^{-1/2}\log V \psi^{1/2}(\log V)).
\end{equation*}
\end{corollary}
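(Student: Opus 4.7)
My plan is to adapt Schmidt's classical metric argument, with Theorem~\ref{thm:intro_second} providing the $L^2$ control.

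For each $V>0$, set $f_V = \mathbf{1}_{\cO_\f^n}\mathbf{1}_{S_V}$; by hypothesis $f_V$ satisfies the premise of Theorem~\ref{thm:intro_second} with a constant $C$ uniform in $V$. Since $\alpha_F^n(f_V)=V$, combining Theorem~\ref{thm:intro_second} with the first moment identity (the $k=1$ case of Theorem~\ref{thm:intro_main}) yields
\[
\int_{\mathcal X_n}\bigl(N_V(g) - V\bigr)^2\,d\mu \;\ll_F\; V,
\]
where $N_V(g) = \sum_{x\in F^n\setminus\{0\}} f_V(xg)$. Since $f_V$ is right $K$-invariant and the discussion following Theorem~\ref{thm:intro_main} identifies $\GL(n,F)\backslash G_n/K$ with a disjoint union of Steinitz-class moduli spaces of torsion-free $\cO_F$-modules, after restricting to one component I can rewrite $N_V$ as $|M\cap S_V|$ for the corresponding module $M$, and it suffices to prove the almost-everywhere bound on each component separately.

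For the Borel--Cantelli step, fix $V_j = 2^j$ and $T_j = V_j^{1/2}\psi^{1/2}(\log V_j)$. Chebyshev gives
\[
\mu(|N_{V_j}-V_j|>T_j)\;\ll\;\frac{V_j}{T_j^2}\;=\;\frac{1}{\psi(\log V_j)}\;\ll\;\frac{1}{\psi(j)},
\]
which is summable by the hypothesis $\int_0^\infty \psi(s)^{-1}\,ds<\infty$ together with the monotonicity of $\psi$. Hence for almost every $M$, $|N_{V_j}(M)-V_j|\le T_j$ for all $j$ sufficiently large.

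The main obstacle, and where the extra $\log V$ factor in the advertised bound will arise, is the interpolation from the discrete scales $V_j$ to general $V\in[V_j,V_{j+1}]$. The naive sandwich $N_{V_j}\le N_V\le N_{V_{j+1}}$ furnished by monotonicity of $\{S_V\}$ leaves a gap of size $V_{j+1}-V_j\sim V_j$, which is far too crude. To close it I would subdivide each block $[V_j,V_{j+1}]$ dyadically and apply a Menshov--Rademacher / Hajek--Renyi style maximal inequality to the mean-zero sub-block increments
\[
(N_{V_{j,k+1}} - N_{V_{j,k}}) - (V_{j,k+1} - V_{j,k}),
\]
whose variances are controlled via the same second-moment bound applied to the annulus $S_{V_{j,k+1}}\setminus S_{V_{j,k}}$ (which in the examples of interest, such as balls and annuli, itself satisfies \eqref{eq:bdcdn} with a uniform constant). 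The chaining produces exactly the $\log V_j$ factor, upgrading the Chebyshev bound to
\[
\mu\!\left(\sup_{V\in[V_j,V_{j+1}]}|N_V - V|>V_j^{1/2}\log V_j\,\psi^{1/2}(\log V_j)\right)\;\ll\;\psi(\log V_j)^{-1},
\]
which remains summable. A second Borel--Cantelli, followed by dividing through by $V\asymp V_j$, then yields $D(M,V) = O(V^{-1/2}\log V\,\psi^{1/2}(\log V))$ for almost every $M$.
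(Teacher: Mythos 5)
Your plan correctly reconstructs the argument the paper has in mind: the paper gives no proof of this corollary beyond the remark that one plugs Theorem~\ref{thm:intro_second} into Schmidt's proof of \cite[Theorem~1]{Sch60} and repeats it verbatim, and your outline --- second moment via Theorem~\ref{thm:intro_second}, Chebyshev at the dyadic scales $V_j=2^j$, and a Rademacher--Menshov chaining inequality over a dyadic subdivision of each $[V_j,V_{j+1}]$ to close the gap between scales, with the extra $\log V$ coming from the depth of the tree --- is exactly that argument.

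The point you flag only parenthetically, however, is a genuine gap that needs to be named and closed, not just noted. The chaining step requires the second-moment estimate applied to the increments: you need $\int_{\mathcal X_n}(N_{V_b}-N_{V_a}-(V_b-V_a))^2\,d\mu\ll V_b-V_a$ for every sub-interval $[V_a,V_b]\subseteq[V_j,V_{j+1}]$, which in turn requires $\mathbf 1_{S_{V_b}\setminus S_{V_a}}$ to satisfy \eqref{eq:bdcdn} with a uniform constant. That does not follow formally from \eqref{eq:bdcdn} for the $\mathbf 1_{S_V}$ alone: containment only yields $\alpha_\infty^n\bigl((S_{V_b}\setminus S_{V_a})\cap\gamma^{-1}(S_{V_b}\setminus S_{V_a})\bigr)\le C\alpha_\infty^n(S_{V_b})\min(1,\min_\sigma\|\gamma\|_\sigma^{-1})^n$, with $\alpha_\infty^n(S_{V_b})$ rather than $\alpha_\infty^n(S_{V_b}\setminus S_{V_a})$ on the right, and running Lemma~\ref{lemma:2termunitest} with only that weaker input gives an increment variance of order $(V_b-V_a)\bigl(\log\tfrac{V_b}{V_b-V_a}\bigr)^r$ with $r=r_1+r_2-1$; after chaining this costs an extra $(\log V)^{r/2}$ in the final bound whenever $r>0$. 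So your appeal to ``the examples of interest'' is not a proof of the corollary as stated for a general number field and a general family: one either needs to strengthen the hypothesis to say that the differences $S_{V_2}\setminus S_{V_1}$ also satisfy \eqref{eq:bdcdn} with uniform $C$ (which is precisely what Lemma~\ref{lemma:2termest} supplies for balls and annuli), or verify that condition for the specific family at hand. With that in place, your argument is complete and matches the paper's intent.
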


\begin{remark}

To obtain the second moment estimate in the case $n=2$, one could either appeal the spectral theory of automorphic forms, as in \cite[Section 4.2]{AM09}, or prove the Rogers formula in case $n=k=2$, as in \cite[Section 8]{Sch60}. The latter, and the cases $n=k$ in general, could in fact be obtained using the method of the present paper, though the fact that $\GL(n,\A_F)$, while transitive on $(\A_F \backslash \{0\})^{n-1}$, is not transitive on $(\A_F \backslash \{0\})^n$ necessitates an additional ingredient in the argument.

\end{remark}

We also prove the following estimate for the rational points on the projective spaces, which may be seen as yet another analogue of the primitive lattice points in the classical context.

\begin{theorem} \label{thm:intro_proj}
Let $n \geq 3$. For $B > 0$, let $P_B(g)$ be the number of the rational points $x \in \mathbb P^{n-1}(F)$ such that the height of $x$ twisted by $g$ is less than or equal to $B$ (see Section 2.4, especially the paragraph below \eqref{eq:htdef}, for the definition of the twisted height). Then there exists a function $f_B:\A_F^n \rightarrow \R$ such that
\begin{equation*}
P_B(g) = \sum_{x \in F^n \backslash \{0\}} f_B(xg).
\end{equation*}

In addition, we have
\begin{equation*}
\int_{\mathcal X_n} (P_B(g))^2 d\mu = (\alpha_F^n(f_B))^2 + O_F(\alpha_F^n(f_B)),
\end{equation*}
and $\alpha_F^n(f_B) = CB^n$ for an explicit constant $C$ depending only on $F$ and $n$.
\end{theorem}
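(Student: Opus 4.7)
The plan is to take $f_B$ to be the indicator of a Borel fundamental set for the scalar $F^*$-action on $\A_F^n \setminus \{0\}$, intersected with a height sublevel set, and then derive the second moment directly from the case $k=2$ of Theorem~\ref{thm:intro_main}.

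First, I fix a Borel fundamental set $\mathcal{F} \subseteq \A_F^n \setminus \{0\}$ for the free scalar action $(c,y) \mapsto cy$ of $F^*$; such a set exists by a standard measurable-selection argument, since the countable group $F^*$ acts freely and properly on the Polish space $\A_F^n \setminus \{0\}$. Writing $H$ for the adelic height on $\A_F^n \setminus \{0\}$ underlying the twisted height of Section~2.4, which is $F^*$-invariant by the product formula and satisfies $H(xg) = H_g(x)$ for $x \in F^n \setminus \{0\}$ and $g \in G_n$, I set
\[
f_B(y) = \mathbf{1}_{\mathcal F}(y)\,\mathbf{1}_{H(y) \leq B}, \qquad f_B(0) = 0.
\]
Regrouping $\sum_{x \in F^n \setminus \{0\}} f_B(xg)$ by $F^*$-orbits: each orbit of $xg$ meets $\mathcal F$ in exactly one point and $H$ is constant on each orbit, so the sum telescopes to $\sum_{[x] \in \mathbb{P}^{n-1}(F)} \mathbf{1}_{H_g([x]) \leq B} = P_B(g)$, proving the first assertion.

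For the second moment, I apply the ``no zero rows'' formula of Theorem~\ref{thm:intro_main} with $k=2$ (allowed since $n \geq 3$) to the product function $f(Y_1,Y_2) = f_B(Y_1)f_B(Y_2)$. Since $f_B$ vanishes at $0$, the left-hand side equals $\int_{\mathcal X_n} P_B(g)^2\, d\mu$, and the right-hand side becomes
\[
(\alpha_F^n(f_B))^2 + \sum_D \int_{\A_F^n} f_B(X) f_B(dX)\, d\alpha_F^n(X),
\]
with $D$ ranging over the $1 \times 2$ rank-$1$ row-reduced echelon forms, namely $(1,d)$ with $d \in F$ and $(0,1)$. The form $(0,1)$ and the case $d = 0$ contribute nothing since $f_B(0) = 0$. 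For $D = (1,d)$ with $d \in F^*$, the factor $\mathbf{1}_{\mathcal F}(X)\,\mathbf{1}_{\mathcal F}(dX)$ forces $X$ and $dX$ to lie in $\mathcal F$ and in the same $F^*$-orbit, so freeness of the action compels $d=1$. The correction therefore collapses to the single $d=1$ term $\int \mathbf{1}_{\mathcal F}\,\mathbf{1}_{H \leq B}\, d\alpha_F^n = \alpha_F^n(f_B)$, yielding
\[
\int_{\mathcal X_n} P_B(g)^2\, d\mu = (\alpha_F^n(f_B))^2 + \alpha_F^n(f_B),
\]
which is the stated bound with implied constant $1$.

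Finally, for $\alpha_F^n(f_B) = C B^n$, pick $c_B \in \A_F^*$ with $\|c_B\| = B$. Scaling by $c_B$ maps $\{H \leq 1\}$ bijectively to $\{H \leq B\}$, commutes with the abelian $F^*$-action so it carries Borel fundamental sets to Borel fundamental sets, and multiplies $\alpha_F^n$ by $\|c_B\|^n = B^n$; hence
\[
\alpha_F^n(f_B) = B^n \cdot \alpha_F^n(c_B^{-1}\mathcal F \cap \{H \leq 1\}) = B^n \cdot \alpha_F^n(f_1),
\]
using that the $\alpha_F^n$-measure of the $F^*$-invariant set $\{H \leq 1\}$ intersected with any Borel fundamental set for $F^*$ is independent of the choice (both compute the descended Tamagawa measure of $\{H \leq 1\}/F^*$). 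Finiteness of $C := \alpha_F^n(f_1)$ and its explicit evaluation in terms of $\mathrm{Res}_{s=1}\zeta_F(s)$, the class number, the regulator, and the number of roots of unity, is a Schanuel/Peyre-type computation using compactness of $\A_F^{(1)}/F^*$ (Fujisaki). The main subtle point is the construction and Borel measurability of $\mathcal F$; after that, the second moment reduces mechanically because supporting $f_B$ on a fundamental domain kills every off-diagonal contribution to the Theorem~\ref{thm:intro_main} correction, obviating any need for the bounded-interaction hypothesis~\eqref{eq:bdcdn} used in Theorem~\ref{thm:intro_second}.
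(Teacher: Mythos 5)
Your abstract fundamental-domain strategy is attractive, and once it works it is cleaner than the paper's argument and even yields the \emph{exact} identity $\int_{\mathcal X_n} P_B^2\, d\mu = (\alpha_F^n(f_B))^2 + \alpha_F^n(f_B)$ rather than the $O_F$-bound the paper proves. (The telescoping to $P_B(g)$, the collapse of the $k=2$ correction to the single $d=1$ term via $(d-1)X=0 \Rightarrow X=0$ or $d=1$, and the $B^n$-scaling by a norm-$B$ idele are all correct.) However, the very first step --- the existence of the Borel fundamental set $\mathcal F$ --- contains a genuine gap.

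Your justification is that $F^*$ acts \emph{freely and properly} on $\A_F^n\setminus\{0\}$, so measurable selection applies. Properness is false. Already for $F=\Q$, $n=1$, take the compact set $K=\{z\in\A_\Q : z_p=0\ \text{for all finite } p,\ z_\infty\in[1/2,1]\}\subseteq\A_\Q\setminus\{0\}$; then $\{c\in\Q^*: cK\cap K\neq\emptyset\}=\Q\cap[1/2,2]$ is infinite. Worse, no Borel transversal can exist on all of $\A_F^n\setminus\{0\}$: the closed invariant subset where $x_\nu=0$ at every finite $\nu$ and at every archimedean place but one is identified with $F_{\sigma_1}^*$, on which $F^*$ acts through the dense countable subgroup $\sigma_1(F^*)$ by multiplication. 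That orbit equivalence relation is of Vitali ($E_0$) type, hence non-smooth, and smoothness is inherited by restrictions to invariant Borel sets; so the ambient relation is non-smooth and has no Borel transversal. The abstract selection argument simply is not available on $\A_F^n\setminus\{0\}$.

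The repair is to work only on the $F^*$-invariant, $\alpha_F^n$-conull Borel set $\{x : 0<H(x)<\infty\}$, which suffices for both assertions: the twisted heights $H(xg)$ are always positive and finite, and the second-moment integral ignores null sets. On this set a Borel fundamental domain \emph{does} exist --- but to produce one you must pin down the finite part by normalizing the fractional ideal $J(x_\f)$ generated by the entries of $x$ at each finite place to lie in a fixed set of ideal-class representatives (equivalently, require $a_i x_\f$ to be primitive for the unique admissible $i$), and then cut out a $\Log$-fundamental domain for $\cO_F^*$ at the archimedean places. That is precisely the explicit $f_B=\frac{1}{w_F}\sum_{i=1}^{h}\phi(a_i\,\cdot)$ of the paper, up to normalization. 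So once the existence of $\mathcal F$ is made rigorous, you recover the paper's construction; what you do add, after that substitution, is a genuinely slicker second-moment computation: the fundamental-domain property kills every off-diagonal $d\neq 1$ term outright, replacing the paper's unit-sum and ideal-sum estimates in Section 5.3 by a one-line exact cancellation. The determination of $C=\alpha_F^n(f_1)$, which you defer to a Schanuel-type computation, is the content of Proposition \ref{prop:pn-1}.
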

The proof in Section 5.3 provides the explicit formulas for both $f_B$ and $C$.

The higher moments are substantially more challenging to compute, again due to the infinitude of $\cO_F^*$: for the $k$-th moment we encounter a sum of integrals over $(k-1)$-tuples of units. We will visit this topic, along with its applications to lattice problems, in a forthcoming paper.

\subsection{Organization}

Section 2 provides a summary of most of the number-theoretic background, notations and conventions that we use throughout the paper. In Section 3, the ``level 1'' case of Theorem \ref{thm:intro_main}, Theorem \ref{thm:main}, is proved. Theorem \ref{thm:main} is then further extended in Section 4, where we prove the primitive version (Theorem \ref{thm:prim}), the higher level version (Theorem \ref{thm:cong}), and ultimately the main result Theorem \ref{thm:intro_main}. Section 5 is dedicated to the second moments, proving Theorems \ref{thm:intro_second} and \ref{thm:intro_proj} above.

\subsection*{Acknowledgment}

This work was supported by NSF grant CNS-2034176. The author thanks the referee for the numerous helpful comments and suggestions that led to a considerable improvement over the original manuscript.

\section{Preliminaries}

In this section, we clarify the basic facts and conventions that we use throughout this paper. It is by no means sufficient for an introduction or a guide to the subject matter. The reader who seeks such resource may refer to any standard text on algebraic number theory, e.g. Cassels-Fr\"ohlich \cite{CF67}, Weil \cite{Wei74}. For the Tamagawa measure, see Thunder \cite{Thu96} or Weil \cite{Wei82}. For the Hecke operators, see Shimura \cite{Shi71}, Chapter 3; it is written in the classical language, but all the results there extend to the adelic context in a straightforward manner.

\subsection{Number fields}

A number field $F$ is an algebraic field extension over $\Q$ of finite degree $d$. We denote by $\cO_F$ the ring of integers of $F$. Throughout this paper, we fix $F$ once and for all, and work over this $F$ only. For a prime ideal $\gp$ of $F$ (more precisely, of $\cO_F$), we define the norm $N(\gp) = \left|\cO_F : \gp\right|$, the index of $\gp$ as an additive subgroup of $\cO_F$. We extend $N(\cdot)$ to all fractional ideals of $F$ so that it becomes multiplicative on the group of the fractional ideals of $F$. For an element $x \in F$, we define $N(x) = N((x))$.

By a place of $F$, we mean an equivalence class of absolute value defined on $F$, where two absolute values $|\cdot|_1, |\cdot|_2$ are said to be equivalent if and only if $|x|_1 < 1 \Leftrightarrow |x|_2 < 1$. Let us write $\mathcal P_F$ for the set of all places of $F$. Each and every finite place $\nu$, or equivalently, class of nonarchimedean absolute values, corresponds to a prime ideal of $\cO_F$, say $\gp_\nu$, and vice versa. We define $\| \cdot \|_\nu: F \rightarrow \Q$ associated to $\gp_\nu$ to be $\| x \|_\nu = N(\gp_\nu)^{-\ord_\nu x}$, where $\ord_\nu x$ is the exponent of $\gp_\nu$ in the unique factorization of the principal ideal $(x)$. We will write $\f$ for the set of all finite places of $F$.

On the other hand, every infinite place of $F$, i.e. equivalence class of archimedean absolute values, arises from a field embedding $F \rightarrow \C$. There are exactly $d$ different field embeddings $\sigma_1, \ldots, \sigma_d$, $r_1$ of which have their images contained in $\R$, thus called real embeddings, and the rest of which consist of $r_2$ conjugate pairs of complex embeddings, so that $d = r_1 + 2r_2$. Following the common convention, we let $\sigma_1, \ldots, \sigma_{r_1}$ be the real embeddings, and $\sigma_{r_1+j} = \bar\sigma_{r_1+r_2+j}$ for $1 \leq j \leq r_2$ be the pairs of complex embeddings. Each embedding $\sigma$ yields an infinite place on $F$ represented by the archimedean absolute value $x \mapsto |\sigma(x)|$. Note that a conjugate pair of complex embeddings define the same place --- hence there are $r_1 + r_2$ infinite places total. As in the finite case, we refer to each infinite place by the associated embedding or the conjugate pair of embeddings of $F$. We sometimes write $\sigma \mid \infty$ to indicate that $\sigma$ is an infinite place.

Due to the tight connection between places and primes, places are sometimes referred to as primes, including the infinite ones. Also we shall use the notations for places and primes interchangeably when we find it more expedient. For instance, $\|\cdot\|_\gp$ means the absolute value associated to the place $\nu$ corresponding to $\gp$; and the letter $\nu$, while typically used to denote a place, can refer to the prime ideal associated to it.

For each place $\nu$ of $F$, let $F_\nu$ be the completion of $F$ at $\nu$. If in addition $\nu \in \f$, let $\cO_\nu$ be the ring of integers of $F_\nu$, and we also choose its uniformizer $\pi_\nu \in \cO_\nu$, so that $(\pi_\nu)$ is the unique maximal ideal of $\cO_\nu$.

There exists the canonical embedding $\rho$ of $F$ into $\R^{r_1} \times \C^{2r_2}$, defined by
\begin{equation*}
\rho(x) = (\sigma_1(x), \ldots, \sigma_d(x)).
\end{equation*}
The image $\rho(F)$ spans a vector space of dimension $d$ over $\R$, which we may identify with $F \otimes_\Q \R$. We endow an inner product on $F \otimes_\Q \R$ by simply restricting to it the standard inner product on $\R^{r_1} \times \C^{2r_2}$, so that
\begin{equation*}
\langle \rho(x), \rho(y) \rangle = \sum_i \sigma_i(x)\bar\sigma_i(y),
\end{equation*}
and
\begin{equation*}
\|\rho(x)\|^2 = \langle \rho(x), \rho(x) \rangle = \sum_i |\sigma_i(x)|^2.
\end{equation*}
For any $\Z$-basis $\{b_1, \ldots, b_n\}$ of $\cO_F$, the discriminant of $F$ is the quantity
\begin{equation*}
\Delta_F = \left(\det (\sigma_i(b_j))_{1 \leq i,j \leq n} \right)^2.
\end{equation*}
This definition is independent of the choice of the basis. It is known that $|\Delta_F|^{1/2}$ is the covolume of the lattice $\rho(\cO_F)$ in $F \otimes_\Q \R$ with respect to the metric above.


There exists also the logarithm map $\Log : F^* \rightarrow \R^{r_1+r_2}$ defined by
\begin{equation*}
\Log(x) = (\log |\sigma_1(x)|, \ldots, \log |\sigma_{r_1}(x)|, 2\log |\sigma_{r_1+1}(x)|, \ldots, 2\log |\sigma_{r_1+r_2}(x)|).
\end{equation*}
Its kernel is $\mu_F$, the set of the roots of unity in $\cO_F$; we write $|\mu_F| = w_F$. $\Log$ takes $\cO^*_F$ to a lattice in $\R^{r_1+r_2}$ of rank $r:=r_1+r_2-1$, called the unit lattice. Its covolume, with respect to the standard metric on $\R^{r_1+r_2}$, is called the regulator $R_F$.

\subsection{Adeles}

The ring of finite adeles $\A_\f$ of $F$ is the restricted direct product
\begin{equation*}
\A_\f = \prod_{\nu \in \f}\\' F_\nu
\end{equation*}
with respect to $(\cO_\nu)_{\nu \in \f}$, which is the set of all elements $x = (x_\nu)_{\nu \in \f}$ such that all but finitely many $x_\nu \in \cO_\nu$.

We also define the ring of infinite adeles $\A_\infty$ to be
\begin{equation*}
\A_\infty = \prod_{\sigma \mid \infty} F_\sigma.
\end{equation*}
We identify $\A_\infty$ with $F \otimes \R$ by their natural isomorphisms to $\R^{r_1} \times \C^{r_2}$, and assign the metric compatible with that of $F \otimes \R$ under this identification.

The adele ring $\A_F$ of $F$ is the restricted direct product of $F_\nu$ over all places of $F$, namely
\begin{equation*}
\A_F = {\prod_{\nu \in \mathcal P_F}}' F_\nu = \A_\f \times \A_\infty.
\end{equation*}
For $x$ an element of $\A_F$ (or, more generally, of $G(\A_F)$ for an algebraic group $G$, such as $\A_F^n$ or $\GL(n,\A_F)$), let us write $x_\nu$ for its coordinate at place $\nu$. Similarly, for $\mathcal P \subseteq \mathcal P_F$, let us write $x_{\mathcal P} = \prod_{\nu \in \mathcal P}' x_{\nu}$. There is the natural diagonal embedding $F \hookrightarrow \A_F$ that is the product of the embeddings $F \hookrightarrow F_\nu$ over all $\nu \in \mathcal P_F$. We identify $F$ with the image of this embedding.

$\A_F$ is equipped with the restricted product topology, whose base consists of all sets of the form $\prod_{\nu \in \mathcal P_F} O_\nu$, where each $O_\nu \subseteq F_\nu$ is open, and for all but finitely many finite places $O_\nu = \cO_\nu$. With this topology, $\A_F$ is locally compact, separable, and regular Hausdorff, and thus much of the well-known results in functional analysis apply. 
In addition, $F \subseteq \A_F$ becomes a discrete and cocompact (i.e. $\A_F/F$ is compact) subgroup under this topology.

On each $F_\nu$ we assign an ``almost-norm'' $\| \cdot \|_\nu$ as follows. If $\nu$ is a finite prime lying over a rational prime $p$, then $\|x_\nu\|_\nu$ is the absolute value associated to $\nu$, as defined in the previous section. If $\nu$ is real, then $\|x_\nu\|_\nu$ is the standard absolute value on $\R$, and if $\nu$ is complex, then $\|x_\nu\|_\nu$ is the square of the standard absolute value on $\C$. For $x \in \A_F$, we let
\begin{equation*}
\| x \|_{\A_F} = \prod_{\nu} \|x_\nu\|_\nu.
\end{equation*}
The product formula states that if $x \in F^*$, then $\|x\|_{\A_F} = 1$.

Another important fact we will use frequently is the strong approximation (\cite{Kne65}; see also \cite[Theorem 2.3]{Rap13}), which states that, for any finite set of places $S \subseteq \mathcal P_F$, and a connected absolutely almost simple algebraic group $G$ over $F$ such that $G$ is simply connected and $G_S = \prod_{\nu \in S} G(F_\nu)$ is noncompact, $G(F)$ is dense in $G(\A^S) = \prod_{\nu \not\in S}' G(F_\nu)$. In particular, this applies for $G = \mathbf A^n$ (the affine $n$-space) and $G = \SL_n$, but not for $G = \GL_n$.



\subsection{Tamagawa measure}

For each place $\nu$ of $F$, we define the measure $\alpha_\nu$ on $F_\nu$ as follows:
\begin{itemize}
\item If $\nu$ is finite, $\alpha_\nu$ is the Haar measure on $F_\nu$ normalized so that $\alpha_\nu(\mathcal{O}_\nu) = 1$.
\item If $\nu$ is real, $\alpha_\nu$ is the usual Lebesgue measure on $\R$.
\item If $\nu$ is complex, $\alpha_\nu$ is twice the usual Lebesgue measure on $\C$.
\end{itemize}
We also write
\begin{equation*}
\alpha_\f = \prod_{\nu \in \f} \alpha_\nu, \alpha_\infty = \prod_{\sigma \mid \infty} \alpha_\sigma,
\end{equation*}
for the corresponding measures on $\A_\f$ and $\A_\infty$ respectively, and let
\begin{equation*}
\alpha_F = |\Delta_F|^{-\frac{1}{2}}\alpha_\f \alpha_\infty.
\end{equation*}
This is called the {Tamagawa measure} on $\A_F$. The choice of the constant factor ensures that $\alpha_F(\A_F/F) = 1$.

On each $\GL(n, F_\nu)$, there is the invariant measure
\begin{equation*}
\omega_{n,\nu}(A) = |\det A|_\nu^{-n} \prod_{1 \leq i, j \leq n} \alpha_\nu(a_{ij}),
\end{equation*}
where $a_{ij}$ refers to the $(i,j)$-entry of $A$. On $\GL(n, \A_F)$, we have the {Tamagawa measure}
\begin{equation*}
\omega_n = \prod_{\nu \mid \infty} \omega_{n,\nu} \times \prod_{\nu \in \f} (1-\|\pi_\nu\|_\nu)^{-1} \omega_{n, \nu}.
\end{equation*}

Define
\begin{equation*}
G_n = \{A \in \GL(n, \A_F) : \|\det A\|_{\A_F} = 1 \}, \Gamma_n = \GL(n, F).
\end{equation*}
$G_n$ is equipped with a measure $\mu_n$ such that
\begin{equation*}
\omega_n = \mu_n \times \beta,
\end{equation*}
where $\beta$ is the Haar measure on $\R_{>0}$ given by $d\beta(x) = dx/x$. Moreover, $\mu_n(\Gamma_n \backslash G_n)$ is finite (\cite[Section 3]{Thu96}). We write ${\mathcal X}_n = \Gamma_n \backslash G_n$ for short, and normalize $\mu_n$ so that $\mu_n({\mathcal X}_n) = 1$.

\subsection{Height}

We follow Thunder \cite[Part I, Section 1]{Thu93} for the notion of height; see also Schmidt \cite[Section 1.1]{Sch67}. On each place $\nu$, the height of $X_\nu \in \mathrm{Mat}_{m \times n}(F_\nu)$ is defined as follows. First suppose that $m =1$, i.e. $X_\nu = (a_1, \ldots, a_n) \in F_\nu^n$. Then we let
\begin{equation*}
H_\nu(X_\nu) = \begin{cases} \max_i \| a_i \|_\nu & \mbox{if $\nu$ is finite} \\ \sqrt{a_1^2 + \ldots + a_n^2} & \mbox{if $\nu$ is real} \\ |a_1|^2 + \ldots + |a_n|^2 & \mbox{if $\nu$ is complex.} \end{cases}
\end{equation*}
This extends naturally to general $m \leq n$, where we write $x_i$ for the $i$-th row of $X_\nu$, and define
\begin{equation*}
H_\nu(X_\nu) = H_\nu(x_1 \wedge \cdots \wedge x_m).
\end{equation*}
Here recall that $x_1 \wedge \cdots \wedge x_m \in F_\nu^{\binom{n}{m}}$.

For $\nu$ infinite, it is sometimes helpful to note the following equivalent definition of $H_\nu$. For a (not necessarily square) matrix $X$ of real or complex entries, let us write $\left|\det\right| X = \sqrt{\det X\bar X^\mathrm{tr}}$. Then we have
\begin{equation*}
H_\nu(X_\nu) = \| \left|\det\right| X_\nu \|_\nu.
\end{equation*}

For $X \in \mathrm{Mat}_{m \times n}(\A_F)$ and $X_\infty \in \mathrm{Mat}_{m \times n}(\A_\infty)$, we define respectively
\begin{equation} \label{eq:htdef}
H(X) = \prod_{\nu \in \mathcal P_F} H_\nu(X_\nu),\, H_\infty(X_\infty) = \prod_{\sigma \mid \infty} H_\sigma(X_\sigma).
\end{equation}

The product formula implies that $H(X)$ is invariant under the multiplication by $F^*$. This property allows us to define the height over the projective space $\mathrm{Gr}_{n,m}(F)$ using $H$. Let us recall the definition, although we only need the $m=1$ case in this paper, for which $\mathrm{Gr}_{n,m}(F) = \mathbb P^{n-1}(F)$. For $L \in \mathrm{Gr}_{n,m}(F)$, and $X \in \mathrm{Mat}_{m \times n}(F)$ any choice of a representative of $L$, i.e. a matrix whose row vectors span $L$ over $F$, we define the height of $L$ to be $H(X)$. More generally, for $g \in \GL(n, \A_F)$, following Thunder \cite{Thu93} we define the height of $L$ twisted by $g$ to be $H(Xg)$.

Later we will need a few simple facts about $H_\infty$; we state and prove them below.

\begin{lemma} \label{lemma:height}
Let $X$ be an $m \times n$ matrix with entries in $\A_\infty$. Denote by $x_i$ the $i$-th row vector of $X$. Then
\begin{equation*}
H_\infty(X) \leq \prod_{i=1}^m H_\infty(x_i).
\end{equation*}
Also, for $n' < n$, if $Y$ is an $m \times n'$ matrix consisting of any choice of the $n'$ columns of $X$, then
\begin{equation*}
H_\infty(Y) \leq H_\infty(X).
\end{equation*}
\end{lemma}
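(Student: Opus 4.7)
The plan is to reduce both inequalities to place-by-place statements, using that $H_\infty(X) = \prod_{\sigma \mid \infty} H_\sigma(X_\sigma)$, and then prove each local version using the Gram-determinant formula $H_\sigma(X_\sigma) = \bigl\| \sqrt{\det X_\sigma \bar X_\sigma^{\tr}} \bigr\|_\sigma$ recorded just above the lemma. The key thing to keep straight is the convention that at a complex place $\|\cdot\|_\sigma$ is the \emph{square} of the usual absolute value on $\C$, so that $H_\sigma(X_\sigma) = \det(X_\sigma \bar X_\sigma^{\tr})$ at a complex place, whereas at a real place $H_\sigma(X_\sigma) = \sqrt{\det(X_\sigma X_\sigma^{\tr})}$.

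For the first inequality, since both sides of the claimed bound factor as products over $\sigma \mid \infty$ with non-negative factors, it suffices to establish $H_\sigma(X_\sigma) \leq \prod_{i=1}^m H_\sigma(x_{i,\sigma})$ for each $\sigma$. At a real place this is precisely Hadamard's inequality for the determinant of a real Gram matrix: the $m$-volume of the parallelepiped spanned by the row vectors is bounded by the product of their Euclidean lengths. At a complex place, Hadamard's inequality for the Gram matrix $X_\sigma \bar X_\sigma^{\tr}$ of a Hermitian inner product gives $\det(X_\sigma \bar X_\sigma^{\tr}) \leq \prod_i \langle x_{i,\sigma}, x_{i,\sigma}\rangle$, and the right-hand side is exactly $\prod_i H_\sigma(x_{i,\sigma})$ under the squared-absolute-value convention.

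For the second inequality, I would again reduce to a place-wise statement and then apply the Cauchy--Binet formula. At each infinite place,
\begin{equation*}
\det(X_\sigma \bar X_\sigma^{\tr}) \;=\; \sum_{\substack{S \subseteq \{1,\ldots,n\} \\ |S| = m}} \bigl|\det X_\sigma[\,\cdot\,, S]\bigr|^2,
\end{equation*}
and the analogous expansion for $Y_\sigma \bar Y_\sigma^{\tr}$ is the partial sum over those $S$ contained in the chosen set of $n'$ columns. All summands being non-negative, $\det(Y_\sigma \bar Y_\sigma^{\tr}) \leq \det(X_\sigma \bar X_\sigma^{\tr})$, and taking the appropriate square root (real case) or identity (complex case) and multiplying over $\sigma \mid \infty$ yields the claim. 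The degenerate situation $n' < m$ poses no problem, since then $H_\infty(Y) = 0$.

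Neither step is technically deep — both are classical linear-algebra facts — so there is no real obstacle; the only pitfall is bookkeeping of the square-of-absolute-value normalization at complex places, which must be tracked carefully so that Hadamard and Cauchy--Binet are applied to the correct power of the volume.
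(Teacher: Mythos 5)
Your proposal is correct and follows essentially the same route as the paper's proof: reduce to a place-by-place statement, then invoke Hadamard's inequality for the first bound and Cauchy--Binet for the second. The only cosmetic difference is that you track the real/complex $\|\cdot\|_\sigma$ normalization explicitly and name Hadamard, while the paper phrases the first inequality via $|\det| X = \|x_1 \wedge \cdots \wedge x_m\|$ and the orthogonality maximization argument; these are the same fact.
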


\begin{proof}
It suffices to prove the corresponding inequalities in case $X$ is a complex-valued matrix, namely
\begin{equation*}
\left|\det\right| X \leq \prod_{i=1}^m \left|\det\right| x_i,\mbox{and } \left|\det\right| Y \leq \left|\det\right| X.
\end{equation*}
The former follows from the well-known inequality
\begin{equation*}
\left|\det\right| X \leq \prod_{i=1}^m \|x_i\|
\end{equation*}
which can be seen by observing that $\left|\det\right| X = \|x_1 \wedge \ldots \wedge x_m\|$, which is maximized for fixed $\|x_1\|, \ldots, \|x_m\|$ when the $x_i$'s are pairwise orthogonal. The latter is an immediate consequence of the Cauchy-Binet formula.
\end{proof}

Given a fractional ideal $I$ of $F$, we can associate with it the lattice $\rho(I)$ in the Euclidean space $F \otimes_\Q \R$. Denote by $\Delta_F$ the discriminant of $F$. It is known that (\cite[Theorem 1]{Sch67}, also see \cite[Theorem 1]{Thu93})
\begin{equation} \label{eq:det}
\det \rho(I) = |\Delta_F|^{\frac{1}{2}}N_F(I).
\end{equation}
In general, given an $\cO_F$-submodule $M$ of $F^n$, we can define $\rho(M) \subseteq (F \otimes_\Q \R)^n$ by naturally extending $\rho$ to $F^n$.

\begin{lemma} \label{lemma:ideal_min}
For a lattice $L$ in a Euclidean space, let $\lambda_1(L)$ be the length of a shortest nonzero vector of $L$. Then for any fractional ideal $I$ of $F$
\begin{equation*}
\sqrt{d}N(I)^{\frac{1}{d}} \leq \lambda_1(\rho(I)).
\end{equation*}

Similarly, take nonzero $x \in F^n$, and consider the rank one $\cO_F$-submodule $I \cdot x \subseteq F^n$. Then
\begin{equation*}
\sqrt{d}N(I)^{\frac{1}{d}}H_\infty(x)^{\frac{1}{d}} \leq \lambda_1(\rho(I \cdot x)).
\end{equation*}

\end{lemma}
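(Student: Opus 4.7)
I would prove the second inequality first and recover the first as the specialization $n=1$, $x=1$ (noting that $H_\infty(1)=\prod_{\sigma\mid\infty}H_\sigma(1)=1$ and $I\cdot 1=I$).

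Any nonzero element of $I\cdot x$ has the form $y=bx$ with $b\in I\setminus\{0\}$. Expanding coordinatewise under $\rho$,
\[
\|\rho(bx)\|^2 \;=\; \sum_{j=1}^n\sum_{i=1}^d |\sigma_i(b)|^2\,|\sigma_i(x_j)|^2 \;=\; \sum_{i=1}^d |\sigma_i(b)|^2\, h_i,
\]
where $h_i := \sum_{j=1}^n |\sigma_i(x_j)|^2$. The central point is to check the identity
\[
\prod_{i=1}^d h_i \;=\; H_\infty(x)^2.
\]
For each real embedding $\sigma_i$, the definition of $H_\sigma$ gives $h_i = H_{\sigma_i}(x)^2$. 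For each complex place $\sigma$, the two conjugate embeddings $\sigma_i,\bar\sigma_i$ satisfy $|\sigma_i(x_j)|=|\bar\sigma_i(x_j)|$, and each contributes a factor of $\sum_j |\sigma(x_j)|^2 = H_\sigma(x)$, so together they contribute $H_\sigma(x)^2$. Multiplying over all embeddings reconstructs $H_\infty(x)^2$.

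With this identity in hand, AM-GM applied to the $d$ positive quantities $|\sigma_i(b)|^2 h_i$ yields
\[
\|\rho(bx)\|^2 \;\geq\; d\left(\prod_{i=1}^d |\sigma_i(b)|^2 h_i\right)^{1/d} \;=\; d\,|N_{F/\Q}(b)|^{2/d}\,H_\infty(x)^{2/d},
\]
using $\prod_i|\sigma_i(b)|^2=|N(b)|^2$. The final step is an ideal-theoretic observation: since $b\in I$, the principal ideal $(b)$ is contained in $I$, so $(b)I^{-1}$ is an integral ideal of $\cO_F$ and hence has norm a positive integer. This gives $|N(b)| = N((b)) \geq N(I)$, and taking square roots produces $\|\rho(bx)\|\geq \sqrt d\,N(I)^{1/d}H_\infty(x)^{1/d}$, valid for every nonzero $y=bx\in I\cdot x$. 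The main obstacle is really only the bookkeeping in the identity $\prod h_i = H_\infty(x)^2$, which is subtle because real and complex places contribute to the height in different normalizations, while the inner-product norm $\|\rho(\cdot)\|$ treats each embedding symmetrically; once this is set correctly the AM-GM and ideal-norm steps are routine.
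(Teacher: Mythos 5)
Your proof is correct and follows essentially the same route as the paper's: expand $\|\rho(\cdot)\|^2$ over the $d$ embeddings, apply AM--GM, and invoke $(b)\subseteq I\Rightarrow N((b))\geq N(I)$. The paper proves the first inequality and dismisses the second with ``follows similarly''; your write-up does the useful extra work of proving the second directly (and deriving the first as the case $n=1$, $x=1$), and in particular verifies the identity $\prod_{i=1}^d h_i = H_\infty(x)^2$, which is exactly the normalization bookkeeping between the real and complex places that the paper's ``similarly'' is quietly relying on.
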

This lemma is well-known, and there is also a ``reverse inequality''
\begin{equation*}
\lambda_1(\rho(I)) \leq \sqrt{d\Delta_F^\frac{1}{d}}N(I)^{\frac{1}{d}};
\end{equation*}
see e.g. \cite[Lemma 2.9]{LPR10}. 
\begin{proof}
Take any nonzero $a \in I$. Then
\begin{equation*}
\|\rho(a)\|^2 = \sum_i |\sigma_i(a)|^2 \geq d\left(\prod_i |\sigma_i(a)|^2\right)^{\frac{1}{d}} \geq dN(I)^\frac{2}{d}.
\end{equation*}
The middle inequality is the arithmetic-geometric mean inequality; the one on the right follows from the fact that $J \subseteq I \Rightarrow N(J) \geq N(I)$ for fractional ideals $I,J$. The second inequality follows similarly.
\end{proof}

The following is in a sense a generalization of \eqref{eq:det}.

\begin{lemma} \label{lemma:index}
Let $S \subseteq \A_\f^n$ be an integrable set. Let $M$ be the set of all elements $x \in F^n$ such that $x_\f \in S$. Then ratio of the natural density of $\rho(M)$, that is,
\begin{equation*}
\lim_{V \rightarrow \infty} \frac{\mathrm{vol}(\rho(M) \cap B_V)}{\mathrm{vol}(B_V)}
\end{equation*}
where $B_V \subseteq (F \otimes \R)^n$ is the ball of volume $V$, to that of $\rho(\cO_F^n)$ is given by $\alpha_\f^n(S)$.
\end{lemma}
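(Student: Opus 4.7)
The plan is to reduce to the case of ``basic'' sets (translates of products of local ideals) via an approximation argument, handle the basic case using equation \eqref{eq:det} together with strong approximation, and then conclude by monotonicity and sandwiching.

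Call a set $S \subseteq \A_\f^n$ \emph{basic} if $S = a + \prod_{\nu \in \f} \pi_\nu^{k_\nu} \cO_\nu^n$ for some $a \in \A_\f^n$ and integers $k_\nu \in \Z$ with $k_\nu = 0$ for all but finitely many $\nu$. By regularity of $\alpha_\f^n$ and the fact that the topology on $\A_\f^n$ has a base consisting of (restricted) products of $\pi_\nu^{k_\nu} \cO_\nu^n$-cosets, every integrable $S$ can be sandwiched between finite disjoint unions of basic sets $B_- \subseteq S \subseteq B_+$ with $\alpha_\f^n(B_+ \setminus B_-) < \epsilon$, for any prescribed $\epsilon > 0$.

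For a single basic set $S$, first invoke strong approximation for $\mathbf A^n$ (cited in Section 2.2) to produce $b \in F^n$ with $b_\f \in S$; this is possible since $F^n$ is dense in $\A_\f^n$ and $S$ is open. The preimage $M$ then equals $b + I^n$, where $I = \prod_\nu \pi_\nu^{k_\nu}$ is a fractional ideal. Translation does not affect the natural density, and by \eqref{eq:det} the covolume of $\rho(I)^n$ inside $(F\otimes\R)^n$ equals $(|\Delta_F|^{1/2}N(I))^n$, so the density of $\rho(M)$ relative to that of $\rho(\cO_F^n)$ equals $N(I)^{-n}$. On the other hand, $\alpha_\nu(\pi_\nu^{k_\nu}\cO_\nu) = N(\gp_\nu)^{-k_\nu}$ by the very definition of $\alpha_\nu$, so
\[
\alpha_\f^n(S) = \prod_{\nu \in \f} N(\gp_\nu)^{-nk_\nu} = N(I)^{-n},
\]
matching. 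For a finite disjoint union $S = \bigsqcup S_i$ of basic sets, the corresponding $M = \bigsqcup M_i$ is automatically disjoint (an element $x \in F^n$ determines $x_\f$ uniquely), so both sides are additive and the formula extends.

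For general integrable $S$, apply the sandwich $B_- \subseteq S \subseteq B_+$ above. Then $M_{B_-} \subseteq M \subseteq M_{B_+}$, and the upper and lower densities satisfy
\[
\alpha_\f^n(B_-) \;\le\; \underline{d}(\rho(M))/d(\rho(\cO_F^n)) \;\le\; \overline{d}(\rho(M))/d(\rho(\cO_F^n)) \;\le\; \alpha_\f^n(B_+),
\]
by the basic case applied to $B_\pm$ and monotonicity of the upper/lower densities with respect to inclusion. Since $\alpha_\f^n(B_+) - \alpha_\f^n(B_-) < \epsilon$, letting $\epsilon \to 0$ yields existence of the density and the claimed equality. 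The main technical point to verify is the inner/outer approximation of arbitrary Borel integrable sets by finite unions of basic sets in $\alpha_\f^n$-measure; this is the usual regularity argument in the restricted product topology, and is the only place where one has to be careful about the almost-everywhere condition $k_\nu = 0$ defining the restricted product.
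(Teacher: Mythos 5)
Your proof is correct and takes essentially the same route as the paper: reduce to translates of product sets $\prod_\nu \pi_\nu^{k_\nu}\cO_\nu^n$, compute the density there via the covolume formula \eqref{eq:det}, and pass to general integrable $S$ by approximation. The paper's own proof is terser (it takes the translation vector directly in $(F\cap\A_\f)^n$ and asserts the approximation step in one sentence), whereas you allow an arbitrary translation $a\in\A_\f^n$, recover a rational representative via density of $F^n$ in $\A_\f^n$, permit fractional ideals (negative $k_\nu$), and spell out the inner/outer sandwich with upper and lower densities; these are exactly the details the paper leaves implicit, so the two arguments are the same in substance.
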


As a corollary, if $\rho(F^n \cap S)$ forms a lattice in $(F \otimes \R)^n$, then its determinant(covolume) is given by $|\Delta_F|^{n/2}(\alpha_\f^n(S))^{-1}$.

\begin{proof}
Choose an ideal $I = \prod_{\nu \in \f} \gp_\nu^{k_\nu} \subseteq \cO_F$, and let $S_0 = \prod_{\nu \in \f} \pi_\nu^{k_\nu}\cO_\nu$ be the associated neighborhood of the identity of $\A_\f$. For the lemma, it suffices to consider the case $S = v + S_0^n$ for some $v \in (F \cap \A_\f)^n$, since $S$ in general can be approximated arbitrarily well by taking unions and/or complements of finitely many sets of such form. But in this case, the lemma is obvious because $|\cO_F : I|^{-n} = \alpha_\f^n(S_0) = \alpha_\f^n(S)$.
\end{proof}

\subsection{Hecke operators}

This paper employs two distinct families of the Hecke operators, each for different purposes. To describe the first family, let $\phi: \mathcal X_n \rightarrow \C$ be a measurable function. Choose a prime $\gp$ of $F$, and write $a_\gp = \diag(\pi_p,1, \ldots, 1), K_\gp = \GL(n,\cO_\gp)$. Also for $r \in \R$ write
\begin{equation*}
r_\infty = (\underbrace{1, \ldots, 1}_{\mbox{\tiny finite places}}, \underbrace{r, \ldots, r}_{\mbox{\tiny infinite places}}) \in \A_F.
\end{equation*}
The Hecke operator $T_\gp$ is defined as the integral
\begin{equation*}
T_\gp \phi (g) = \frac{1}{\omega_\gp(K_\gp a_\gp K_\gp)} \int_{K_\gp a_\gp K_\gp} \phi(N\gp_\infty^\frac{1}{nd}gh) d\omega_\gp(h).
\end{equation*}
Here, inside the argument of $\phi$, $h$ should be understood as an element of $\GL(n,\A_F)$ whose projection to $\GL(n,F_\gp)$ is $h$ and the rest are the identity. It is clear that $T_\gp \phi$ is also a measurable function on $\mathcal X_n$. 

It is sometimes convenient to realize $T_\gp$ as a sum rather than an integral. To this end, let us choose a set of coset representatives $h \in \GL(n,F_\gp)$ of $K_\gp a_\gp K_\gp$, so that
\begin{equation*}
K_\gp a_\gp K_\gp = \amalg_h h K_\gp.
\end{equation*}
Let $\mathcal R_\gp \subseteq \cO_\gp$ be a set of the coset representatives of $\cO_\gp  / \gp\cO_\gp$. By the theory of the Hecke operators (see e.g. \cite[Chapter 3]{Shi71}; Shimura develops the theory over $\Z$, but his argument applies to any PID, the main tool being the theory of the Smith normal form), we can choose the set of the representatives to be
\begin{equation*}
\mathcal L =
\left\{
h(j; a_{1,\gp}, \ldots, a_{n-j-1,\gp}) : {0 \leq j \leq n-1, a_{i,\gp} \in \mathcal R_\gp} \right\}.
\end{equation*}
where
\begin{equation*}
h(j; a_{1,\gp}, \ldots, a_{n-j-1,\gp}) =
\begin{pmatrix}
\mathrm{Id}_j &              &          &           & \\
                       & \pi_\gp  &  a_{1,\gp}  & \ldots & a_{n-j-1, \gp}            \\
                       &              & 1       &           &  \\
                       &              &          & \ddots & \\
                       &              &          &            & 1
\end{pmatrix}
\end{equation*}
The cardinality of $\mathcal L$ is $1 + N\gp + \ldots + N\gp^{n-1}$, which is also equal to $\omega_\gp(K_\gp a_\gp K_\gp)$. Now one may write
\begin{equation*}
T_\gp \phi (g) = \frac{1}{\omega_\gp(K_\gp a_\gp K_\gp)} \sum_{h \in \mathcal L} \phi(N\gp_\infty^\frac{1}{nd}gh).
\end{equation*}

The second family of the Hecke operators is denoted by the letter $\mathcal T$, and is more of a combinatorial device than an operator on the space of automorphic forms. This time, choose $m < n$, and write $K_\nu = \GL(m,\F_\nu)$ for $\nu \in \f$.
For a sequence $\nu^{a_1} \supseteq \ldots \supseteq \nu^{a_m}$ of nonzero ideals in $\cO_\nu$ and a measurable function $f$ on $\mathrm{Mat}_{m \times n}(\A_F)$ invariant under $K_\nu$ from the left, define the operator $\mathcal T(\nu^{a_1}, \ldots, \nu^{a_m})$ by
\begin{equation*}
\mathcal T(\nu^{a_1}, \ldots, \nu^{a_m})f(X) = \int_{K_\nu\,\mathrm{diag}(\pi_\nu^{a_1}, \ldots, \pi_\nu^{a_m}) K_\nu} f(\gamma^{-1} X) d\omega_\nu(\gamma).
\end{equation*}
Clearly the output is also a measurable function on $K_\nu \backslash \mathrm{Mat}_{m \times n}(\A_F)$.
There is also the invariant called degree, defined by
\begin{equation*}
\deg \mathcal T(\nu^{a_1}, \ldots, \nu^{a_m}) = \mbox{($\#$ of cosets of $K_\nu$ in $K_\nu\,\mathrm{diag}(\pi_\nu^{a_1}, \ldots, \pi_\nu^{a_m}) K_\nu$)}.
\end{equation*}
The operators of the form $\mathcal T(\nu^{a_1}, \ldots, \nu^{a_m})$ together with $\R$ (which acts by the scalar multiplication), under the usual addition and the composition operations, generate a commutative ring with $1$ called the Hecke ring. The map $\deg$ extends to a homomorphism from the Hecke ring to $\Z$.

In general, for a sequence $I_1 \supseteq \ldots \supseteq I_m$ of nonzero ideals in $\cO_F$, define
\begin{equation*}
\mathcal T(I_1, \ldots, I_m)f(X) = \left(\prod_{\nu \in \f} \mathcal T(\nu^{\mathrm{ord}_\nu(I_1)}, \ldots, \nu^{\mathrm{ord}_\nu(I_m)})  \right)f(X),
\end{equation*}
and also, for an ideal $I \neq 0$ of $\cO_F$, define
\begin{equation*}
\mathcal T(I)f(X) = \sum_{I_1 \supseteq \ldots \supseteq I_m \atop I_1 \cdots I_m = I} \mathcal T(I_1, \ldots, I_m)f(X).
\end{equation*}
Both are elements of the Hecke ring.

\subsection{Miscellaneous}
Throughout this paper, we adopt the following notation. We identify an element $(x_1, \ldots, x_k) \in (\A_F^n)^k$ with a $k \times n$ matrix
\begin{equation*}
X :=
\begin{pmatrix}
x_{11} & \ldots & x_{1n} \\
  & \vdots  &  \\
x_{k1} & \ldots & x_{kn} 
\end{pmatrix} \in \mathrm{Mat}_{k \times n}(\A_F)
\end{equation*}
whose $i$-th row is equal to $x_i$. When we say $X$ is linearly independent or etc., we mean the row vectors of $X$ has those properties. In particular, for $f: (\A_F^n)^k \rightarrow \R$ and $g \in \GL(n,\A_F)$, we write $f(Xg)$ for $f(x_1g, \ldots, x_kg)$.

We omit the subscripts when there exists no ambiguity as to what they should be, e.g. $d\mu = d\mu_n, d\omega_\nu = d\omega_{n,\nu}$, and so on. Also, if $f$ is a function defined on $G(\A_F)$ for some algebraic group $G$ over $F$ and $x_S \in G_S$ for some finite set $S \subseteq \mathcal P_F$, $f(x_S)$ is to be understood as $f$ evaluated at $x_\nu$ for $\nu \in S$ and at $\mathrm{Id}_{G(F_\nu)}$ for $\nu \not \in S$. We extend this convention to other similar situations, e.g. if $x \in G(F)$ then $f(x)$ is $f$ evaluated at the diagonal embedding of $x$ to $G(\A_F)$.

\section{The ``level 1'' case}

\subsection{A reduction, and a word about the proof}

Let $\mathcal{S}$ be the set of all functions $f: \A_F^n \rightarrow \R$ such that $f = f_\infty  f_\f = f_\infty \cdot \prod_{\nu \in \f}f_\nu$, where $f_\infty$ is a Riemann integrable function on $(F \otimes \R)^n$ that is bounded and compactly supported, and for $\nu$ finite $f_\nu$ is the characteristic function of $\pi_\nu^{e_\nu} \cO_\nu^n$ for some $e_\nu \in \Z$, all but finitely many of which are $0$. Accordingly, let $\mathcal{S}^k$ be the set of all functions $f: (\A_F^n)^k \rightarrow \R$ such that $f(x_1, \ldots, x_k) = \prod_{i=1}^k f^{(i)}(x_i)$ for $f^{(i)} \in \mathcal{S}$. Thus $f^{(i)}_\nu$ is the characteristic function of $\prod_{\nu \in \f} \pi_\nu^{e_\nu^{(i)}} \cO_\nu^n$ for some $e_\nu^{(i)} \in \Z$, all but finitely many of which are $0$. Write $I^{(i)} = \prod_{\nu \in \f} \gp_\nu^{e_\nu^{(i)}}$ for the ideal corresponding to the support of $f_\f^{(i)}$.

The goal of this section is to prove

\begin{theorem} \label{thm:main}
Let $1 \leq m \leq k < n$ be integers. Let $D$ be an $m \times k$ row-reduced echelon form over $F$ of rank $m$, and $f \in \mathcal S^k$. Then
\begin{equation} \label{eq:main}
\int_{\mathcal X_n} \sum_{X \in (F^n)^m \atop \mathrm{indep.}} f(D^\tr Xg) d\mu_n(g) = \int_{(\A_F^n)^m} f(D^\tr X) d\alpha_F^{nm}(X).
\end{equation}
\end{theorem}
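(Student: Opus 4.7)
The first step is to unfold the sum. Since $\GL(n,F)$ acts transitively on linearly independent $m$-tuples in $(F^n)^m$ (as $m < n$), we fix the base tuple $X_0 = (I_m \mid 0_{m \times (n-m)})$ and write each independent $X$ uniquely as $X_0\gamma$ for $\gamma \in H \backslash \Gamma_n$, where $H$ is the stabilizer of $X_0$ in $\Gamma_n$, consisting of $F$-rational matrices of the block form $\bmat I_m & 0 \\ \ast & \ast \emat$. Standard unfolding then gives
\[
\int_{\mathcal{X}_n} \sum_{X \text{ indep}} f(D^\tr X g) \, d\mu_n(g) \;=\; \int_{H \backslash G_n} f(D^\tr X_0 g) \, d\mu_n(g).
\]
The integrand on the right depends only on the first $m$ rows of $g$, which is precisely $X_0 g \in (\A_F^n)^m$. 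Hence the identity reduces to showing that the map $g \mapsto X_0 g$ pushes the quotient measure on $H \backslash G_n$ forward to the Tamagawa measure $\alpha_F^{nm}$ on $(\A_F^n)^m$ --- a measure-theoretic assertion independent of both $f$ and $D$.

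To establish this measure identity --- which is where Rogers' original approach via unipotent orbits hits fundamental-domain difficulties --- we invoke Hecke operators in the spirit of \cite{Kim22}. Writing $F_f(g) = \sum_{X \text{ indep}} f(D^\tr X g)$, the Hecke operator $T_\gp$ of Section 2.5 preserves $\mu_n$-integration, so $\int F_f \, d\mu = \int T_\gp F_f \, d\mu$. A direct manipulation of the defining integral of $T_\gp$ moves the averaging inside the sum, onto the argument of $f$, yielding $T_\gp F_f = F_{f_\gp}$ where $f_\gp$ is an explicit Hecke transform of $f$. Iterating over an increasing family of primes spreads $f$ out adelically; in the limit, the pointwise sum $F_{f_\gp}(g)$ should approach the constant function equal to $\int f(D^\tr X) \, d\alpha_F^{nm}(X)$, at which point the preserved integral identity pins the constant of proportionality between the two sides of \eqref{eq:main} down to $1$.

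The main obstacle is making the iterated Hecke spreading argument fully rigorous: one needs quantitative control on how rapidly $T_{\gp_1} \cdots T_{\gp_r} f$ approaches its adelic mean, plus a dominated-convergence-type justification allowing this limit to commute with both the sum over $X$ and the integration over $G_n$. This is precisely the role played by the ``estimates on certain Hecke operators'' alluded to in the introduction. A subsidiary but nontrivial matter is handling the general row-reduced echelon $D$: I expect this reduces via a change of variables to the case $D = (I_m \mid 0)$, with some care needed because the change of variables interacts nontrivially with the integrality structure of $f \in \mathcal S^k$ at the finite places.
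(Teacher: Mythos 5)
Your high-level frame is right: the Hecke invariance identity $\int \phi\,d\mu = \int T_\gp\phi\,d\mu$ replaces Rogers' problematic unipotent averaging, and some limit/convergence theorem then pins the constant. But two things separate your sketch from a proof, and one of them is a genuine missing idea.

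First, the unfolding in your opening paragraph is orthogonal to your own argument. You reduce to a measure pushforward statement on $H\backslash G_n$, but the Hecke-operator argument in your second paragraph is applied to the \emph{un}-unfolded sum $F_f(g)=\sum_X f(D^{\tr}Xg)$; you never return to the unfolded form. The paper never unfolds at all, and indeed one cannot unfold before knowing the sum is integrable, which is part of what is being proven (this is exactly why the paper's Lemma~\ref{lemma:equiv} is formulated so carefully, using the truncation $[\phi]_h$, the inequality $T_\gp[\phi]_h\le[T_\gp\phi]_h$, and Fatou, rather than plain dominated convergence). Dropping the unfolding costs you nothing and removes a circularity.

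Second — and this is the real gap — you do not identify the mechanism by which $T_\gp F_f(g)$ converges pointwise to $\int f(D^{\tr}X)\,d\alpha_F^{nm}$. Your proposal suggests \emph{iterating} Hecke operators $T_{\gp_1}\cdots T_{\gp_r}$ to ``spread $f$ out adelically,'' treating this as a black box. The paper does not iterate: it applies a \emph{single} $T_\gp$ and lets $N\gp\to\infty$. The pointwise limit is not an ergodic spreading statement at all, but an elementary lattice-point count. Explicitly writing $T_\gp$ as a sum over the coset representatives $\mathcal L$ of $K_\gp a_\gp K_\gp/K_\gp$, matching each representative with an element of $\GL(n,F)$ (Lemma~\ref{lemma:ltg}), and invoking the ``equidistribution mod $\gp$'' lemma (Lemma~\ref{lemma:crux}: if the rows of $\bar X$ are independent mod $\gp$, each residue class in $\prod_i I^{(i)}_{\eta^{-1}}/\gp I^{(i)}_{\eta^{-1}}$ is hit exactly $N\gp^{n-1-m}$ times) turns $T_\gp F_f(g)$ into a Riemann sum $\sum_{X\in N\gp^{-1/nd}L}\det(N\gp^{-1/nd}L)\,f_\infty(\cdots)$ over a scaled lattice $L$, which converges to the integral by Proposition~\ref{prop:riemann}. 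This equidistribution-plus-Riemann-sum mechanism is the crux of the proof and it is absent from your sketch. Relatedly, your identity $T_\gp F_f = F_{f_\gp}$ is not true as stated: the Hecke average lands on $D^{\tr}Xgh$ and you must re-index the sum over $X$ by $X\mapsto X\gamma(h)$, which is precisely what forces the coset-matching lemma and the inverse-transpose substitution $g\mapsto g^*$ that the paper introduces in comment (ii).

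Finally, a minor point: you do not need the ``increasing family of primes'' to be arbitrary — the paper chooses primes $\gp$ that are principal and coprime to the finite supports $I^{(i)}$ (Tchebotarev supplies infinitely many), which is needed to make the coset-matching and the equidistribution lemma work over $\cO_F$.
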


One might reasonably name this the level $1$ subcase of Theorem \ref{thm:intro_main}, since for $f \in \mathcal S^k$ the sum inside the left-hand side of \eqref{eq:main} is invariant under the right action of $\GL(n,\cO_\nu)$ for all $\nu \in \f$.

A few remarks are in order before we proceed:
\begin{enumerate}[(i)]
\item Without loss of generality, by reordering $f^{(i)}$'s and the columns of $D$ if necessary, we may assume that the pivots of $D$ are its first $m$ columns, i.e. $D$ is of the form
\begin{equation*}
D = \begin{pmatrix}
1 &            &     &  *  & \cdots & * \\
   & \ddots  &    & \vdots &   & \vdots \\
   &             & 1 &  *  & \cdots & *
\end{pmatrix}.
\end{equation*}

\item There is one small but important trick that seems to facilitate the computations to come later. For $g \in G_n$ denote by $g^*$ its inverse transpose. The map $g \mapsto g^*$ then induces a degree two automorphism on ${\mathcal X}_n$, since it is compatible with the action of $\Gamma_n$. In particular, it holds that
\begin{equation*}
\int_{{\mathcal X}_n} \sum_{X \in (F^n)^m \atop \mathrm{indep.}} f(D^\tr Xg) d\mu_n(g) = \int_{{\mathcal X}_n} \sum_{X \in (F^n)^m \atop \mathrm{indep.}} f(D^\tr Xg^*) d\mu_n(g),
\end{equation*}
and in the proof of Theorem \ref{thm:main} to follow, we work with the right-hand side.

\end{enumerate}

The bulk of our effort for proving Theorem \ref{thm:main} is devoted to the following asymptotic, which one may take as a discrete analogue of \cite[Theorem 2]{Rog55}.

\begin{proposition} \label{prop:main}
There exists a sequence $\{\gp_i\}_{i \in \Z_{>0}}$ of primes of $F$ with $\lim_{i \rightarrow \infty} N\gp_i = \infty$ such that, for each $g \in G_n$, 
\begin{equation} \label{eq:inprop}
T_{\gp_i}\left(\sum_{X \in (F^n)^m \atop \mathrm{indep.}} f(D^\tr Xg^*)\right) \rightarrow \int_{(\A_F^n)^m} f(D^\tr X) d\alpha_F^{nm}(X)
\end{equation}
as $i \rightarrow \infty$.
\end{proposition}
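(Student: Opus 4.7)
My approach is to expand the Hecke operator $T_{\gp}$ via its coset decomposition $K_\gp a_\gp K_\gp = \amalg_{h \in \mathcal L} hK_\gp$, interchange the order of summation, and interpret the resulting normalized sum as a Riemann sum converging to the right-hand side of \eqref{eq:inprop}. Since we have freedom in choosing the sequence $\{\gp_i\}$, I first restrict to primes $\gp_i$ coprime to the support of $f$ at the finite places, i.e.\ such that $e^{(j)}_{\gp_i}=0$ for all $j$; this excludes only finitely many primes and ensures $f^{(j)}_{\gp_i} = \mathbf 1_{\cO_{\gp_i}^n}$.

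Setting $c_\gp = N\gp_\infty^{1/nd}$ and inserting the coset expansion, the left-hand side of \eqref{eq:inprop} becomes
\begin{equation*}
\frac{1}{|\mathcal L|}\sum_{X \in (F^n)^m,\,\mathrm{indep}}\,\sum_{h \in \mathcal L} f\bigl(c_\gp^{-1} D^\tr X g^* h^*\bigr).
\end{equation*}
Because $h^*$ acts only at the $\gp$-coordinate and $c_\gp$ only at the infinite coordinates, each summand factors into three pieces: the $h$-independent non-$\gp$ finite factor $\prod_{\nu \in \f\setminus\{\gp\}}\prod_i f^{(i)}_\nu\bigl((D^\tr X g_\nu^*)_i\bigr)$, the $h$-independent infinite factor $\prod_i f^{(i)}_\infty\bigl(N\gp^{-1/nd}(D^\tr X g^*_\infty)_i\bigr)$, and the $\gp$-factor $\prod_i \mathbf 1_{\cO_\gp^n}\bigl((D^\tr X g^*_\gp h^*)_i\bigr)$.

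The combinatorial heart of the argument is the evaluation of the $\gp$-sum $\sum_h \prod_i \mathbf 1_{\cO_\gp^n}\bigl((Y h^*)_i\bigr)$ for $Y := D^\tr X g^*_\gp$. For a row vector $x$, the condition $x h^* \in \cO_\gp^n$ is equivalent to $x \in \cO_\gp^n h^\tr$, and as $h$ ranges over $\mathcal L$ the lattices $\cO_\gp^n h^\tr$ exhaust all index-$N\gp$ sublattices of $\cO_\gp^n$. So the $\gp$-sum counts such sublattices that contain every row of $Y$. In the generic case where $Y \in \mathrm{Mat}_{k \times n}(\cO_\gp)$ has primitive rank-$m$ row span, these sublattices correspond bijectively to hyperplanes in $\F_\gp^{n-m}$, numbering $1+N\gp+\cdots+N\gp^{n-m-1}$. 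Dividing by $|\mathcal L| = 1+N\gp+\cdots+N\gp^{n-1}$, the leading factor is $N\gp^{-m}$.

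The generic $X$'s project (under the infinite-adele component map) onto a lattice $L_\gp \subseteq (\A_\infty^n)^m$ cut out by the non-$\gp$ finite conditions and a $\gp$-condition of index $N\gp^m$, giving covolume growing like $N\gp^m$. Rescaling the infinite variable by $X \mapsto N\gp^{1/nd}X$ converts the combined expression into a Riemann sum of $\prod_i f^{(i)}_\infty$ over a lattice of $N\gp$-independent covolume with mesh tending to $0$; Riemann integrability of $f_\infty$ (via $f \in \mathcal S^k$) yields convergence, and after including the non-$\gp$ finite factors and the $|\Delta_F|^{-nm/2}$ normalization built into $\alpha_F$, one reassembles exactly $\int_{(\A_F^n)^m} f(D^\tr X) d\alpha_F^{nm}$. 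The hardest part I anticipate is bounding the non-generic contributions: $X$ whose $Y$ has non-primitive or lower-rank span mod $\gp$, and dependent tuples that must be excluded from the sum. These correspond to strictly lower-order lattice counts and sum over thinner sublattices in $(\A_\infty^n)^m$; uniform tail bounds using the compact support of $f_\infty$ together with elementary lattice point estimates should show their contribution is $o(1)$ as $N\gp \to \infty$.
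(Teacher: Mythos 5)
Your proposal follows the same basic strategy as the paper (expand the Hecke operator, interpret the result as a Riemann sum) but organizes the computation genuinely differently in one respect: you keep the Hecke coset representatives $h$ local at $\gp$ and count, for each fixed $X$, how many $h$'s make the $\gp$-component integral, whereas the paper globalizes each $h$ to a matrix $\gamma(h) \in \GL(n,F)$ (Lemma \ref{lemma:ltg}) after a preparatory strong-approximation step, and then replaces $X$ by $X\gamma$. Your local counting is correct: the number of $h \in \mathcal L$ with $D^\tr X g^*_\gp h^* \in \cO_\gp^{kn}$ is precisely the number of hyperplanes of $\F_\gp^n$ containing the row span of $D^\tr X \bmod \gp$, which is $1 + N\gp + \cdots + N\gp^{n-m-1}$ when that span has rank $m$, so dividing by $|\mathcal L|$ does give the factor $\approx N\gp^{-m}$. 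This avoids the paper's globalization lemma, which is a real simplification of the main-term bookkeeping. (Your description of the relevant lattice, however, is off: the Riemann sum runs over the \emph{fixed} lattice $L$ of $X$'s satisfying the $\nu \ne \gp$ conditions plus integrality at $\gp$, whose covolume does \emph{not} grow with $N\gp$; after rescaling by $N\gp^{-1/nd}$ the mesh is $N\gp^{-m}\det L \to 0$, and this cancels the $N\gp^{-m}$ from the coset count. The ``covolume $\sim N\gp^m$'' you mention belongs to the sublattice $L \cap (\cO_\gp^n h^\tr)^m$ for a single fixed $h$, which is a different arrangement of the same computation.)

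The genuine gap is the error analysis, which you compress into a single sentence. The non-generic $X$'s --- those with $\rk_{\F_\gp}(D^\tr X \bmod \gp) < m$ --- do \emph{not} form sublattices of $L$; they are lower-rank strata of $L$ reduced mod $\gp$, and ``elementary lattice point estimates'' do not directly apply. The paper handles exactly this (Lemma \ref{lemma:nc} and the closing paragraphs of Section 3.3, plus all of Section 3.4) by a structural argument: for such $X$ there is an $A \in \SL(m,\cO_F)$ putting a nonzero row of $AX$ into $\gp$ times a fixed lattice, and then the height identity $H_\infty(AX) = N\gp \cdot H_\infty(Y)$ (where $Y$ is $AX$ with that row divided by the global uniformizer $\tilde\pi_\gp$) forces some row of $X$ to be too long, after rescaling, to lie in the compact support of $f_\infty$. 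This step uses that $\gp$ is \emph{principal} --- a condition the paper imposes on the sequence $\{\gp_i\}$ but which you omit. So even though your rearrangement removes the paper's reason for principality (the globalization lemma), you will very likely re-encounter it in the error estimates, or else you need a different and unstated argument. As written, the proposal proves the main term convincingly modulo the covolume slip, but the treatment of the error terms is not a proof.
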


The proof of this statement we provide later in this section may appear technical due to the use of the adelic language, but the main idea is in fact quite simple: the left-hand side of \eqref{eq:inprop}, when unraveled, reduces to a ``lattice-point counting estimate.'' We illustrate the point by briefly explaining the simplest case, where $F = \Q, k=m=1, D=(1), g = \mathrm{Id}$, in the classical language. For $\phi : X_n \rightarrow \C$ and a rational prime $p$, $T_p\phi(\mathrm{Id})$ is the average of $\phi$ over the sublattices of $\Z^n$ of index $p$ scaled by $p^{-1/n}$. Denote those sublattices of $\Z^n$ by $\Lambda_1, \ldots, \Lambda_{p+1}$. Then, for $\phi(g) = \sum_{x \in \Z^n \atop x \neq 0} f(xg)$, where $f:\R^n \rightarrow \R$ is, say, the characteristic function of a bounded open set $B$, $T_p\phi(\mathrm{Id})$ equals
\begin{equation*}
\frac{1}{p+1}\sum_{i=1}^{p+1} \sum_{x \in \Lambda_i \atop x \neq 0} f(p^{-1/n}x).
\end{equation*}
But upon inspecting the distribution of the points of $\Lambda_1, \ldots, \Lambda_{p+1}$ --- they are ``equidistributed mod $p$'' in $\Z^n$, other than at the origin --- one observes that this equals
\begin{equation*}
\frac{1}{p+1}\sum_{x \in \Z^n} f(p^{-1/n}x) + \frac{p}{p+1} \sum_{x \in p\Z^n \atop x \neq 0} f(p^{-1/n}x).
\end{equation*}
As $p \rightarrow \infty$, the first sum converges to $\vol\,B = \int f dx$, because $B$ has $\approx p \cdot \vol\,B$ points of $p^{-1/n}\Z^n$; the second sum vanishes (but, if we allow $x = 0$, converges to $f(0)$). Our proof of the general case has more or less the same structure as this toy example; although the setup is a bit more intricate and the error estimates are longer, the ``equidistribution mod $\gp$,'' Lemma \ref{lemma:crux} below, lies at the heart of the argument, and everything else is built around it.

The two lemmas below, combined with Proposition \ref{prop:main}, yield Theorem \ref{thm:main}. As with Proposition \ref{prop:main}, both of them have counterparts in the original argument by Rogers \cite{Rog55}.

\begin{lemma} \label{lemma:hecke_invt}
Let $\phi: {\mathcal X}_n \rightarrow \R$ be integrable. Then
\begin{equation*}
\int_{{\mathcal X}_n} \phi d\mu = \int_{{\mathcal X}_n} T_\gp \phi d\mu.
\end{equation*}
\end{lemma}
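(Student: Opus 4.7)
The plan is to express $T_\gp$ in its finite-sum form
\begin{equation*}
T_\gp \phi(g) = \frac{1}{\#\mathcal L}\sum_{h \in \mathcal L} \phi\bigl(N\gp_\infty^{1/(nd)}gh\bigr),
\end{equation*}
interchange the sum with the integral (permissible since $\mathcal L$ is finite and $\mu$ is a probability measure), and reduce the lemma to showing that, for each $h \in \mathcal L$, the map $\tau_h : g \mapsto N\gp_\infty^{1/(nd)}gh$ is a $\mu$-preserving bijection of $\mathcal X_n$. Granting this, the claim follows immediately by averaging.

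First I would verify that $\tau_h$ sends $G_n$ into itself. A short computation using $r_1 + 2r_2 = d$ and the fact that $\|\cdot\|_\sigma$ is the square of the usual absolute value at complex places gives $\|N\gp_\infty^{1/(nd)}\|_{\A_F} = N\gp^{1/n}$, while $\|\det h\|_{\A_F} = N\gp^{-1}$ for $h \in K_\gp a_\gp K_\gp$ (since $a_\gp$ has determinant $\pi_\gp$). Hence
\begin{equation*}
\|\det \tau_h(g)\|_{\A_F} = \bigl(N\gp^{1/n}\bigr)^n \cdot \|\det g\|_{\A_F} \cdot N\gp^{-1} = \|\det g\|_{\A_F}.
\end{equation*}
Since the scalar $N\gp_\infty^{1/(nd)}$ lies in the center of $\GL(n,\A_F)$, the map $\tau_h$ commutes with the left $\Gamma_n$-action and therefore descends to a well-defined bijection $\mathcal X_n \to \mathcal X_n$.

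For the measure invariance, I would invoke the bi-invariance of the Tamagawa measure $\omega_n$ on $\GL(n,\A_F)$, which is visible from the defining formula $\omega_{n,\nu}(A) = |\det A|_\nu^{-n}\prod_{i,j}\alpha_\nu(a_{ij})$: both left and right multiplication by a fixed matrix scale $\prod\alpha_\nu(a_{ij})$ by a factor that exactly cancels the transformation of the $|\det A|_\nu^{-n}$ prefactor. Under the product decomposition $\omega_n = \mu \times \beta$ corresponding to $\GL(n,\A_F) \cong G_n \times \R_{>0}$ via $\|\det\|_{\A_F}$, and in view of the fact that $\tau_h$ fixes the $G_n$-component (established above), the restriction of $\tau_h$ to $G_n$ preserves $\mu$, and the induced map on $\mathcal X_n$ does likewise. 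The only real subtlety is the compatibility check in the first step, which explains the normalization $N\gp_\infty^{1/(nd)}$ in the definition of $T_\gp$; the rest is essentially formal manipulation of Haar measures.
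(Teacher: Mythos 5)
Your proof is correct and takes essentially the same approach as the paper, which simply invokes right $G_n$-invariance of $\mu$; your argument makes this explicit by folding the central scalar $N\gp_\infty^{1/(nd)}$ together with $h$ into a single element of $G_n$ acting on the right, and then verifying the determinant normalization, all of which is what the paper's one-line justification implicitly relies on.
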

\begin{proof}
This is immediate from the right invariance of $d\mu$.
\end{proof}

This lemma is, in a sense, a ``correction'' of the error in \cite{Rog55} that we pointed out in the introduction.
The following lemma is an adaptation of \cite[Theorem 1]{Rog55} to our context.

\begin{lemma} \label{lemma:equiv}
Let $\{\gp_i\}_{i \in \Z_{>0}}$ be a sequence of primes of $F$ such that $\lim_{i \rightarrow \infty} N\gp_i = \infty$. Let $\phi: {\mathcal X}_n \rightarrow \R$ be a measurable function such that, for almost every $g \in {\mathcal X}_n$, $T_{\gp_i} \phi(g)$ converges to a finite real number $I$ as $i \rightarrow \infty$. Then $\phi$ is integrable, and
\begin{equation*}
\int_{{\mathcal X}_n} \phi d\mu = I.
\end{equation*}
\end{lemma}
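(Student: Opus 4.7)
The approach mirrors the spirit of Rogers' argument for \cite[Theorem 1]{Rog55}: apply Fatou's lemma in both directions, using Lemma \ref{lemma:hecke_invt} to convert $\int T_{\gp_i}\phi\,d\mu$ into $\int \phi\,d\mu$. I would first handle the case $\phi \geq 0$ directly; the signed case, insofar as it is needed, reduces to this by splitting $f \in \mathcal{S}^k$ into its positive and negative parts in the application of Proposition \ref{prop:main}, producing two nonnegative $\phi^{\pm}$ that each satisfy the hypothesis with their own limits.

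For $\phi \geq 0$, the lower bound is immediate: Fatou's lemma combined with Lemma \ref{lemma:hecke_invt} gives
\begin{equation*}
I = \int_{\mathcal{X}_n} \lim_i T_{\gp_i}\phi\,d\mu \leq \liminf_i \int_{\mathcal{X}_n} T_{\gp_i}\phi\,d\mu = \int_{\mathcal{X}_n} \phi\,d\mu,
\end{equation*}
where for the moment both sides may be $+\infty$. For the reverse inequality I would truncate. Fix $M > 0$ and set $\phi_M := \min(\phi, M)$. Two elementary properties of $T_{\gp_i}$ --- that it preserves positivity and that $T_{\gp_i}\mathbf{1} = \mathbf{1}$, both immediate from the definition as a normalized average over $K_{\gp_i} a_{\gp_i} K_{\gp_i}$ --- yield $0 \leq T_{\gp_i}\phi_M \leq M$. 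Since also $T_{\gp_i}\phi_M \leq T_{\gp_i}\phi$, we obtain $\limsup_i T_{\gp_i}\phi_M \leq \min(M, I)$ a.e. The reverse Fatou lemma now applies (the sequence is uniformly bounded by the integrable constant $M$, since $\mu(\mathcal{X}_n) = 1$), yielding
\begin{equation*}
\limsup_i \int_{\mathcal{X}_n} T_{\gp_i}\phi_M\,d\mu \leq \int_{\mathcal{X}_n} \min(M, I)\,d\mu \leq I.
\end{equation*}
Since $\phi_M$ is bounded and hence integrable, Lemma \ref{lemma:hecke_invt} rewrites the left side as $\int \phi_M\,d\mu$, so $\int \phi_M\,d\mu \leq I$ for every $M$. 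Monotone convergence as $M \to \infty$ then delivers $\int \phi\,d\mu \leq I$, closing the argument.

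\textbf{Main obstacle.} The delicate step is the upper bound: without a priori integrability of $\phi$, one cannot directly exchange $\limsup$ and $\int$ against the sequence $T_{\gp_i}\phi$, since Lemma \ref{lemma:hecke_invt} may only read as $+\infty = +\infty$. The truncation $\phi_M$ is precisely the device that makes reverse Fatou applicable, and monotone convergence then removes the truncation. This maneuver rests on $T_{\gp_i}$ being a Markovian operator (positivity-preserving and fixing constants), a structural feature the adelic definition supplies at no cost.
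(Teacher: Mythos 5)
Your proof is correct and follows essentially the same strategy as the paper's: truncate $\phi$ at a constant, push the Hecke operator through the truncation via Lemma \ref{lemma:hecke_invt}, pass to the limit in $i$ by a dominated-convergence-type argument, then remove the truncation by monotone convergence for the upper bound, and finish the lower bound with Fatou. The only inessential variations are cosmetic: the paper invokes the dominated convergence theorem on $[T_{\gp_i}\phi]_h$ (together with the inequality $T_{\gp_i}[\phi]_h \leq [T_{\gp_i}\phi]_h$) where you invoke reverse Fatou on $T_{\gp_i}\phi_M$ using $T_{\gp_i}\phi_M \leq \min(M,\,T_{\gp_i}\phi)$; these are the same observation in different clothing. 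One small point in your favor: you make explicit that the argument as written needs $\phi \geq 0$ (equivalently, some bound from below, so that the truncated functions are genuinely dominated), and that the signed case in the intended application is handled by splitting into positive and negative parts. The paper leaves this positivity hypothesis tacit, even though every step of its proof (the DCT domination, the integrability of $[\phi]_h$ needed to invoke Lemma \ref{lemma:hecke_invt}, the MCT, and Fatou) silently relies on it.
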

\begin{proof}

For any real-valued function $F$ and $h \in \R$, write $[F]_h := \min(F, h)$. For any $h > I$, the dominated convergence theorem implies
\begin{equation*}
\int_{\mathcal X_n} \left[T_{\gp_i}\phi\right]_h d\mu \rightarrow I
\end{equation*}
as $i \rightarrow \infty$. Also by Lemma \ref{lemma:hecke_invt}, we have
\begin{equation*}
\int_{\mathcal X_n} [\phi]_h d\mu = \int_{\mathcal X_n} T_{\gp_i}[\phi]_h d\mu \leq \int_{\mathcal X_n} [T_{\gp_i}\phi]_h d\mu.
\end{equation*}
Taking $i \rightarrow \infty$ and then $h \rightarrow \infty$ here, by the monotone convergence theorem we obtain the upper bound
\begin{equation*}
\int_{\mathcal X_n} \phi d\mu \leq I.
\end{equation*}
In particular, this shows that $\phi$ is integrable.

On the other hand, Fatou's lemma and Lemma \ref{lemma:hecke_invt} imply that
\begin{equation*}
I = \int_{\mathcal X_n} \lim_{i \rightarrow \infty} T_{\gp_i}\phi d\mu \leq \lim_{i \rightarrow \infty} \int_{\mathcal X_n} T_{\gp_i}\phi d\mu = \int_{\mathcal X_n} \phi d\mu,
\end{equation*}
which is the lower bound that we need.
\end{proof}

\begin{proof}[Proof of Theorem \ref{thm:main}, assuming Proposition \ref{prop:main}]

By Proposition \ref{prop:main},
\begin{equation*}
\phi(g) = \sum_{X \in (F^n)^m \atop \mathrm{indep.}} f(D^\tr Xg^*)
\end{equation*}
satisfies the assumptions of Lemma \ref{lemma:equiv}. Thus
\begin{equation*}
\int_{\mathcal X_n} \phi d\mu = \int_{(\A_F^n)^m} f(D^\tr X) d\alpha_F^{nm}(X).
\end{equation*}
But the left-hand side here is equal to the left-hand side of \eqref{eq:main}, as remarked in comment (ii) under the statement of Theorem \ref{thm:main}.
\end{proof}

\subsection{Proof of Proposition \ref{prop:main}: preparation}

First, let us determine the sequence $\{\gp_i\}_{i \in \Z_{> 0}}$. We choose it to be the sequence of the primes $\gp$, in the ascending order of the norm, satisfying
\begin{enumerate}[(i)]
\item $\gp = (\tilde{\pi}_\gp)$ is principal. If necessary, we adjust $\pi_\gp$ by a factor of $\cO_\gp^*$, so that $\tilde{\pi}_\gp$ maps to $\pi_\gp$ under the natural embedding $\cO_F \hookrightarrow \cO_\gp$.
\item $\gp$ is coprime to $I^{(i)}$ for any $i$.
\end{enumerate}
By the Tchebotarev density theorem, there are infinitely many such $\gp$. 

The next, lengthier, step is to write out the left-hand side of \eqref{eq:inprop} to recast our problem as that of ``lattice-point counting,'' loosely speaking. We first need some preparations.

Fix $g \in G_n$ from now on. Define $\eta \in \A_F^*$ by $\eta_\nu = \det g_\nu^*$; we have $\|\eta\|_{\A_F} = 1$. Also write $a_\eta = \diag(\eta,1,\ldots,1)$. Then $g^*a_\eta^{-1}$ is an element of $\SL(n,\A_F)$, where the strong approximation applies. Thus there exists $k \in \SL(n, F)$ such that
\begin{equation} \label{eq:s-a}
(kg^*)_\f = l \cdot (a_\eta)_\f
\end{equation}
for some $l \in \prod_{\nu \in \f} \SL(n, \cO_\nu)$ arbitrarily close to the identity. Let us make $l$ so close that, for each $\nu \in \f$, $l_\nu$ is equal to the identity matrix modulo a sufficiently large power of $\pi_\nu$ (depending only on $g$ and $f$), so that $l_\nu^{-1}$ fixes the set $\left(\prod_i \pi_\nu^{e_\nu^{(i)}}\cO_\nu^n\right) \cdot (a_\eta)_\nu^{-1}$ when multiplied from the right.

In what follows, $\gp$ is a prime that satisfies the two conditions (i),(ii) above, and has a sufficiently large norm, so that it satisfies the additional condition
\begin{enumerate}
\item[(iii)] $g_\gp \in K_\gp$. This implies $\eta_\gp \in \cO_\gp^*$, so that $(a_\eta)_\gp \in K_\gp$.
\end{enumerate}

Our starting point is the following lemma, that simplifies the impact of $g$ on our subsequent computations.

\begin{lemma}
With all the notations and assumptions above, we have
\begin{equation*}
T_\gp\left(\sum_{X \in (F^n)^m \atop \mathrm{indep.}} f(D^\tr Xg^*)\right) = \frac{1}{\omega_\gp(K_\gp a_\gp K_\gp)} \int_{K_\gp a_\gp K_\gp} \sum_{X \in (F^n)^m \atop \mathrm{indep.}} \tilde f(N\gp_{\infty}^{-\frac{1}{nd}} D^\mathrm{tr}X h^*a_\eta) d\omega_\gp(h)
\end{equation*}
for some $\tilde f \in \mathcal S^k$ such that $f_\f = \tilde f_\f$ and $\alpha_\infty^{mn}(f_\infty \circ D^\tr) = \alpha_\infty^{mn}(\tilde f_\infty \circ D^\tr)$.
\end{lemma}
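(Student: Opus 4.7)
The plan is to unfold $T_\gp$ into its integral form, then use the strong-approximation identity $(kg^*)_\f = l(a_\eta)_\f$ to transfer all $g$-dependence at the finite places (except $\gp$) into $l$ and $a_\eta$, kill a leftover conjugation at $\gp$ by a change of variables inside the Hecke integral, and absorb the residual data at infinity into $\tilde f_\infty$. First, using $(rI)^* = r^{-1}I$ for adelic scalars together with $(gh)^* = h^* g^*$,
\begin{equation*}
T_\gp\psi(g) = \omega_\gp(K_\gp a_\gp K_\gp)^{-1}\int_{K_\gp a_\gp K_\gp}\sum_{X \in (F^n)^m \atop \mathrm{indep.}} f\bigl(N\gp_\infty^{-1/(nd)}D^\tr X h^* g^*\bigr)\,d\omega_\gp(h).
\end{equation*}

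Next I would substitute $X \mapsto Xk$ in the inner sum; this is a bijection on independent $X \in (F^n)^m$ since $k \in \GL(n,F)$. The argument of $f$ becomes $N\gp_\infty^{-1/(nd)} D^\tr X \cdot (kh^*g^*)$, and the adele $kh^*g^*$ decomposes place by place as $k_\infty g^*_\infty$ at $\infty$, $l_\nu (a_\eta)_\nu$ at finite $\nu \ne \gp$ (from $(kg^*)_\f = l(a_\eta)_\f$), and $k_\gp h^*_\gp k_\gp^{-1} \cdot l_\gp (a_\eta)_\gp$ at $\gp$. The hypothesis that $l_\nu^{-1}$ preserves the translated support of $f_\f$ under right multiplication yields $f_\nu(D^\tr X_\nu l_\nu (a_\eta)_\nu) = f_\nu(D^\tr X_\nu (a_\eta)_\nu)$ at every $\nu \ne \gp$. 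At $\gp$, condition (iii) gives $g^*_\gp, (a_\eta)_\gp \in K_\gp$, and hence $k_\gp = l_\gp (a_\eta)_\gp (g^*_\gp)^{-1} \in K_\gp$; since $f_\gp = \mathbf 1_{\cO_\gp^n}$ is right $K_\gp$-invariant, the $\gp$-factor collapses to $f_\gp(D^\tr X_\gp k_\gp h^*_\gp k_\gp^{-1})$.

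To eliminate the lingering conjugation by $k_\gp$ at $\gp$, I would change variables $h_\gp = k_\gp^\tr h'_\gp (k_\gp^\tr)^{-1}$ inside the Hecke integral. Since $k_\gp^\tr, (k_\gp^\tr)^{-1} \in K_\gp$, this substitution preserves the double coset $K_\gp a_\gp K_\gp$ and the bi-invariant Haar measure $\omega_\gp$. A brief computation using $(AB)^* = A^* B^*$ gives $(k_\gp^\tr h'_\gp (k_\gp^\tr)^{-1})^* = k_\gp^{-1} (h'_\gp)^* k_\gp$, so $k_\gp h^*_\gp k_\gp^{-1}$ becomes $(h'_\gp)^*$; after renaming $h' \mapsto h$, the $\gp$-factor reduces to $f_\gp(D^\tr X_\gp h^*_\gp) = f_\gp(D^\tr X_\gp h^*_\gp (a_\eta)_\gp)$, and the finite part of the summand equals $f_\f\bigl((D^\tr X h^* a_\eta)_\f\bigr)$, as desired. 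Setting $\tilde f_\f := f_\f$ and $\tilde f_\infty(Z) := f_\infty(ZC)$ with $C := (a_\eta)_\infty^{-1} k_\infty g^*_\infty$, one immediately verifies
\begin{equation*}
\tilde f_\infty\bigl(N\gp_\infty^{-1/(nd)} D^\tr X_\infty (a_\eta)_\infty\bigr) = f_\infty\bigl(N\gp_\infty^{-1/(nd)} D^\tr X_\infty k_\infty g^*_\infty\bigr).
\end{equation*}

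Finally, $\tilde f \in \mathcal S^k$ because right multiplication by the invertible $C$ preserves boundedness, compact support, and Riemann integrability of $f_\infty$. At every infinite place $\sigma$, $\det C_\sigma = \eta_\sigma^{-1} \cdot 1 \cdot \eta_\sigma = 1$ (using $k \in \SL(n,F)$ and $\det g^*_\sigma = \eta_\sigma$), so the linear map $X \mapsto XC$ on $(\A_\infty^n)^m$ is measure-preserving (Jacobian $\|\det C\|_{\A_\infty}^m = 1$), giving $\alpha_\infty^{mn}(\tilde f_\infty \circ D^\tr) = \alpha_\infty^{mn}(f_\infty \circ D^\tr)$. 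The step I expect to require the most care is the change of variables at $\gp$: one must simultaneously confirm $k_\gp \in K_\gp$ (which depends precisely on condition (iii) combined with the strong-approximation choice of $k$) and track the fact that it is $h^*_\gp$, not $h_\gp$, that undergoes the intended conjugation, so the substitution in $h_\gp$ must be engineered accordingly.
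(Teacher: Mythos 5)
The proposal has a genuine error at the very first step: the identity $(gh)^* = h^*g^*$ is false. Inverse-transpose is a \emph{homomorphism} (composition of two anti-homomorphisms), so $(gh)^* = ((gh)^{-1})^\tr = (h^{-1}g^{-1})^\tr = (g^{-1})^\tr (h^{-1})^\tr = g^*h^*$. You even invoke the correct form $(AB)^*=A^*B^*$ later in the same argument, so the proposal is internally inconsistent. With the correct identity, the unfolded integrand is $f(N\gp_\infty^{-1/(nd)}D^\tr X\, g^*h^*)$, and after replacing $X$ by $Xk$ the $\gp$-component of the adele multiplying $D^\tr X$ is simply $k_\gp g^*_\gp h^*_\gp = l_\gp(a_\eta)_\gp h^*_\gp$, with no conjugation of $h^*_\gp$ by $k_\gp$. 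Your spurious expression $k_\gp h^*_\gp k_\gp^{-1}\, l_\gp(a_\eta)_\gp$, and the extra change of variable $h_\gp = k_\gp^\tr h'_\gp (k_\gp^\tr)^{-1}$ designed to cancel it, are artifacts of the wrong identity; the paper only needs to move the $K_\gp$-element $l_\gp(a_\eta)_\gp$ past $h^*_\gp$, which it does via an implicit conjugation of $h$ by an element of $K_\gp$.

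It is worth noting that the final conclusion happens to survive, but not for the reason you give. The integrands $f(\ldots g^*h^*)$ and $f(\ldots h^*g^*)$ differ by the substitution $h_\gp \mapsto g_\gp h_\gp g_\gp^{-1}$, and since condition (iii) forces $g_\gp \in K_\gp$, this conjugation is a measure-preserving bijection of $K_\gp a_\gp K_\gp$, so the two integrals coincide. Thus your first display is a true equation, but it is not justified by the false identity you cite, and in fact depends on condition (iii), which you do not invoke at that point. A proof must justify each step by correct reasoning, not merely arrive at a correct statement. Aside from this, the remaining steps --- strong approximation, the $l$-invariance at $\nu\neq\gp$, the check $k_\gp = l_\gp(a_\eta)_\gp(g^*_\gp)^{-1}\in K_\gp$, absorbing the infinite data into $\tilde f_\infty(Z)=f_\infty(ZC)$ with $C=(a_\eta)_\infty^{-1}k_\infty g^*_\infty$, and the determinant-one Jacobian argument --- all agree with the paper's proof.
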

\begin{proof}
From the definition of $T_\gp$, we have
\begin{equation*}
T_\gp\left(\sum_{X \in (F^n)^m \atop \mathrm{indep.}} f(D^\tr Xg^*)\right) = \frac{1}{\omega_\gp(K_\gp a_\gp K_\gp)} \int_{K_\gp a_\gp K_\gp} \sum_{X \in (F^n)^m \atop \mathrm{indep.}} f(D^\tr X(N\gp_{\infty}^\frac{1}{nd}gh)^*) d\omega_\gp(h).
\end{equation*}
The set of independent $m$-tuples of $F^n$ is invariant under the right multiplication by $\GL(n,F)$, so we can replace $X$ by $Xk$ above. Also, by \eqref{eq:s-a},
\begin{equation*}
k(N\gp_{\infty}^\frac{1}{nd}gh)^* = N\gp_{\infty}^{-\frac{1}{nd}}kg^*h^* = N\gp_{\infty}^{-\frac{1}{nd}}(kg^*)_\infty l(a_\eta)_\f h^*.
\end{equation*}
Plugging this in, we obtain
\begin{equation*}
\frac{1}{\omega_\gp(K_\gp a_\gp K_\gp)} \int_{K_\gp a_\gp K_\gp} \sum_{X \in (F^n)^m \atop \mathrm{indep.}} f(N\gp_{\infty}^{-\frac{1}{nd}} D^\mathrm{tr}X(kg^*)_\infty l (a_\eta)_\f \,h^*) d\omega_\gp(h).
\end{equation*}
From our assumptions on $l$ and $a_\eta$, we see that this is equal to
\begin{align*}
&\frac{1}{\omega_\gp(K_\gp a_\gp K_\gp)} \int_{K_\gp a_\gp K_\gp} \sum_{X \in (F^n)^m \atop \mathrm{indep.}} f(N\gp_{\infty}^{-\frac{1}{nd}} D^\mathrm{tr}X(kg^*)_\infty h^*l(a_\eta)_\f ) d\omega_\gp(h) \\
&=\frac{1}{\omega_\gp(K_\gp a_\gp K_\gp)} \int_{K_\gp a_\gp K_\gp} \sum_{X \in (F^n)^m \atop \mathrm{indep.}} f(N\gp_{\infty}^{-\frac{1}{nd}} D^\mathrm{tr}X(kg^*)_\infty h^*(a_\eta)_\f ) d\omega_\gp(h) \\
&=\frac{1}{\omega_\gp(K_\gp a_\gp K_\gp)} \int_{K_\gp a_\gp K_\gp} \sum_{X \in (F^n)^m \atop \mathrm{indep.}} f(N\gp_{\infty}^{-\frac{1}{nd}} D^\mathrm{tr}Xh^*a_\eta ((kg^*)^{-1}a_\eta)_\infty^{-1}) d\omega_\gp(h).
\end{align*}
To simplify, we introduce $\tilde f(X) := f_\f(X) f_\infty(X((kg^*)^{-1}a_\eta)_\infty^{-1})$. Since $\det ((kg^*)^{-1}a_\eta)_\sigma = 1$ for all $\sigma \mid \infty$, $f_\infty$ and $\tilde f_\infty$ have the same volume, and by the same principle, so do $f_\infty \circ D^\tr$ and $\tilde f_\infty \circ D^\tr$. Therefore, the above is equal to
\begin{equation*}
\frac{1}{\omega_\gp(K_\gp a_\gp K_\gp)} \int_{K_\gp a_\gp K_\gp} \sum_{X \in (F^n)^m \atop \mathrm{indep.}} \tilde f(N\gp_{\infty}^{-\frac{1}{nd}} D^\mathrm{tr}X h^*a_\eta) d\omega_\gp(h),
\end{equation*}
as desired.
\end{proof}

In order not to overburden ourselves with notations, in what follows below, we abuse the language slightly and continue writing $f$ in place of $\tilde f$. It does not make a difference in the end, since we are interested in volume computations, and $f$ and $\tilde f$ have the same volume. In summary, the quantity under question has become
\begin{equation*}
\frac{1}{\omega_\gp(K_\gp a_\gp K_\gp)} \int_{K_\gp a_\gp K_\gp} \sum_{X \in (F^n)^m \atop \mathrm{indep.}} f(N\gp_{\infty}^{-\frac{1}{nd}} D^\mathrm{tr}X h^*a_\eta) d\omega_\gp(h).
\end{equation*}

As discussed in Section 2.5 above, we may express this integral as the sum
\begin{equation} \label{eq:3-2mid1}
\frac{1}{\omega_\gp(K_\gp a_\gp K_\gp)} \sum_{h \in \mathcal L} \sum_{X \in (F^n)^m \atop \mathrm{indep.}} f(N\gp_{\infty}^{-\frac{1}{nd}} D^\mathrm{tr} Xh^*a_\eta).
\end{equation}

We make one more modification before the main computation.
\begin{lemma} \label{lemma:ltg}
For each $h \in \mathcal L$, there exists $\gamma \in \GL(n,F)$ such that
\begin{equation*}
(\gamma h^*)_\gp \in \GL(n,\cO_\gp),\ 
(\gamma a_\eta)_\nu \in (a_\eta)_\nu \cdot \GL(n,\cO_\nu) \mbox{ for all finite } \nu \neq \gp.
\end{equation*}
More precisely, let $I = \bigcap_{i=1}^k I^{(i)}$. Then there exists a subset $\mathcal R \subseteq I$ that bijects to $\mathcal R_\gp$ by reduction modulo $\gp$. For $h = h(j; a_{1,\gp}, \ldots, a_{n-j-1,\gp}) \in \mathcal L$, the corresponding $\gamma = \gamma(h)$ will be
\begin{equation*}
\gamma(j;a_1,\ldots, a_{n-j-1}) = \begin{pmatrix}
\mathrm{Id}_j &                                  &          &            & \\
                       & \tilde\pi_\gp              &           &        &             \\
                       & a_{1}      & 1       &            &  \\
                       & \vdots                       &          & \ddots & \\
                       & a_{n-j-1} &          &            & 1
\end{pmatrix},
\end{equation*}
where $a_i \mapsto a_{i,\gp}$ under the said bijection.
\end{lemma}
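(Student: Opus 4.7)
The plan is to define $\gamma$ by the explicit formula in the statement and then verify the two conditions by direct matrix computation. The main subtlety lies in the choice of $\mathcal R$, which needs to satisfy additional divisibility at the finite places where $\eta_\nu$ has negative order. Specifically, I introduce the integral ideal
\[ J_\eta^- := \prod_{\nu \in \f,\ \ord_\nu(\eta_\nu) < 0} \gp_\nu^{-\ord_\nu(\eta_\nu)}, \]
which is genuinely integral since $\eta \in \A_F^*$ has $\ord_\nu(\eta_\nu) = 0$ for almost all $\nu$. By hypothesis (ii), $\gp$ is coprime to $I$, and by hypothesis (iii), $g_\gp \in K_\gp$ forces $\eta_\gp \in \cO_\gp^*$, so $\gp$ is also coprime to $J_\eta^-$. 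Hence $(I \cap J_\eta^-) + \gp = \cO_F$, and the Chinese Remainder Theorem gives a surjective reduction map $I \cap J_\eta^- \twoheadrightarrow \cO_F / \gp \cong \cO_\gp / \gp\cO_\gp$, from which I pick $\mathcal R \subseteq I \cap J_\eta^-$ as a set of representatives bijecting onto $\mathcal R_\gp$.

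For condition (a), $h_\gp^*$ is the identity except in its $(j{+}1)$-st column, with entries $\pi_\gp^{-1}$ at row $j{+}1$ and $-\pi_\gp^{-1} a_{i,\gp}$ at row $j{+}1{+}i$. Multiplying on the left by $\gamma_\gp$, whose only nontrivial column is also the $(j{+}1)$-st (with $\tilde\pi_\gp$ at row $j{+}1$ and $a_i$ at row $j{+}1{+}i$), the product has $(j{+}1, j{+}1)$ entry equal to $\tilde\pi_\gp \pi_\gp^{-1} = 1$ by hypothesis (i), and $(j{+}1{+}i, j{+}1)$ entry equal to $\pi_\gp^{-1}(a_i - a_{i,\gp}) \in \cO_\gp$ by the choice of $\mathcal R$. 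The remaining columns of $\gamma h^*$ coincide with those of $\gamma$, which are standard basis vectors; so $(\gamma h^*)_\gp$ is a unipotent lower triangular matrix with $\cO_\gp$-entries, and hence lies in $\GL(n,\cO_\gp)$.

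For condition (b), fix $\nu \in \f \setminus \{\gp\}$ and compute $k_\nu := (a_\eta)_\nu^{-1} \gamma_\nu (a_\eta)_\nu$. Conjugation by $\diag(\eta_\nu, 1, \ldots, 1)$ fixes entries off row 1 and column 1, scales $(1,s)$-entries by $\eta_\nu^{-1}$ for $s \neq 1$, and scales $(r,1)$-entries by $\eta_\nu$ for $r \neq 1$. When $j \geq 1$, the first row and column of $\gamma_\nu$ are already standard basis vectors, so $k_\nu = \gamma_\nu \in \GL(n,\cO_\nu)$ (since $\tilde\pi_\gp \in \cO_\nu^*$ and $a_i \in \cO_\nu$). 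When $j = 0$, the off-diagonal column-1 entries $a_i$ become $\eta_\nu a_i$, which lie in $\cO_\nu$ precisely because $\mathcal R \subseteq J_\eta^-$ guarantees $\ord_\nu(a_i) \geq -\ord_\nu(\eta_\nu)$. In either case $\det k_\nu = \tilde\pi_\gp \in \cO_\nu^*$, so $k_\nu \in \GL(n, \cO_\nu)$, as required.

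The main obstacle is minor and essentially notational: keeping track of the divisibility conditions at each finite prime to identify the correct refinement $I \cap J_\eta^-$ of $I$ for $\mathcal R$ to live in. Once that bookkeeping is done, the verification reduces to two mechanical matrix computations, and the lemma will serve its intended purpose of absorbing the action of $h$ at all finite places away from $\gp$ into the $K_\nu$-invariance of $f$.
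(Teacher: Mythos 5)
Your proof is correct and follows the same overall strategy as the paper's: construct $\mathcal R$ so that each $a_i$ and each $\eta a_i$ is integral at every finite place $\nu \neq \gp$, then verify the two membership conditions by direct (lower-triangular) matrix computation, using $(a_i)_\gp \equiv a_{i,\gp} \pmod{\gp}$ at $\gp$ and the unit-determinant $\tilde\pi_\gp$ away from $\gp$. The only difference is cosmetic, in how $\mathcal R$ is produced: the paper first takes a naive $\mathcal R' \subseteq I$ lifting $\mathcal R_\gp$ and then rescales by a carefully chosen $\delta \in \cO_F$ that simultaneously clears the denominators coming from $I$ and from $\eta$ at all $\nu \neq \gp$, whereas you absorb the $\eta$-correction into the ideal itself by intersecting $I$ with the auxiliary integral ideal $J_\eta^-$ up front --- two equivalent devices achieving the same integrality.
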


\begin{proof}

Consider the $\cO_F$-module map
\begin{equation*}
\iota: I \hookrightarrow I^{(i)} \rightarrow \cO_\gp,
\end{equation*}
where the first map is inclusion, and the second map is the restriction to the place $\gp$. Because $I$ is coprime to $\gp$, $\iota$ is a surjection. It also induces the $\cO_F$-module isomorphisms
\begin{equation*}
I/\gp I \cong I^{(i)}/\gp I^{(i)} \cong \cO_\gp / \gp\cO_\gp =: \F_\gp.
\end{equation*}
Choose any $\mathcal R' \subseteq I$ so that $\mathcal R'$ bijects to $\mathcal R_\gp$ under $\iota$. Next, choose $\delta \in \cO_F$ such that
\begin{equation*}
\delta_\gp \equiv 1 (\mbox{mod } \gp),\  \delta_\nu, (\delta\eta)_\nu \in \gp_\nu^{-\ord_\nu I} \cap \cO_\nu \mbox{ for all finite } \nu \neq \gp.
\end{equation*}
Let $\mathcal R := \delta \mathcal R'$. This step ensures that both $a$ and $\eta a$ are integral at all finite places for any $a \in \mathcal R$, a property we will need below.

For $h=h(j; a_{1,\gp}, \ldots, a_{n-j-1,\gp})$, we match $\gamma=\gamma(j;a_1,\ldots,a_{n-j-1,\gp})$, where $a_i$ is the (unique) element of $\mathcal R$ such that $\iota(\delta^{-1}a_i) = \iota(a_i) = a_{i,\gp} \in \mathcal R_\gp$; in particular we have $(a_i)_\gp - a_{i,\gp} \in \gp\cO_\gp$. Our choice of $\mathcal R$ allows us to check by straightforward matrix multiplications that $(\gamma h^*)_\gp \in \GL(n,\cO_\gp)$ and that $(\gamma_\nu a_\eta)_\nu \in (a_\eta)_\nu \cdot \GL(n,\cO_\nu)$ for finite $\nu \neq \gp$, as desired.
\end{proof}

Let $\mathcal M$ be the set corresponding to $\mathcal L$ by Lemma \ref{lemma:ltg}. It will be convenient to partition $\mathcal M = \bigcup_{j=0}^{n-1} \mathcal M^{(j)}$, where
\begin{equation*}
\mathcal M^{(j)} = \left\{ 
\gamma(j;a_1,\ldots,a_{n-j-1}) : a_i \in \mathcal R
\right\}.
\end{equation*}
By replacing $X$ with $X\gamma(h)$, \eqref{eq:3-2mid1} can now be rewritten as
\begin{align}
&\frac{1}{\omega_\gp(K_\gp a_\gp K_\gp)} \sum_{\gamma \in \mathcal M}\sum_{X \in (F^n)^m \atop \mathrm{indep.}} f_\f(D^\tr X\gamma h^* a_\eta) f_\infty(N(\gp)^{-\frac{1}{nd}}(D^\mathrm{tr}X\gamma a_\eta)_\infty) \notag \\
&= \frac{1}{\omega_\gp(K_\gp a_\gp K_\gp)} \sum_{\gamma \in \mathcal M}\sum_{X \in (F^n)^m\ \mathrm{indep.} \atop D^\mathrm{tr}X \in \prod_i I^{(i)}_{\eta^{-1}} \times (I^{(i)})^{n-1}} f_\infty(N(\gp)^{-\frac{1}{nd}}(D^\mathrm{tr}X\gamma a_\eta)_\infty), \label{eq:intermediate2}
\end{align}
where
\begin{equation*}
I^{(i)}_{\eta^{-1}} = I^{(i)} \cdot \prod_{\nu \in \f} \gp_\nu^{- \mathrm{ord}_\nu \eta}.
\end{equation*}

\subsection{Proof of Proposition \ref{prop:main}: the main term}

It turns out that the main contribution to \eqref{eq:intermediate2} comes from the partial sum
\begin{equation}\label{eq:he_main}
\frac{1}{\omega_\gp(K_\gp a_\gp K_\gp)} \sum_{\gamma \in \mathcal M^{(0)}}\sum_{X \in (F^n)^m\ \mathrm{indep.} \atop D^\mathrm{tr}X \in \prod_i I^{(i)}_{\eta^{-1}} \times (I^{(i)})^{n-1}} f_\infty(N(\gp)^{-\frac{1}{nd}}(D^\mathrm{tr}X\gamma a_\eta)_\infty).
\end{equation}
In this section we compute an asymptotic for \eqref{eq:he_main}.

For $X \in (F^n)^m$, let $\bar X$ be the $m \times (n-1)$ matrix obtained by removing the first column of $X$. Let us write explicitly
\begin{equation*}
X =
\begin{pmatrix}
x_{10} & x_{11} & \ldots & x_{1,n-1} \\
\vdots  &  \vdots  &          & \vdots  \\
x_{m0} & x_{m1} & \ldots & x_{m,n-1}
\end{pmatrix},
\ \bar X :=
\begin{pmatrix}
x_{11} & \ldots & x_{1,n-1} \\
\vdots  &          & \vdots  \\
x_{m1}  & \ldots & x_{m,n-1}
\end{pmatrix}.
\end{equation*}
Thanks to our assumption about the shape of $D$ (comment (i) after the statement of Theorem \ref{thm:main}), for $1 \leq i \leq m$ we may assume that the $i$-th row of $X$ is an element of $I^{(i)}_{\eta^{-1}} \times (I^{(i)})^{n-1}$, and similarly for $\bar X$. For convenient reference, we also write out
\begin{equation} \label{eq:xgamma}
X\gamma = 
\begin{pmatrix}
x_{10}\tilde\pi_\gp + \sum_{i=1}^{n-1} a_ix_{1i} & x_{11} & \ldots & x_{1,n-1} \\
\vdots  & \vdots &          & \vdots  \\
x_{m0}\tilde\pi_\gp + \sum_{i=1}^{n-1} a_ix_{mi} & x_{m1} & \ldots & x_{m,n-1}
\end{pmatrix},
\end{equation}
where $\gamma = \gamma(0;a_1,\ldots,a_{n-1}) \in \mathcal M^{(0)}$.

We first show that those $X$ for which the corresponding $\bar X$ is dependent mod $\gp$ --- more precisely, its rows, reduced mod $\gp$, are linearly dependent over $\F_\gp$ --- do not contribute at all to \eqref{eq:he_main}, provided $N\gp$ is sufficiently large.

\begin{lemma} \label{lemma:nc}
Take the notations above, and suppose $\bar X$ is dependent mod $\gp$. Then at least one row $x_i\gamma$ of $X\gamma$ satisfies $H_\infty(x_i\gamma) \gg N\gp^{1/md}$, where the implied constant is only dependent on $\eta$ and $I^{(1)}, \ldots, I^{(m)}$.
\end{lemma}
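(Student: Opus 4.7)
My plan is to reduce the lemma to a global bound on $H_\infty(X\gamma)$ and then invoke the product formula. By Lemma \ref{lemma:height}, $H_\infty(X\gamma) \leq \prod_{i=1}^m H_\infty(x_i\gamma)$, so it suffices to show $H_\infty(X\gamma) \gg N\gp$; then at least one factor is at least $N\gp^{1/m} \geq N\gp^{1/(md)}$.

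The heart of the argument would be showing $H_\gp(X\gamma) \leq N\gp^{-1}$. Conditions (ii) and (iii) on $\gp$ ensure that $X$ has $\cO_\gp$-integral entries at the place $\gp$. From \eqref{eq:xgamma}, for $\gamma = \gamma(0; a_1, \ldots, a_{n-1}) \in \mathcal M^{(0)}$ the last $n-1$ columns of $X\gamma$ are literally the columns of $\bar X$, while the first column equals $\tilde\pi_\gp \cdot (\text{column } 0 \text{ of } X) + \sum_j a_j \cdot (\text{column } j \text{ of } \bar X)$. Since $\tilde\pi_\gp \equiv 0 \pmod{\gp}$, reducing mod $\gp$ places every column of $X\gamma$ in the $\F_\gp$-column span of $\bar X \bmod \gp$, which by hypothesis has rank strictly less than $m$. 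Hence every $m \times m$ minor of $X\gamma$ lies in $\gp\cO_\gp$, giving $H_\gp(X\gamma) \leq \|\pi_\gp\|_\gp = N\gp^{-1}$.

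Next, for $\nu \in \f \setminus \{\gp\}$, the matrix $\gamma$ has $\cO_\nu$-integral entries and determinant $\tilde\pi_\gp \in \cO_\nu^*$, so $\gamma \in K_\nu$ and $H_\nu(X\gamma) = H_\nu(X)$. Entrywise bounds on the $m \times m$ minors using $x_{i,0} \in I^{(i)}_{\eta^{-1}}$ and $x_{i,j} \in I^{(i)}$ for $j \geq 1$ give $H_\nu(X) \leq \max(1, \|\eta\|_\nu) \prod_i \|I^{(i)}\|_\nu^{-1}$, and only finitely many places contribute a nontrivial factor, so $\prod_{\nu \in \f \setminus \{\gp\}} H_\nu(X\gamma)$ is bounded by a constant depending only on $\eta$ and the $I^{(i)}$. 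Since the rows of $X\gamma$ are independent over $F$, they span an $m$-dimensional subspace $L \subseteq F^n$, whose height $H(L) = \prod_\nu H_\nu(X\gamma)$ satisfies $H(L) \gg_F 1$ (a standard consequence of the product formula in Thunder's normalization). Combining these bounds,
\[ H_\infty(X\gamma) \geq \frac{H(L)}{H_\gp(X\gamma) \prod_{\nu \in \f \setminus \{\gp\}} H_\nu(X\gamma)} \gg_{\eta, I^{(i)}} N\gp, \]
as required.

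The main obstacle, and the only step that essentially uses the construction of $\gamma \in \mathcal M^{(0)}$, is the $\gp$-adic column reduction in the second paragraph: it is precisely the cancellation $\tilde\pi_\gp \equiv 0 \pmod{\gp}$ combined with the identity block in $\gamma(0;\cdots)$ that propagates the rank drop of $\bar X$ to all of $X\gamma$. A secondary technical point is citing the bound $H(L) \gg_F 1$; one could alternatively bypass this by controlling the Euclidean covolume of the $\cO_F$-module spanned by the rows of $X\gamma$ via the Steinitz structure theorem, which is essentially equivalent.
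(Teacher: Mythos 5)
Your proof is correct, but it takes a genuinely different route from the paper's. The paper's proof of Lemma~\ref{lemma:nc} works with \emph{rows}: it uses the surjectivity of $\SL(m,\cO_F)\to\SL(m,\F_\gp)$ to produce $A\in\SL(m,\cO_F)$ so that one row of $A\bar X$ is divisible by $\tilde\pi_\gp$, then factors that row of $AX\gamma$ as $\tilde\pi_\gp\cdot(\text{row of }Y)$ with $Y$ living in a fixed $\cO_F$-module, obtaining $H_\infty(X\gamma)=H_\infty(AX\gamma)=N\gp\cdot H_\infty(Y)\gg N\gp$ via a reduction-theoretic lower bound on $H_\infty(Y)$ (Borel, Lemma 16.2). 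You instead work with \emph{columns} at the single place $\gp$: you observe that mod $\gp$ every column of $X\gamma$ lies in the $\F_\gp$-column span of $\bar X$, which has rank $<m$, hence every $m\times m$ minor is in $\gp\cO_\gp$ and $H_\gp(X\gamma)\leq N\gp^{-1}$; you then invoke the product formula via the height of the $F$-subspace spanned by the rows, using $H(L)\gg_F 1$ to push the saved factor $N\gp$ to the infinite places. Both arguments end by applying Lemma~\ref{lemma:height} to distribute the bound to a single row, and both rely on an external lower bound of reduction-theoretic flavor (the paper bounds $H_\infty(Y)$ from below; you bound $H(L)$ from below --- you correctly note these are essentially equivalent inputs). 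Your version avoids the explicit row reduction and the bookkeeping about which fractional ideal the entries of $Y$ lie in, at the cost of needing the well-known but nontrivial fact that $H(L)$ is bounded below uniformly for $F$-subspaces; the paper's version is more self-contained in that it only needs a covolume bound for a lattice in a fixed ambient module. One small point to be explicit about: your upper bound $\prod_{\nu\in\f\setminus\{\gp\}}H_\nu(X\gamma)\leq C(\eta,I^{(1)},\dots,I^{(m)})$ relies on $\gamma\in\GL(n,\cO_\nu)$ for $\nu\neq\gp$ (so $H_\nu(X\gamma)=H_\nu(X)$) together with the fact that the entries of $X$ lie in $\prod_i I^{(i)}_{\eta^{-1}}\times(I^{(i)})^{n-1}$, so only finitely many $\nu$ contribute a factor $>1$; that is fine, and it also shows where your implied constant picks up the dependence on $\eta$ and the $I^{(i)}$.
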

\begin{proof}
Using the surjectivity of the projection $\SL(m,\cO_F) \rightarrow \SL(m,\F_\gp)$ we can find $A \in \SL(m,\cO_F)$ such that one row of $A\bar X$ is a multiple of $\tilde\pi_\gp$, i.e. has entries in $\sum_i \gp I^{(i)}$. Then the corresponding row of $AX\gamma$ also has entries in $\sum_i \gp I^{(i)}$, but it is guaranteed to be nonzero, since the rows of $AX\gamma$ are independent. Hence, for the matrix $Y \in (\sum_i I^{(i)})^{nm}$ obtained from $AX\gamma$ by dividing that row by $\tilde\pi_\gp$,
\begin{equation*}
H_\infty(X\gamma) = H_\infty(AX\gamma) = N(\gp)H_\infty(Y) \gg_{I^{(1)},\ldots,I^{(m)}} N(\gp)
\end{equation*}
(the latter lower bound is a consequence of the reduction theory --- see e.g. \cite[Lemma 16.2]{B19}). Lemma \ref{lemma:height} now implies that $H_\infty(x_i\gamma) \gg N\gp^{1/m}$ for some $i$, where $x_i$ denotes the $i$-th row of $X$. By Lemma \ref{lemma:ideal_min}, $\rho(x_i\gamma) \in (F \otimes \R)^n$ has length $\gg N\gp^{1/md}$, and thus $\rho(x_i\gamma) \cdot (a_\eta)_\infty$ has length $\gg N\gp^{1/md}$, where the implied constant depends only on $\eta$ and $I^{(1)}, \ldots, I^{(m)}$.
\end{proof}

It follows as an immediate corollary that
\begin{equation*}
f_\infty(N\gp^{-\frac{1}{nd}}(D^\mathrm{tr}X\gamma a_\eta)_\infty) = 0
\end{equation*}
for $N\gp$ sufficiently large (independently of $X$), since $f_\infty$ is compactly supported, as desired.

This leaves those $X$ for which $\bar X$ is independent mod $\gp$. The following lemma is the crux of our proof of Proposition \ref{prop:main}.

\begin{lemma} \label{lemma:crux}
If $\bar X$ is independent mod $\gp$, then the number of $\gamma \in \mathcal M^{(0)}$ such that the vector
\begin{equation} \label{eq:eqdist_vr}
(\sum_{i=1}^{n-1} a_ix_{1i}, \ldots, \sum_{i=1}^{n-1} a_ix_{mi})^\tr \in I^{(1)} \times \cdots \times I^{(m)}
\end{equation}
represents any given coset of $I^{(1)}_{\eta^{-1}}/\gp I^{(1)}_{\eta^{-1}} \times \cdots \times I^{(m)}_{\eta^{-1}}/\gp I^{(m)}_{\eta^{-1}}$ is exactly $N\gp^{n-1-m}$.
\end{lemma}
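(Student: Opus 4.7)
My plan is to prove the lemma by reducing the problem modulo $\gp$ and observing that the given map becomes a surjective $\F_\gp$-linear map with explicit fibers.

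\textbf{Step 1: Canonical identifications modulo $\gp$.} Since $\gp$ is coprime to every $I^{(j)}$ (by condition (ii) on the sequence $\{\gp_i\}$), and since $\eta_\gp \in \cO_\gp^*$ (by condition (iii)), the fractional ideal $I^{(j)}_{\eta^{-1}}$ localized at $\gp$ is exactly $\cO_\gp$. Hence the inclusion $I^{(j)}_{\eta^{-1}} \hookrightarrow \cO_\gp$ (restriction to place $\gp$) induces the canonical $\cO_F$-module isomorphism
\begin{equation*}
I^{(j)}_{\eta^{-1}} / \gp I^{(j)}_{\eta^{-1}} \;\xrightarrow{\sim}\; \cO_\gp / \gp\cO_\gp = \F_\gp.
\end{equation*}
Similarly, $I / \gp I \xrightarrow{\sim} \F_\gp$ via $\iota$, and the map $\mathcal R \to \F_\gp$ is a bijection by construction of $\mathcal R$ in Lemma \ref{lemma:ltg}.

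\textbf{Step 2: Recasting the count as a linear algebra problem over $\F_\gp$.} Under the identifications above, the assignment
\begin{equation*}
(a_1, \ldots, a_{n-1}) \;\longmapsto\; \Bigl(\sum_{i=1}^{n-1} a_i x_{1i}, \ldots, \sum_{i=1}^{n-1} a_i x_{mi}\Bigr)^{\tr} \pmod{\gp}
\end{equation*}
descends to an $\F_\gp$-linear map
\begin{equation*}
T : \F_\gp^{n-1} \;\longrightarrow\; \F_\gp^m, \qquad T(\bar a_1, \ldots, \bar a_{n-1}) = \bigl(\bar X \,\mathrm{mod}\, \gp\bigr)\cdot (\bar a_1, \ldots, \bar a_{n-1})^{\tr},
\end{equation*}
where $\bar X \,\mathrm{mod}\, \gp$ denotes the $m \times (n-1)$ matrix obtained by reducing each entry $x_{ji} \in I^{(j)}$ modulo $\gp I^{(j)}$ and identifying the result with an element of $\F_\gp$ as above. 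The parameter $(a_1, \ldots, a_{n-1}) \in \mathcal R^{n-1}$ corresponds bijectively to $(\bar a_1, \ldots, \bar a_{n-1}) \in \F_\gp^{n-1}$.

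\textbf{Step 3: Applying the independence hypothesis.} The hypothesis that $\bar X$ is independent modulo $\gp$ is precisely the statement that the rows of $\bar X \,\mathrm{mod}\,\gp$ are $\F_\gp$-linearly independent, i.e.\ the matrix has rank $m$. Therefore $T$ is surjective, and every fiber has cardinality $N\gp^{(n-1)-m}$. Translating back through the identifications of Step 1, every coset of $\gp I^{(1)}_{\eta^{-1}} \times \cdots \times \gp I^{(m)}_{\eta^{-1}}$ inside $I^{(1)}_{\eta^{-1}} \times \cdots \times I^{(m)}_{\eta^{-1}}$ that lies in the image (which is all of them) is represented by exactly $N\gp^{n-1-m}$ choices of $\gamma \in \mathcal M^{(0)}$.

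There is no real obstacle here; the only point requiring care is the compatibility of the various reductions modulo $\gp$: one must verify that $a_i x_{ji} \in I^{(j)}$ reduces correctly under $I^{(j)} \to I^{(j)}/\gp I^{(j)} \cong \F_\gp$ as the product $\iota(a_i) \cdot (x_{ji} \,\mathrm{mod}\,\gp)$ in $\F_\gp$, which is immediate from the fact that both identifications come from the same localization at $\gp$.
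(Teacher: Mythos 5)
Your proof takes the same approach as the paper's: reduce modulo $\gp$, identify the relevant modules with $\F_\gp$, and observe that independence of $\bar X \bmod \gp$ makes the induced map $\F_\gp^{n-1} \to \F_\gp^m$ surjective with fibers of size $N\gp^{n-1-m}$. Your three-step organization is clear and correct as far as the counting goes.

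One small point the paper addresses that you omit: for the element $\bigl(\sum_i a_i x_{1i}, \ldots, \sum_i a_i x_{mi}\bigr)^\tr$ to define a coset of $I^{(1)}_{\eta^{-1}}/\gp I^{(1)}_{\eta^{-1}} \times \cdots \times I^{(m)}_{\eta^{-1}}/\gp I^{(m)}_{\eta^{-1}}$, it is not enough to know it lies in $I^{(1)} \times \cdots \times I^{(m)}$ — one must also check membership in $I^{(1)}_{\eta^{-1}} \times \cdots \times I^{(m)}_{\eta^{-1}}$, since $I^{(j)}$ and $I^{(j)}_{\eta^{-1}}$ can differ at finite places $\nu \neq \gp$ where $\ord_\nu \eta < 0$. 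The paper handles this by recalling that $\mathcal R = \delta \mathcal R'$ was constructed in Lemma \ref{lemma:ltg} precisely so that $(\eta a)_\nu$ is integral at every finite $\nu$ for $a \in \mathcal R$, which then forces $a_i x_{ji} \in I^{(j)}_{\eta^{-1}}$. Your Step 1 identification only works at the place $\gp$; it does not by itself certify membership at the other places, so you should add one sentence invoking this property of $\mathcal R$.
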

\begin{proof}
Observe that, since the rows of $\bar X$ are independent mod $\gp$, the map
\begin{equation*}
(a_1, \ldots, a_{n-1})^\tr \mapsto \bar X(a_1, \ldots, a_{n-1})^\tr = (\sum_{i=1}^{n-1} a_ix_{1i}, \ldots, \sum_{i=1}^{n-1} a_ix_{mi})^\tr
\end{equation*}
is a surjective linear map from $\F_\gp^{n-1}$ to $\F_\gp^m$, so each image has $N\gp^{n-1-m}$ preimages. The proof will be complete if we identify the domain with $\mathcal R^{n-1}$ and the codomain with $I^{(1)}_{\eta^{-1}}/\gp I^{(1)}_{\eta^{-1}} \times \cdots \times I^{(m)}_{\eta^{-1}}/\gp I^{(m)}_{\eta^{-1}}$.

Indeed, restricting $\mathcal R$ modulo $\gp$ we obtain every element of $\F_\gp$.  Also, thanks to our choice of $\mathcal R$ in the proof of Lemma \ref{lemma:ltg}, $(\eta a)_\nu$ is integral for any finite $\nu$ and $a \in \mathcal R$, and thus
\begin{equation*}
(\sum_{i=1}^{n-1} a_ix_{1i}, \ldots, \sum_{i=1}^{n-1} a_ix_{mi})^\tr \in I^{(1)}_{\eta^{-1}} \times \cdots \times I^{(m)}_{\eta^{-1}}
\end{equation*}
holds for any $a_1, \ldots, a_{n-1} \in \mathcal R$. Taking \eqref{eq:eqdist_vr} mod $\gp$ completes the proof.
\end{proof}

Lemma \ref{lemma:crux} implies that, as $\gamma$ runs over $\mathcal M^{(0)}$, and $(x_{10}, \ldots, x_{m0})^\tr$ runs over $\prod_i I_{\eta^{-1}}^{(i)}$, the first column of $X\gamma$ (see \eqref{eq:xgamma} above) hits each and every element of $\prod_i I_{\eta^{-1}}^{(i)}$ exactly $N\gp^{n-1-m}$ times. Therefore, combined with the fact that only those $X$ for which $\bar X$ is independent mod $\gp$ contribute, \eqref{eq:he_main} can be rewritten as
\begin{equation*}
\frac{N\gp^{n-1-m}}{\omega_\nu(K_\gp a_\gp K_\gp)} \sum_{X \in (F^n)^m \atop {D^\mathrm{tr}X \in \prod_i I^{(i)}_{\eta^{-1}} \times (I^{(i)})^{n-1} \atop \rk_\gp \bar X = m }} f_\infty(N\gp^{-\frac{1}{nd}}(D^\tr X a_\eta)_\infty).
\end{equation*}
Here $\rk_\gp$ means the rank over $\F_\gp$. Since
\begin{equation*}
\frac{N\gp^{n-1-m}}{\omega_\nu(K_\gp a_\gp K_\gp)} = \frac{1}{N\gp^m}(1 + o(1)),
\end{equation*}
there is no harm in considering
\begin{equation} \label{eq:mainest1}
\frac{1}{N\gp^m} \sum_{X \in (F^n)^m \atop {D^\mathrm{tr}X \in \prod_i  I^{(i)}_{\eta^{-1}} \times (I^{(i)})^{n-1} \atop \rk_\gp \bar X = m }} f_\infty(N\gp^{-\frac{1}{nd}}(D^\tr X a_\eta)_\infty)
\end{equation}
instead. We claim that, up to an error that vanishes as $N\gp \rightarrow \infty$, this is equal to
\begin{equation} \label{eq:mainest2}
\frac{1}{N\gp^m} \sum_{X \in (F^n)^m \atop D^\mathrm{tr}X \in \prod_i  I^{(i)}_{\eta^{-1}} \times (I^{(i)})^{n-1}} f_\infty(N\gp^{-\frac{1}{nd}}(D^\mathrm{tr}X a_\eta)_\infty).
\end{equation}

Let us set aside the claim for now, and explain first how the right-hand side of \eqref{eq:inprop} comes about from \eqref{eq:mainest2}.
\eqref{eq:mainest2} can be computed by rewriting it in the classical language. Applying Lemma \ref{lemma:index} to the support $S \subseteq \A_\f^{nm}$ of $f_\f((D^\tr Xa_\eta)_\f)$, the image under $\rho$ of the elements $X \in (F^n)^m$ such that $D^\tr X \in \prod_i  I^{(i)}_{\eta^{-1}} \times (I^{(i)})^{n-1}$ forms a lattice $L \subseteq (F \otimes \R)^{nm}$ of rank $mnd$ and determinant
\begin{equation*}
|\Delta_F|^{mn/2}\left(\int_{\A_\f^{mn}} f_{\f}((D^\tr X a_\eta)_\f) d\alpha_\f^{mn}\right)^{-1}.
\end{equation*}
Accordingly we write \eqref{eq:mainest2} as
\begin{equation*}
\frac{1}{N\gp^{m}} \sum_{X \in N\gp^{-\frac{1}{nd}}L} f_\infty((D^\tr X a_\eta)_\infty),
\end{equation*}
or more suggestively,
\begin{equation*}
\frac{1}{N\gp^{m}\det (N\gp^{-\frac{1}{nd}}L)} \sum_{X \in N\gp^{-\frac{1}{nd}}L} \det (N\gp^{-\frac{1}{nd}}L)f_\infty((D^\tr X a_\eta)_\infty).
\end{equation*}
Note that $\det (N\gp^{-\frac{1}{nd}}L) = N\gp^{-m}\det L$. This is a Riemann sum, and $f_\infty$ is Riemann integrable, so we are in a situation to appeal to the following well-known principle.
\begin{proposition} \label{prop:riemann}
Suppose $\Lambda \subseteq \R^n$ is a (full-rank) lattice, and $\varphi$ is a Riemann integrable function on $\R^n$. Then
\begin{equation*}
\sum_{x \in \varepsilon \Lambda} \det(\varepsilon \Lambda)\varphi(x) \rightarrow \int \varphi dx
\end{equation*}
as $\varepsilon \rightarrow 0$.
\end{proposition}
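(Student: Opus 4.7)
The plan is to reduce first to the standard lattice $\Lambda = \Z^n$ by a linear change of variables, then derive the conclusion from the defining property of Riemann integrability. Write $\Lambda = A\Z^n$ for some $A \in \GL(n,\R)$, so that $\det(\varepsilon\Lambda) = \varepsilon^n|\det A|$. Substituting $x = \varepsilon A y$, the sum becomes
\[
\sum_{x \in \varepsilon\Lambda} \det(\varepsilon\Lambda)\varphi(x) = \sum_{y \in \Z^n} \varepsilon^n \psi(\varepsilon y),
\]
where $\psi(u) := |\det A|\,\varphi(Au)$. By the usual change of variables, $\int \psi\, du = \int \varphi\, dx$, and $\psi$ is Riemann integrable iff $\varphi$ is. So it suffices to prove the proposition for $\Lambda = \Z^n$, i.e. that $\varepsilon^n \sum_{y \in \Z^n} \psi(\varepsilon y) \to \int \psi\, du$ as $\varepsilon \to 0$ for any Riemann integrable $\psi$.

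For this, the key observation is that when $\psi = \mathbf{1}_B$ for an axis-aligned box $B \subset \R^n$, the quantity $\varepsilon^n \#(\varepsilon\Z^n \cap B)$ converges to $\vol(B) = \int \mathbf{1}_B$ as $\varepsilon \to 0$, since the number of grid points in a box of side lengths $\ell_1,\ldots,\ell_n$ differs from $\ell_1\cdots\ell_n/\varepsilon^n$ by $O(\varepsilon^{-(n-1)})$ (a standard boundary estimate). By linearity the same holds for any finite linear combination of indicators of boxes, i.e. any simple step function on a rational grid.

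Next I would invoke the Darboux-style characterization of Riemann integrability: for any $\delta > 0$ there exist step functions $\psi^{-},\psi^{+}$ (finite $\R$-linear combinations of indicators of boxes) with $\psi^{-} \leq \psi \leq \psi^{+}$ and $\int(\psi^{+} - \psi^{-})\, du < \delta$. Since $\psi$ is compactly supported (as part of being Riemann integrable in our setting, inherited from the assumption that $f_\infty$ is bounded and compactly supported), we may arrange $\psi^{\pm}$ to be compactly supported as well, making all sums involved finite. Applying the box case to $\psi^{\pm}$ gives
\[
\int \psi\, du - \delta \;\leq\; \liminf_{\varepsilon \to 0} \varepsilon^n \!\sum_{y \in \Z^n} \psi(\varepsilon y) \;\leq\; \limsup_{\varepsilon \to 0} \varepsilon^n \!\sum_{y \in \Z^n} \psi(\varepsilon y) \;\leq\; \int \psi\, du + \delta,
\]
and letting $\delta \to 0$ concludes the argument.

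The only genuine subtlety is making sure the boundary contribution in the box count is truly lower order, which is the classical $O(\varepsilon^{-(n-1)})$ estimate; beyond that, the argument is a routine sandwich. Compact support of $\psi$ keeps every sum finite and allows the $\delta$-approximation to be uniform, so there is no issue with swapping limits.
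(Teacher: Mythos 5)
Your proof is correct. The paper itself states Proposition~\ref{prop:riemann} as a ``well-known principle'' and supplies no argument for it, so there is no internal proof to compare against; your three-step strategy (change of variables to reduce to $\Z^n$, the $O(\varepsilon^{-(n-1)})$ boundary count for boxes, and the Darboux sandwich) is exactly the standard textbook route and fills the gap cleanly. One point worth calling out, which you handle correctly: the proposition as literally stated (``$\varphi$ Riemann integrable on $\R^n$'') is false if improper Riemann integrability without decay is allowed --- for instance, a sum of shrinking bumps centered at every integer on $\R^1$ has finite improper integral but the Riemann sum over $\Z$ diverges for every $\varepsilon = 1/m$ --- so the compact support you import from the surrounding context ($f_\infty$ bounded and compactly supported) is a genuinely necessary hypothesis, not a cosmetic one. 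It would be slightly cleaner to state this assumption explicitly at the outset rather than smuggling it in mid-argument, but the substance of the proof is right and every step is accounted for.
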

We conclude that \eqref{eq:mainest2} is equal to, in the $N\gp$ limit,
\begin{align*}
& |\Delta_F|^{-mn/2}\int_{\A_\f^{mn}} f_{\f}((D^\tr X a_\eta)_\f) d\alpha_\f^{mn} \int_{\A_\infty^{mn}}f_\infty((D^\tr X a_\eta)_\infty)d\alpha_\infty^{mn} \\
&= \int_{(\A_F^n)^m} f(D^\tr X) \|\eta\|^{-m} d\alpha_F^{mn} \\
&= \int_{(\A_F^n)^m} f(D^\tr X) d\alpha_F^{mn},
\end{align*}
(recall $\|\eta\|=1$) matching the claim of Proposition \ref{prop:main}.

We return to showing the equivalence of \eqref{eq:mainest1} and \eqref{eq:mainest2} up to a small error. We start by claiming that the condition $\rk_\gp \bar X = m$ in \eqref{eq:mainest1} can be replaced by $\rk \bar X = m$. Since $\rk_\gp \bar X \leq \rk \bar X$ always, it suffices to consider the case $\rk_\gp \bar X < \rk \bar X = m$. But by a similar (shorter) argument as in the proof of Lemma \ref{lemma:nc}, such $\bar X$, and thus $X$ too, has a row whose image via $\rho$ has length $\gg N\gp^{1/md}$, and thus $X$ contributes zero to the sum for $N\gp$ sufficiently large.

Next we claim that the condition $\rk \bar X = m$ may be discarded altogether. To this end, consider those $X$ such that $\bar X$ is dependent. Without loss of generality, we may assume that the last row of $\bar X$ is dependent on the other rows. Then $x_m$, the last row of $X$, lies in the rank $\leq m$ submodule
\begin{equation*}
N =  \left(I^{(m)}_{\eta^{-1}} \times (I^{(m)})^{n-1}\right) \cap (Fe_1 \oplus Fx_1 \oplus \ldots \oplus Fx_{m-1})
\end{equation*}
of $ I^{(m)}_{\eta^{-1}} \times (I^{(m)})^{n-1}$, where $e_1 = (1, 0, \ldots, 0) \in F^n$. The sum \eqref{eq:mainest1} restricted to all such $X$ is crudely bounded by
\begin{equation} \label{eq:error_main1}
\frac{1}{N\gp^m}\sum_{\hat X \in \prod_{i=1}^{m-1} I^{(i)}_{\eta^{-1}} \times (I^{(i)})^{n-1}} \prod_{i \neq m} f_\infty^{(i)} (N\gp^{-\frac{1}{nd}}(\hat Xa_\eta)_\infty) \sum_{x_m \in N} f_\infty^{(m)}(N\gp^{-\frac{1}{nd}}(x_ma_\eta)_\infty).
\end{equation}
The inner sum counts the number of the vectors of the rank $\leq md$ lattice $N\gp^{-\frac{1}{nd}}\rho(N) \cdot (a_\eta)_\infty$ inside a bounded set, and thus by an appropriate adaptation of Proposition \ref{prop:riemann}, it is of size $O_{f,I,\eta}(N\gp^{m/n})$. Similarly, the outer sum is $O_{f, I, \eta}(N\gp^{m-1})$, and thus \eqref{eq:error_main1} is of size $O_{f,I,\eta}(N\gp^{m/n-1}) = o(1)$, as desired.

\subsection{Proof of Proposition \ref{prop:main}: error terms}

To complete the proof of Proposition \ref{prop:main} and thus of Theorem \ref{thm:main}, it remains to estimate the intended error terms
\begin{equation}\label{eq:he_error1}
\frac{1}{\omega_\gp(K_\gp a_\gp K_\gp)} \sum_{\gamma \in \mathcal M^{(j)}}\sum_{X \in (F^n)^m\ \mathrm{indep.} \atop D^\mathrm{tr}X \in \prod_i I^{(i)}_{\eta^{-1}} \times (I^{(i)})^{n-1}} f_\infty(N\gp^{-\frac{1}{nd}}(D^\mathrm{tr}X\gamma a_\eta)_\infty)
\end{equation}
for each $1 \leq j \leq n-1$. Here we will show that \eqref{eq:he_error1} vanishes as $N\gp \rightarrow \infty$.
To this end, it suffices to assume $m=k$, and work with the simpler
\begin{equation} \label{eq:he_error}
\frac{1}{N\gp^{n-1}} \sum_{\gamma \in \mathcal M^{(j)}}\sum_{X \in (J^n)^m \atop \mathrm{indep.}} g_\infty(N\gp^{-\frac{1}{nd}}(X\gamma)_\infty),
\end{equation}
where $J = I_{\eta^{-1}} + \sum_{i=1}^k I^{(i)}$ and $g_\infty(X) = |f_\infty(X(a_\eta)_\infty)|$, since this is no smaller than \eqref{eq:he_error1}, possibly up to a constant factor --- recall that $f_\infty$ is bounded by assumption.

For $\gamma = \gamma(j;a_1,\ldots,a_{n-j-1}) \in \mathcal M^{(j)}$ and
\begin{equation*}
X =
\begin{pmatrix}
x_{10} & x_{11} & \ldots & x_{1,n-1} \\
\vdots  &  \vdots  &          & \vdots  \\
x_{m0} & x_{m1} & \ldots & x_{m,n-1}
\end{pmatrix},
\end{equation*}
$X\gamma$ is of the form
\begin{equation*}
\begin{pmatrix}
x_{10} & \cdots & x_{1,j-1} & x_{1j}\tilde\pi_\gp + \sum_{i=j+1}^{n-1} a_{i-j}x_{1i} & x_{1,j+1} & \cdots & x_{1,n-1} \\
\vdots &            &    \vdots   &   \vdots                                                                      &  \vdots     &            &   \vdots    \\
x_{m0} & \cdots & x_{m,j-1} & x_{mj}\tilde\pi_\gp + \sum_{i=j+1}^{n-1} a_{i-j}x_{mi} & x_{m,j+1} & \cdots & x_{m,n-1}
\end{pmatrix}.
\end{equation*}
This time we let
\begin{equation*}
\bar{X} = \begin{pmatrix}
x_{1, j+1} & \cdots & x_{1,n-1} \\
\vdots &   & \vdots \\
x_{m,j+1} & \cdots & x_{m,n-1}
\end{pmatrix}.
\end{equation*}

We first consider the partial sum of \eqref{eq:he_error} over those $X$ for which $\rk_\gp \bar X = m$.
Then we can repeat the same argument as in the previous section (specifically, Lemma \ref{lemma:crux} and the paragraph following it) to prove that the restricted sum is equal to
\begin{equation*}
\frac{1}{N\gp^{j+m}} \sum_{X \in (J^n)^m\ \mathrm{indep.} \atop \mathrm{rk}_\gp\, \bar X = m } g_\infty(N\gp^{-\frac{1}{nd}}X_\infty);
\end{equation*}
the core fact is that $\bar X$ mod $\gp$ induces a surjective map $\F_\gp^{n-j-1} \rightarrow \F_\gp^m$.
Dropping the rank and independence conditions on $X$, this is a sum over a lattice in an $mnd-$dimensional Euclidean space of determinant $O_{g, J}(N\gp^{-\frac{mnd}{nd}})= O_{g, J}(N\gp^{-m})$. Thus we see that, by Proposition \ref{prop:riemann}, it has size $O_{g, J}(N\gp^{-j})$.

It remains to consider those $X$ for which $\mathrm{rk}_\gp\, \bar X = m' < m$. If $m' = 0$, then $\bar X \in (\gp J)^{(n-j-1)m}$, and \eqref{eq:he_error} restricted to all such $X$ is equal to
\begin{equation*}
\frac{1}{N\gp^{j}} \sum_{X \in J^{jm} \times (\gp J)^{(n-j)m}\ \mathrm{indep.}} g_\infty(N\gp^{-\frac{1}{nd}}X_\infty).
\end{equation*}
As can be seen from the proof of Lemma \ref{lemma:height}, nonzero vectors of $\rho(\gp J)$ has length $\gg_J N\gp^{1/d}$. Hence, for $N\gp$ sufficiently large, this sum is really taken over $J^{jm}$ scaled by $N\gp^{-1/nd}$. By Proposition \ref{prop:riemann}, it follows that it is of size $O_{g,J}(N\gp^{-j(1-m/n)})$.


If $m' \neq 0$, then we divide further into two cases. First suppose that $j < m - m'$. Denoting by $\tilde X$ the $m \times (n-j)$ matrix consisting of the last $n-j$ columns of $X$, and by $\tilde \gamma$ the $(n-j) \times (n-j)$ second block of $\gamma$.
Arguing similarly as in the proof of Lemma \ref{lemma:nc}, we can find $A \in \SL(m,\cO_F)$ such that precisely $m-m'$ rows of $A\tilde X\tilde\gamma$ have all their entries in $\gp J$. However, since $\mathrm{rk}_F\, \tilde X = \mathrm{rk}_F\, \tilde X \tilde\gamma > m'$ is forced, at least one of those rows must be nonzero, and thus again as in the proof of Lemma \ref{lemma:nc}, we may show that for some row $x_i$ of $X$, $\rho(x_i\gamma)$ has length $\gg_J N\gp^{1/md}$. It follows that such $X$ does not contribute to \eqref{eq:he_error}.

It remains to consider the case $j \geq m-m'$, which is a bit more involved than the previous cases. Without loss of generality, let us assume that it is the first $m'$ vectors of $\bar X$ that are independent mod $\gp$. Also let us write
\begin{equation*}
g_1 = \prod_{i=1}^{m'} g_\infty^{(i)}, g_2 = \prod_{i=m'+1}^{m} g_\infty^{(i)},
\end{equation*}
where $g_\infty^{(i)}(X) = f_\infty^{(i)}(Xa_\eta)$. Then the restriction of \eqref{eq:he_error} to all the $X$ with $\rk_\gp \bar X = m'$ is bounded by
\begin{equation*}
\frac{1}{N\gp^{n-1}} \sum_{\gamma \in \mathcal M^{(j)}} \sum_{Y_1 \in (J^{n})^{m'}\ \mathrm{indep}. \atop \rk_\gp \bar Y_1 = m'} g_1(N\gp^{-\frac{1}{nd}}(Y_1\gamma)_\infty) \sum_{Y_2 \in (J^{n})^{m-m'} \atop \bar Y_2\,\mathrm{mod}\,\gp \in \spn_\gp(\bar Y_1)} g_2(N\gp^{-\frac{1}{nd}}(Y_2\gamma)_\infty),
\end{equation*}
where $\bar Y$ denotes the matrix formed by the last $n-j-1$ columns of $Y$, and $\bar Y_2\,\mathrm{mod}\,\gp \in \spn_\gp(\bar Y_1)$ means that each row of $\bar Y_2$ taken mod $\gp$ is contained in the $\F_\gp$-subspace of $\F_\gp^{n-j-1}$ spanned by the rows of $\bar Y_1$. Again using the fact that $\bar Y_1$ induces a surjective mapping $\F_\gp^{n-j-1} \rightarrow \F_\gp^{m'}$ (note we must have $n-j-1 \geq m'$ indeed), and adapting Lemma \ref{lemma:crux} and its subsequent argument, this is equal to
\begin{equation} \label{eq:he_error2}
\frac{1}{N\gp^{j+m'}} \sum_{Y_1 \in (J^{n})^{m'}\ \mathrm{indep}. \atop \rk_\gp \bar Y_1 = m'} g_1(N\gp^{-\frac{1}{nd}}Y_{1,\infty}) \sum_{Y_2 \in J^{j(m-m')} \times \kappa(Y_1)} g_2(N\gp^{-\frac{1}{nd}}Y_{2,\infty}),
\end{equation}
where we write
\begin{equation*}
\kappa(Y) = (\gp J)^{(n-j)(m-m')} + (\spn(\tilde Y) \cap J^{(n-j)})^{(m-m')} \subseteq (F^{(n-j)})^{(m-m')},
\end{equation*}
with $\tilde Y$ being the matrix
\begin{equation*}
\begin{pmatrix}
\sum_{i=j+1}^{n-1} a_{i-j}x_{1i} & x_{1, j+1} & \cdots & x_{1,n-1} \\
\vdots & \vdots &   & \vdots \\
\sum_{i=j+1}^{n-1} a_{i-j}x_{m'i} & x_{m',j+1} & \cdots & x_{m',n-1}
\end{pmatrix},
\end{equation*}
and $\spn(\tilde Y)$ the $F$-span of the rows of $\tilde Y$.

Similarly as in the $m' = 0$ case, as $N\gp$ grows, the sum over $Y_2$ in \eqref{eq:he_error2} is really the sum over a $\cO_F$-module of $F$-rank $(m-m')j + (m-m')\rk_F\,\tilde Y = (m-m')(j+m')$, scaled by $N(\gp)^{-1/nd}$. Therefore, by (two applications of) Proposition \ref{prop:riemann}, \eqref{eq:he_error2} is of the size
\begin{equation*}
O_{g,j}\left(N\gp^{-j-m'+m'+\frac{1}{n}(m-m')(j+m')}\right) \leq O_{g,J}(N\gp^{-\frac{j}{n}}),
\end{equation*}
which vanishes as $N\gp \rightarrow \infty$.

\section{Extensions of Theorem \ref{thm:main}}

\subsection{Primitivity}

For positive integers $m \leq n$ and a finite place $\nu$ of $F$, we say an $m \times n$ matrix $X_\nu$ with entries in $\cO_\nu$ is primitive (at $\nu$) if $X_\nu$ can be completed to an element of $\GL(n,\cO_\nu)$. For $X$ with entries in multiple places, such as $F, \A_\f,$ or $\A_F$, we say $X$ is primitive if it is primitive at each of those places. On the other hand, we say the vectors $x_1, \ldots, x_m \in \cO_\nu^n$ span a primitive lattice (in $\cO_\nu^n$) if their $\cO_\nu$-span coincides with the intersection of $\cO_\nu^n$ and their $F_\nu$-span; for vectors with entries in multiple places, we extend the definition in the same manner as earlier. The two concepts can be easily seen to coincide:
\begin{lemma}\label{lemma:prim_mat}
Let $m \leq n$ be positive integers, and $\nu \in \f$. $X \in \mathrm{Mat}_{m \times n}(\cO_\nu)$ is primitive at $\nu$ if and only if its row vectors $x_1, \ldots, x_m$ span a primitive lattice.
\end{lemma}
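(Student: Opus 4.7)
The lemma is essentially a statement about modules over the discrete valuation ring $\cO_\nu$, so both directions should follow from the structure theory of finitely generated $\cO_\nu$-modules. Let me sketch the two implications.

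For the forward direction, suppose $X$ extends to $G \in \GL(n,\cO_\nu)$, with the remaining rows being $x_{m+1}, \ldots, x_n \in \cO_\nu^n$. Since $G$ is invertible over $\cO_\nu$, the rows $x_1, \ldots, x_n$ form an $\cO_\nu$-basis of $\cO_\nu^n$, so in particular $\cO_\nu^n = \langle x_1, \ldots, x_m\rangle_{\cO_\nu} \oplus \langle x_{m+1}, \ldots, x_n\rangle_{\cO_\nu}$, and the $x_i$ are $F_\nu$-linearly independent. Take any $v \in \cO_\nu^n$ lying in the $F_\nu$-span of $x_1, \ldots, x_m$, and write $v = u + w$ under this direct sum decomposition. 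Extending to $F_\nu$-spans, $w$ must lie in the $F_\nu$-span of $x_{m+1},\ldots,x_n$; by linear independence $w = 0$, so $v \in \langle x_1, \ldots, x_m\rangle_{\cO_\nu}$. Thus the row vectors span a primitive lattice.

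For the reverse direction, suppose $L := \langle x_1, \ldots, x_m\rangle_{\cO_\nu}$ is primitive, i.e. $L = \cO_\nu^n \cap (F_\nu\text{-span of } x_1,\ldots,x_m)$. I claim $\cO_\nu^n / L$ is torsion-free: if $a \in \cO_\nu \setminus \{0\}$ and $y \in \cO_\nu^n$ satisfy $ay \in L$, then $y$ lies in the $F_\nu$-span of $x_1,\ldots,x_m$, and primitivity forces $y \in L$. Hence $\cO_\nu^n / L$ is a finitely generated torsion-free module over the PID $\cO_\nu$, therefore free; tensoring with $F_\nu$ shows its rank is $n-m$. Lifting any $\cO_\nu$-basis of $\cO_\nu^n/L$ to vectors $y_1, \ldots, y_{n-m} \in \cO_\nu^n$, the list $x_1, \ldots, x_m, y_1, \ldots, y_{n-m}$ is an $\cO_\nu$-basis of $\cO_\nu^n$, so stacking these as rows gives a matrix in $\GL(n,\cO_\nu)$ completing $X$.

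There is no real obstacle here beyond invoking that $\cO_\nu$ is a PID (equivalently a DVR) and the structure theorem for finitely generated modules over a PID. The only point worth double-checking is the torsion-free verification in the reverse direction, which is where the primitivity hypothesis is actually used. Everything else is bookkeeping.
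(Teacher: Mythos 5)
Your proof is correct, and while the forward direction is essentially the same argument as the paper's (you realize the $\cO_\nu$-integrality of the coefficients via a direct-sum decomposition coming from the completed matrix, whereas the paper right-multiplies by $\tilde X^{-1}$ — these are two ways of saying the same thing), the reverse direction takes a genuinely different route. The paper first asserts that a primitive lattice can be completed to an $\cO_\nu$-basis of $\cO_\nu^n$ by adding $n-m$ \emph{elementary} vectors, and then verifies that the resulting matrix lies in $\GL(n,\cO_\nu)$ by explicitly constructing a left inverse $Q$; the existence of the elementary-vector completion is not really justified there. You instead show directly that the quotient $\cO_\nu^n / L$ is torsion-free, hence free of rank $n-m$ over the DVR $\cO_\nu$, and lift a basis. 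This is cleaner and actually fills the gap: the splitting of $0 \to L \to \cO_\nu^n \to \cO_\nu^n/L \to 0$ produced by freeness is exactly what makes any lift (elementary or otherwise) work. Both approaches implicitly assume the rows $x_1,\ldots,x_m$ are $F_\nu$-linearly independent — strictly speaking the paper's definition of ``span a primitive lattice'' does not force this, so the lemma as stated is slightly imprecise — but that assumption is present in the paper's argument too and is clear from context, so your proof matches the intended level of rigor.
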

\begin{proof}
Suppose first that $X$ is primitive. Let $\tilde X \in \GL(n,\cO_\nu)$ be its completion. Any $v \in \cO_\nu^n \cap \spn_{F_\nu}(x_1,\ldots,x_m)$ has an expression $v = c_1x_1 + \ldots c_mx_m$ for $c_i \in F_\nu$. Notice that $v\tilde X^{-1} = (c_1, \ldots, c_m, 0, \ldots, 0) \in \cO_\nu^n$; this proves that the $x_i$'s span a primitive lattice.

Suppose conversely that the $x_i$'s span a primitive lattice. We can complete it to a $\cO_\nu$-basis of $\cO_\nu^n$ by adding $n-m$ elementary vectors, that is, vectors of the form $e_i \in \cO_\nu^n$, whose $i$-th entry is $1$ and the rest are zero. Without loss of generality, we may assume we can choose to add $e_{m+1}, \ldots, e_n$. Accordingly, we claim that the matrix
\begin{equation*}
P =
\begin{pmatrix}
  & - & x_1 & - &  \\
  &  & \vdots  &  &  \\
  & -  & x_m  & - &  \\
  & -  & e_{m+1}  & - &  \\
  &   & \vdots  &  &  \\
  & -  & e_n  & - &  \\
\end{pmatrix}
\end{equation*}
has unit determinant. It suffices to demonstrate $Q \in \mathrm{Mat}_{n \times n}(\cO_\nu)$ such that $QP = \mathrm{Id}_n$. Thanks to the assumptions, for any $1 \leq i \leq n$ we can write $e_i = \sum_{j\leq m} c_{ij}x_j + \sum_{j>m} c_{ij}e_j$ for some $c_{ij} \in \cO_\nu$; we take $Q = (c_{ij})_{1 \leq i,j \leq n}$.
\end{proof}

In case $F = \Q$ for example, our notion of primitivity coincides with the usual notion of primitivity for $X \in \mathrm{Mat}_{m \times n}(\Z)$, which is that it can be completed to an element of $\GL(n,\Z)$. The direction (usual notion) $\Rightarrow$ (our notion) is clear. For the other direction: if $X$ cannot be completed to an element of $\GL(n,\Z)$, then $\rk_{\F_p} X < m$ for some prime $p$, and hence there cannot exist an element of $\GL(n,\Z_p)$ completing $X_p$.

By the theory of the Smith normal form, each $X \in \mathrm{Mat}_{m \times n}(\cO_\nu)$ is of the form
\begin{equation*}
X = \gamma \cdot \mathrm{diag}(\pi_\nu^{a_1}, \ldots, \pi_\nu^{a_m}) \cdot P,
\end{equation*}
where $\gamma \in \GL(m, \cO_\nu)$, $P \in \mathrm{Mat}_{m \times n}(\cO_\nu)$ is primitive, and $0 \leq a_1 \leq \ldots \leq a_n$, where the $a_i$'s are determined uniquely by $X$. We allow $a_i$ to be $\infty$, interpreting $\pi_\nu^\infty = 0$. If any $a_i = \infty$, then the row vectors of $X$ are linearly dependent over $F_\nu$, and the converse also holds. On the other end of the spectrum, $a_1 = \ldots = a_m = 0$ if and only if $X$ is primitive. Moreover, we have the following lemma.

\begin{lemma}\label{lemma:repr}
Let $X \in \mathrm{Mat}_{m \times n}(\cO_\nu)$. Then we can write
\begin{equation*}
X = hP,
\end{equation*}
where  $P \in \mathrm{Mat}_{m \times n}(\cO_\nu)$ is primitive, and $h \in K_\nu\,\mathrm{diag}(\pi_\nu^{a_1}, \ldots, \pi_\nu^{a_m}) K_\nu$ for some $0 \leq a_1 \leq \ldots \leq a_m$. Furthermore, the coset $h K_\nu$ is uniquely determined by $X$.
\end{lemma}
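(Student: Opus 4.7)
\textbf{Existence} follows immediately from the Smith normal form over the DVR $\cO_\nu$. Writing $X = U D' V$ with $U \in K_\nu$, $V \in \GL(n,\cO_\nu)$, and $D'$ the $m \times n$ diagonal matrix with entries $\pi_\nu^{a_i}$ ordered $0 \leq a_1 \leq \ldots \leq a_m$ (allowing $\pi_\nu^{\infty} := 0$ when the rows of $X$ are not $F_\nu$-independent), set $D := \diag(\pi_\nu^{a_1},\ldots,\pi_\nu^{a_m})$, $h := UD$, and $P := (I_m \mid 0)V$. Then $P$ is primitive by Lemma \ref{lemma:prim_mat} (being the first $m$ rows of the invertible $V$), and $h \in K_\nu D \subseteq K_\nu D K_\nu$. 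The $a_i$'s are the elementary divisors of $X$, hence intrinsic to $X$, so in particular the diagonal $D$ is already determined.

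\textbf{Uniqueness of the coset $h K_\nu$.} Suppose $X = h_1 P_1 = h_2 P_2$ are two such decompositions. Completing each $P_i$ to $\tilde P_i \in \GL(n,\cO_\nu)$ via Lemma \ref{lemma:prim_mat} gives $P_i = (I_m \mid 0) \tilde P_i$, whence $X \tilde P_i^{-1} = (h_i \mid 0)$. Setting $M := \tilde P_1 \tilde P_2^{-1} \in \GL(n,\cO_\nu)$ and letting $A$ be its top-left $m \times m$ block, the identity $(h_1 \mid 0) M = X \tilde P_2^{-1} = (h_2 \mid 0)$ yields $h_1 A = h_2$; by symmetry one also obtains some $A' \in \mathrm{Mat}_{m \times m}(\cO_\nu)$ with $h_2 A' = h_1$. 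It remains to correct $A$ by an element $N$ of the right annihilator $\{N \in \mathrm{Mat}_{m \times m}(\cO_\nu) : h_1 N = 0\}$ so that $A + N$ lies in $K_\nu = \GL(m,\cO_\nu)$; then $k := A + N$ satisfies $h_1 k = h_2$, whence $h_1 K_\nu = h_2 K_\nu$.

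\textbf{The key input} is that the column spans coincide: $h_1(\cO_\nu^m) = X(\cO_\nu^n) = h_2(\cO_\nu^m)$, using that each primitive $P_i$ is a surjection $\cO_\nu^n \twoheadrightarrow \cO_\nu^m$ (immediate from the completion $\tilde P_i$). Reducing modulo $\pi_\nu$, this together with $\bar h_1 \bar A = \bar h_2$ forces $\mathrm{colspan}(\bar A) + \ker(\bar h_1) = \F_\nu^m$, and a short linear-algebra argument then produces $\bar N \in \mathrm{Mat}_{m \times m}(\F_\nu)$ with columns in $\ker(\bar h_1)$ such that $\bar A + \bar N \in \GL(m,\F_\nu)$. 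To lift $\bar N$ to an exact $N$ in the right annihilator, note that writing $h_1 = u D v$ with $u, v \in K_\nu$ identifies the right annihilator with $v^{-1}$ times the coordinate submodule $\{w \in \cO_\nu^m : w_i = 0 \text{ whenever } a_i < \infty\}$, which visibly reduces surjectively onto $\ker(\bar h_1)$. The main obstacle is the singular case where some $a_i = \infty$, so $h_1, h_2$ have rank $< m$: in the full-rank case the column-span equality forces $A \in K_\nu$ directly with no adjustment needed, but in the singular case the kernel-adjustment step above is essential, and it is the compatibility of the mod-$\pi_\nu$ reduction with the exact lifting that makes the argument go through.
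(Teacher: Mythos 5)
Your approach is genuinely different from the paper's, and the comparison is instructive. The paper's key trick, which you do not use, is to complete $P_1$ and $P_2$ to $\tilde P_1,\tilde P_2\in\GL(n,\cO_\nu)$ whose last $n-m$ rows are \emph{identical} (this is possible because, in the full-rank case, the rows of $P_1$ and $P_2$ span the same $\cO_\nu$-submodule, by the argument of Lemma \ref{lemma:prim_mat}). With that choice, $\tilde P_1\tilde P_2^{-1}$ is automatically block upper triangular with identity bottom-right block, so its top-left $m\times m$ block already lies in $K_\nu$ and the uniqueness is immediate. You instead allow an arbitrary completion, so that $A$ is merely an $m\times m$ block of some invertible $M$, and then have to argue that $A$ can be corrected to an element of $K_\nu$. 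This is correct in the full-rank case (there $A'$ is an exact two-sided integral inverse of $A$), but is more work than the paper's route. In short: correct existence and correct full-rank uniqueness, but by a longer path.

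There is a genuine gap in your treatment of the singular case. You claim that the right annihilator $\{N : h_1N = 0\}$, identified (after conjugating by $v$) with matrices whose columns lie in $\{w\in\cO_\nu^m : w_i=0 \text{ whenever } a_i<\infty\}$, ``visibly reduces surjectively onto $\ker(\bar h_1)$.'' This is false whenever some $a_i$ satisfies $0<a_i<\infty$: the reduction mod $\pi_\nu$ of the right annihilator's column space is $\bar v^{-1}\cdot\operatorname{span}\{e_i : a_i=\infty\}$, whereas $\ker(\bar h_1)=\bar v^{-1}\cdot\operatorname{span}\{e_i : a_i\geq 1\}$. The former is a proper subspace of the latter whenever an index has $1\leq a_i<\infty$, so the $\bar N$ you produce (with columns only constrained to lie in $\ker(\bar h_1)$) need not lift to the right annihilator, and the correction step collapses. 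Fixing this requires showing the stronger fact that $\mathrm{Im}(\bar A)$ together with the (smaller) mod-$\pi_\nu$ image of the right annihilator already spans $\F_\nu^m$; this does hold, but it requires using the column-span equality $h_1(\cO_\nu^m)=h_2(\cO_\nu^m)$ as an equality of $\cO_\nu$-lattices (not just its mod-$\pi_\nu$ shadow), via the stabilizer structure of $\{z : z_i\in\pi_\nu^{a_i}\cO_\nu\}$ under $u_2^{-1}u_1\in K_\nu$. As written, your argument does not establish this, so the singular case of the uniqueness claim is not actually proved. (For context, the paper's own proof also silently restricts to $a_i<\infty$, since it needs $\tilde h,\tilde h'$ invertible; the applications of this lemma in the paper only invoke it for $X$ of full rank over $F_\nu$.)
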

\begin{proof}
The existence of such $P$ and $h$ is clear from the preceding discussion. For the uniqueness claim, suppose $hP = h'P'$
for some primitive $P,P'$ and $h, h' \in K_\nu \mathrm{diag}(\pi_\nu^{a_1}, \ldots, \pi_\nu^{a_m}) K_\nu$. Since the rows of $P$ and $P'$ have the same $F_\nu$-span and therefore the same $\cO_\nu$-span, we can actually complete them into elements $\tilde P, \tilde P'$ of $\GL(n, \cO_\nu)$ whose last $n-m$ rows are identical (cf. proof of Lemma \ref{lemma:prim_mat} above).

Let $\tilde h$ (resp. $\tilde h'$) be the block matrix whose first $m \times m$ block is $h$ (resp. $h'$), and the next (and last) $(n-m) \times (n-m)$ block is the identity matrix. We have $\tilde h, \tilde h' \in \GL(n, \cO_\nu)$, and also
\begin{equation*}
\tilde h \tilde P = \tilde h' \tilde P',
\end{equation*}
from which it is immediate that $h^{-1}h' \in K_\nu$ and thus $hK_\nu = h'K_\nu$.
\end{proof}

In this section, we use the Hecke operators $\mathcal T$ to implement an inclusion-exclusion argument that expands the family of functions for which Theorem \ref{thm:main} applies. The propositions below serve as the starting point.

\begin{proposition}\label{prop:indprim}
Choose $\mathbf s \subseteq \f$. Let $f_\mathbf s$ be the characteristic function of $\prod_{\nu \in \mathbf s} (\cO_\nu^n)^m$, and $f_{\mathrm{pr},\mathbf s}$ be the characteristic function of the set of primitive $m \times n$ matrices in $\prod_{\nu \in \mathbf s} (\cO_\nu^n)^m$. Then
\begin{equation*}
f_\mathbf s(X) = \sum_I \mathcal T(I)f_{\mathrm{pr},\mathbf s}(X),
\end{equation*}
where $I$ runs over all nonzero ideals generated by $\mathbf s$.

Conversely, for $0 \leq i \leq m$ and $\nu \in \f$, let
\begin{equation*}
\mathcal T^{(i)}(\nu) = \mathcal T(\underbrace{1, \cdots, 1}_{m-i}, \underbrace{\nu, \cdots, \nu}_{i}).
\end{equation*}
Then
\begin{equation*}
f_\mathrm{pr,\mathbf s}(X) = \prod_{\nu \in \mathbf s} \left(\sum_{i=0}^m (-1)^iN(\nu)^{i(i-1)/2}\mathcal T^{(i)}(\nu) \right)f_\mathbf s(X).
\end{equation*}

\end{proposition}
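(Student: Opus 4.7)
The plan is to prove both parts locally, at each finite place $\nu \in \mathbf{s}$ separately (since each side factors as a product over $\nu$), combining Lemma~\ref{lemma:repr} with a count of sublattices of $\cO_\nu^m$ and a $q$-binomial identity. Throughout I use the normalization $\omega_\nu(K_\nu) = 1$, standard for the Hecke algebra.

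For Part~1, the local statement at each $\nu$ is
\begin{equation*}
\chi_{(\cO_\nu^n)^m}(X) = \sum_{0 \leq a_1 \leq \ldots \leq a_m} \mathcal{T}(\nu^{a_1}, \ldots, \nu^{a_m}) f_{\mathrm{pr},\nu}(X).
\end{equation*}
If $X \notin \mathrm{Mat}_{m \times n}(\cO_\nu)$, then neither is $\gamma^{-1} X$ for any $\gamma$ in an integral double coset $K_\nu\,\mathrm{diag}(\pi_\nu^{a_j})K_\nu$ (since left multiplication by an integral matrix preserves integrality), so $f_{\mathrm{pr},\nu}(\gamma^{-1}X) = 0$ and both sides vanish. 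If $X$ is integral, Lemma~\ref{lemma:repr} produces a unique coset $hK_\nu$ with $X = hP$, $P$ primitive, lying in a unique double coset of type $(a_1, \ldots, a_m)$ determined by $X$. Then $\mathcal{T}(\nu^{b_1}, \ldots, \nu^{b_m}) f_{\mathrm{pr},\nu}(X)$ picks up the single contribution $1$ from $\gamma K_\nu = hK_\nu$ precisely when $(b_j) = (a_j)$, and is zero otherwise. Taking the product over $\nu \in \mathbf{s}$ yields Part~1.

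For Part~2, the local statement is
\begin{equation*}
f_{\mathrm{pr},\nu}(X) = \sum_{i=0}^{m}(-1)^i q^{i(i-1)/2}\,\mathcal{T}^{(i)}(\nu) f_{\nu}(X), \qquad q := N(\nu),
\end{equation*}
and again the non-integral case is trivial. For integral $X$, decompose $X = hP$ via Lemma~\ref{lemma:repr} with Smith type $(a_1, \ldots, a_m)$, and set $m' := |\{j : a_j \geq 1\}|$, so that $X$ is primitive at $\nu$ iff $m' = 0$. Using that $P$ extends to an element of $\GL(n, \cO_\nu)$, one verifies that $\gamma^{-1} X \in \mathrm{Mat}_{m \times n}(\cO_\nu)$ iff $\gamma^{-1} h \in \mathrm{Mat}_{m \times m}(\cO_\nu)$ iff $h\cO_\nu^m \subseteq \gamma\cO_\nu^m$ (viewing $h, \gamma$ as acting on column vectors). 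Thus $\mathcal{T}^{(i)}(\nu) f_\nu(X)$ counts sublattices $L \subseteq \cO_\nu^m$ with $\cO_\nu^m / L \cong \F_\nu^i$ and $h\cO_\nu^m \subseteq L$. Writing $M := \cO_\nu^m / h\cO_\nu^m \cong \bigoplus_j \cO_\nu/\pi_\nu^{a_j}$, such sublattices correspond bijectively to codimension-$i$ subspaces of $M/\pi_\nu M \cong \F_\nu^{m'}$, so the count equals the Gaussian binomial $\binom{m'}{i}_q$.

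Substituting this count into the right side, the identity reduces to showing that $\sum_{i=0}^{m'}(-1)^i q^{i(i-1)/2}\binom{m'}{i}_q$ equals $1$ for $m' = 0$ and $0$ for $m' \geq 1$. This follows immediately from the $q$-binomial theorem
\begin{equation*}
\prod_{k=0}^{m'-1}(1 + q^k z) = \sum_{i=0}^{m'} q^{i(i-1)/2} \binom{m'}{i}_q z^i
\end{equation*}
by setting $z = -1$: the left side becomes $\prod_{k=0}^{m'-1}(1 - q^k)$, which vanishes for $m' \geq 1$ (since the $k = 0$ factor is zero) and is the empty product $1$ for $m' = 0$, precisely matching $f_{\mathrm{pr},\nu}(X)$. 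The main technical step is the sublattice-counting identification of $\mathcal{T}^{(i)}(\nu) f_\nu(X)$ as a Gaussian binomial; once this is in hand, the $q$-binomial identity finishes the proof at once.
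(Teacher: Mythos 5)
Your proof is correct. For Part~1 your argument coincides with the paper's: Lemma~\ref{lemma:repr} writes $X = hP$ with $P$ primitive and the coset $hK_\nu$ unique, so exactly one term of the sum contributes~$1$. For Part~2 you take a genuinely different route. The paper appeals to \cite[Theorem 3.21]{Shi71} (due to Tamagawa), which asserts that $\sum_{i=0}^m(-1)^iN(\nu)^{i(i-1)/2}\mathcal T^{(i)}(\nu)$ and $\sum_{k\geq 0}\mathcal T(\nu^k)$ are mutually inverse in the Hecke ring, and deduces the second identity by inverting the first. You instead prove the local identity directly: the reduction ($\gamma^{-1}X$ integral $\Leftrightarrow h\cO_\nu^m\subseteq\gamma\cO_\nu^m$) identifies $\mathcal T^{(i)}(\nu)f_\nu(X)$ with the Gaussian binomial coefficient counting codimension-$i$ subspaces of $(\cO_\nu^m/h\cO_\nu^m)/\pi_\nu(\cO_\nu^m/h\cO_\nu^m)$, a vector space of dimension $m'$, and the alternating sum then collapses via the $q$-binomial theorem at $z=-1$ to the indicator of $m'=0$, which is exactly $f_{\mathrm{pr},\nu}(X)$. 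In effect you give a self-contained proof of the Hecke-ring inversion in the special case needed; this buys independence from the citation at the modest cost of the sublattice count and a standard $q$-series identity. There is no gap in the argument.
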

\begin{proof}
The former equality is an immediate consequence of Lemma \ref{lemma:repr}. The latter follows from \cite[Theorem 3.21]{Shi71} --- originally due to Tamagawa (\cite{Tam63}) --- which states that
\begin{equation*}
\sum_{i=0}^m (-1)^iN(\nu)^{i(i-1)/2}\mathcal T^{(i)}(\nu) \mbox{ and } \sum_{i=0}^\infty \mathcal T(\nu^i)
\end{equation*}
are inverse operators to one another.
\end{proof}

\begin{proposition}\label{prop:prim_expn}
Choose $\mathbf s \subseteq \f$. Let $f = f_{\f} f_\infty$ be a function $(\A_F^n)^m \rightarrow \R$ such that $f_\infty$ is an integrable function on $(\A_\infty^n)^m$, and $f_\f$ is the characteristic function of $\prod_{\nu \in \f} (\cO_\nu^n)^m$. Also, let $f_\mathrm{pr} = f_{\mathrm{pr}, \mathbf s} f_\infty$, where $f_{\mathrm{pr}, \mathbf s}$ is the restriction of $f_\f$ to those $X$ such that $X_\nu$ is primitive for $\nu \in \mathbf s$. Choose any $g \in \GL(n,\A_F)$. Then
\begin{equation*}
\sum_{X \in (F^n)^m \atop \mathrm{indep.}} f(Xg) = \sum_{X \in (F^n)^m \atop \mathrm{indep.}} \sum_I \mathcal T(I) f_\mathrm{pr}(Xg),
\end{equation*}
where $I$ runs over all nonzero integral ideals generated by $\mathbf s$. Moreover,
\begin{equation*}
\sum_{X \in (F^n)^m \atop \mathrm{indep.}} f_\mathrm{pr}(Xg) = \sum_{X \in (F^n)^m \atop \mathrm{indep.}} \prod_{\nu \in \mathbf s} \left(\sum_{i=0}^m (-1)^iN(\nu)^{i(i-1)/2}\mathcal T^{(i)}(\nu) \right)f(Xg).
\end{equation*}
\end{proposition}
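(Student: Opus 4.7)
The plan is to lift the two identities of Proposition~\ref{prop:indprim}, which are pointwise identities on $(\A_\f^n)^m$ between the finite-adelic characteristic functions, to the full functions $f$ and $f_\mathrm{pr}$, and then specialize to $Y = Xg$ for diagonally embedded independent $X \in (F^n)^m$ and sum.

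First I would factor $f$ and $f_\mathrm{pr}$ according to the three disjoint place-sets: the infinite places, the finite places in $\mathbf{s}$, and the remaining finite places. In the notation of Proposition~\ref{prop:indprim} this gives $f = f_\infty \cdot f_{\mathbf{s}} \cdot f_{\f\setminus\mathbf{s}}$ and $f_\mathrm{pr} = f_\infty \cdot f_{\mathrm{pr},\mathbf{s}} \cdot f_{\f\setminus\mathbf{s}}$, where $f_{\f\setminus\mathbf{s}} = \prod_{\nu \in \f\setminus\mathbf{s}}\mathbf{1}_{(\cO_\nu^n)^m}$. By construction, every Hecke operator $\mathcal T(I)$ with $I$ generated by $\mathbf{s}$ is the tensor product of nontrivial factors at places of $\mathbf{s}$ with the identity operator at every other place; in particular, $\mathcal T(I)$ commutes with multiplication by any function depending only on the coordinates outside $\mathbf{s}$, so it commutes with multiplication by $f_\infty \cdot f_{\f\setminus\mathbf{s}}$. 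Consequently, applying the first identity of Proposition~\ref{prop:indprim} to the $\mathbf{s}$-part yields
\begin{equation*}
\sum_I \mathcal T(I) f_\mathrm{pr}(Y) = f_\infty(Y_\infty)\, f_{\f\setminus\mathbf{s}}(Y_{\f\setminus\mathbf{s}}) \sum_I \mathcal T(I) f_{\mathrm{pr},\mathbf{s}}(Y_\mathbf{s}) = f(Y)
\end{equation*}
for every $Y \in (\A_F^n)^m$, and similarly the inverse identity of Proposition~\ref{prop:indprim} gives
\begin{equation*}
\prod_{\nu \in \mathbf{s}}\Bigl(\sum_{i=0}^m (-1)^i N(\nu)^{i(i-1)/2}\,\mathcal T^{(i)}(\nu)\Bigr) f(Y) = f_\mathrm{pr}(Y).
\end{equation*}

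Next I would observe that each $\mathcal T(I)$ acts on its argument by left multiplication by $\gamma^{-1}$ at places of $\mathbf{s}$, which commutes with right multiplication by any $g \in \GL(n,\A_F)$. Hence both pointwise identities above remain valid after the substitution $Y \mapsto Xg$, and summing over independent $X \in (F^n)^m$ produces the two claimed formulas. There are no convergence subtleties in the first identity: by Lemma~\ref{lemma:repr}, the Smith elementary divisors of $(Xg)_\nu$ at each $\nu \in \mathbf{s}$ (when it lies in $(\cO_\nu^n)^m$ at all) are uniquely determined, so at most one ideal $I$ contributes a nonzero term to $\sum_I \mathcal T(I) f_\mathrm{pr}(Xg)$, and the outer sum over $I$ is effectively finite for each $X$.

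\textbf{Main obstacle.} Honestly, the proposition is essentially a direct consequence of Proposition~\ref{prop:indprim} combined with the two commutation observations above, so there is no substantial analytic or combinatorial obstacle --- only careful bookkeeping. The one point that deserves an explicit check is the assertion that the Hecke operators appearing in Proposition~\ref{prop:indprim} commute through multiplication by $f_\infty \cdot f_{\f\setminus\mathbf{s}}$, which is immediate once one unpacks the definition of $\mathcal T(I)$ in Section~2.5 as a tensor product over finite places whose factor at $\nu \notin \mathbf{s}$ is $\mathcal T(\nu^0,\ldots,\nu^0) = \mathrm{Id}$.
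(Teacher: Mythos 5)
Your proposal is correct and takes essentially the same approach as the paper, which disposes of both identities in a single sentence by citing Proposition~\ref{prop:indprim} and summing over independent $X$. You simply fill in the bookkeeping that the paper leaves implicit: that the Hecke operators supported on $\mathbf{s}$ commute with multiplication by $f_\infty f_{\f\setminus\mathbf{s}}$, that left multiplication by $\gamma^{-1}$ commutes with right multiplication by $g$, and that the outer sum over $I$ is termwise finite because Lemma~\ref{lemma:repr} pins down the Smith data of $(Xg)_\nu$ uniquely.
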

\begin{proof}
Both follow from Proposition \ref{prop:indprim} by summing over all $Xg$ such that $X \in (F^n)^m$ is linearly independent.
\end{proof}

\subsection{Rogers integral formula, primitive case}

We now prove the main results of this section.

\begin{theorem}\label{thm:meanhecke}
Let $f = f_\f f_\infty$, where $f_\infty$ is Riemann integrable on $(\A_\infty^n)^m$ that is bounded and compactly supported, and $f_\f$ is the characteristic function of $\prod_{\nu \in \f} (\cO_\nu^n)^m$. Also, let $h \in \GL(m,\A_\f)$ be such that $h_\nu$ has entries in $\cO_\nu$, and $h_\nu = \mathrm{Id}$ for all but finitely many $\nu \in \f$. Then
\begin{equation*}
\int_{\mathcal X_n} \sum_{X \in (F^n)^m \atop \mathrm{indep.}} f(h^{-1}Xg) d\mu_n = \int_{(\A_F^n)^m} f(h^{-1}X) d\alpha_F^{nm}.
\end{equation*}
As a consequence, we have
\begin{equation*}
\int_{\mathcal X_n} \sum_{X \in (F^n)^m \atop \mathrm{indep.}} \mathcal Tf(Xg) d\mu_n = \int_{(\A_F^n)^m} \mathcal Tf(X) d\alpha_F^{nm}
\end{equation*}
for any element $\mathcal T$ in the Hecke ring.
\end{theorem}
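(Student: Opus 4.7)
The plan is to reduce the first identity to the case $k=m$, $D=\mathrm{Id}_m$ of Theorem~\ref{thm:main} via Smith normal form combined with strong approximation for $\SL_m$, and then to deduce the Hecke-ring identity by linearity. The idea is that $h$ is trivial at infinity, so up to a controllable adjustment coming from $\GL(m,F)$, it can be ``diagonalized,'' turning the summand $f(h^{-1}Xg)$ into a product to which Theorem~\ref{thm:main} applies directly.

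First, I apply Smith normal form over the PID $\cO_\nu$ at each finite place to write $h_\nu = u_\nu \delta_\nu v_\nu$ with $u_\nu, v_\nu \in K_\nu = \GL(m,\cO_\nu)$ and $\delta_\nu = \diag(\pi_\nu^{a_{1,\nu}},\ldots,\pi_\nu^{a_{m,\nu}})$; absorbing the unit determinants of $u_\nu,v_\nu$ into $\delta_\nu$, I may take $u_\nu,v_\nu \in \SL(m,\cO_\nu)$. Assembled across $\f$, this yields $h = u\delta v$ with $u,v \in \SL(m,\cO_\f)$ and $\delta \in \GL(m,\A_\f)$ diagonal with integral entries. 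Since $f_\f = \mathbf{1}_{(\cO_\f^n)^m}$ is invariant under left $\GL(m,\cO_\f)$-action on $m \times n$ matrices, the $v^{-1}$ factor in $h^{-1}$ can be absorbed, giving
\begin{equation*}
\sum_{X} f(h^{-1}Xg) = \sum_{X} f_\infty(X_\infty g_\infty)\,\mathbf{1}_{\delta\cO_\f^{nm}}(u^{-1}X_\f g_\f).
\end{equation*}
Next, strong approximation for $\SL_m$ provides $\gamma \in \SL(m,F)$ satisfying $u^{-1}\gamma \equiv \mathrm{Id}_m \pmod{\pi_\nu^{A_\nu}}$ at each of the finitely many $\nu$ where $\delta_\nu \neq \mathrm{Id}$, where $A_\nu := \max_i a_{i,\nu} - \min_j a_{j,\nu}$; an entrywise check then shows $\delta^{-1}(u^{-1}\gamma)\delta \in \GL(m,\cO_\f)$, so $u^{-1}\gamma$ normalizes the open compact subgroup $\delta\cO_\f^{nm} \subseteq \A_\f^{nm}$. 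Substituting $X \mapsto \gamma X$ (a bijection on $F$-independent $m$-tuples) transforms the sum into $\sum_X F(Xg)$ with $F(Y) := f_\infty(\gamma Y_\infty)\,\mathbf{1}_{\delta\cO_\f^{nm}}(Y_\f)$. The finite part of $F$ factors as $\prod_{i,\nu}\mathbf{1}_{\pi_\nu^{a_{i,\nu}}\cO_\nu^n}$, matching the form required by $\mathcal S^m$; approximating $f_\infty(\gamma\,\cdot)$ in $L^1((F\otimes\R)^{nm})$ by finite linear combinations of product step functions sharing a common compact support, applying Theorem~\ref{thm:main} to each approximant, and passing to the limit by dominated convergence (with the dominant being a large constant times a common-support indicator, itself integrable by Theorem~\ref{thm:main}) will yield $\int_{\mathcal X_n}\sum_X F(Xg)\,d\mu = \int F\,d\alpha_F^{nm}$. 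Finally, the right side equals $\int f(h^{-1}X)\,d\alpha_F^{nm}$ after the change of variable $X_\infty \mapsto \gamma^{-1}X_\infty$ (Jacobian $|\det\gamma|_\infty^n = 1$) and the identity $\alpha_\f^{nm}(\delta\cO_\f^{nm}) = \alpha_\f^{nm}(h\cO_\f^{nm})$ (both equal $\|\det h\|_{\A_\f}^n$, since $u,v \in \SL(m,\cO_\f)$ preserve the measure).

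The second statement follows by linearity: every element of the Hecke ring is a finite $\R$-linear combination of basic operators $\mathcal T(\nu^{a_1},\ldots,\nu^{a_m})$, and by Lemma~\ref{lemma:repr} each such operator can be expanded as $\mathcal T f(X) = c\sum_i f(h_i^{-1}X)$ over finitely many coset representatives $h_i \in \mathrm{Mat}_{m\times m}(\cO_\nu) \cap \GL(m,F_\nu)$, each satisfying the hypothesis of the first statement. Applying that statement to each $h=h_i$ and summing gives the identity. The hard part will be the calibration of the strong-approximation step: the congruence depth $A_\nu$ must be chosen exactly large enough so that $u^{-1}\gamma$ normalizes $\delta\cO_\f^{nm}$, and the subsequent $L^1$-approximation needs a uniform integrable dominant; both are verifiable with the ingredients above, but together they form the main technical content of the proof.
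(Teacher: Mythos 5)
Your plan follows essentially the paper's route: Smith normal form of $h_\nu$ at the finitely many bad primes, strong approximation for $\SL_m$ to absorb the left $\GL(m,\cO_\f)$-factor into a global element, a substitution $X \mapsto \gamma X$, and then Theorem~\ref{thm:main}. The paper's own proof is terser, citing Theorem~\ref{thm:main} directly even though after substitution the infinite part $f_\infty(\gamma\,\cdot)$ need not factor over rows as membership in $\mathcal S^m$ requires; so your explicit approximation step is a worthwhile addition rather than a detour.

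Two small points to tighten. First, the strong-approximation element should be taken in $\SL(m,\cO_F)$, not merely in $\SL(m,F)$ with congruence conditions imposed only at the bad primes: you must additionally insist $\gamma_\nu \in \SL(m,\cO_\nu)$ at every finite place where $h_\nu = \mathrm{Id}$, since otherwise the substitution $X\mapsto\gamma X$ perturbs the indicator $\mathbf{1}_{\cO_\nu^{nm}}$ at those places. This is the standard open condition in $\SL(m,\A_\f)$ and causes no difficulty, but it is not optional. Second, dominated convergence requires pointwise, not merely $L^1$, convergence of the approximants, and $L^1$ convergence alone does not give pointwise convergence of $\sum_X F_N(Xg)$ in $g$. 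Since $f_\infty(\gamma\,\cdot)$ is Riemann integrable (hence continuous off a null set), either pass to a pointwise a.e.-converging subsequence of step approximants and invoke the null-set argument used in Section 4.5, or — more cleanly — squeeze $f_\infty(\gamma\,\cdot)$ between lower and upper step functions $s \leq f_\infty(\gamma\,\cdot) \leq S$, each a finite combination of row-wise products of box indicators, apply Theorem~\ref{thm:main} to both, and let $\int (S-s)\,d\alpha_\infty^{nm} \to 0$. The Hecke-ring deduction is correct; just note that the expansion $\mathcal T f(X) = c\sum_i f(h_i^{-1}X)$ over integral left-coset representatives comes from the definition of $\mathcal T$ in Section~2.5 (the double-coset decomposition) rather than from Lemma~\ref{lemma:repr}.
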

\begin{proof}
For each $\nu \in \f$, take the Smith normal form of $h_\nu$; that is, $h_\nu = \gamma_\nu a_\nu \delta_\nu$, where $\gamma_\nu, \delta_\nu \in K_\nu$ and $a_\nu$ is a diagonal matrix. Then $(h^{-1}Xg)_\nu \in (\cO_\nu^n)^m$ if and only if $Xg \in \gamma_\nu a_\nu (\cO_\nu^n)^m$. Now use the strong approximation to find $k \in \SL(m,\cO_F)$ such that $k_\nu\gamma_\nu = l_\nu \diag(\eta_\nu, 1, \ldots, 1)$ for $\eta_\nu = \det \gamma_\nu \in \cO_\nu^*$ and $l_\nu \in \SL(m, \cO_\nu)$ sufficiently close to the identity so that it fixes the set $a_\nu (\cO_\nu^n)^m$. Then, writing $a = \prod_{\nu \in \f} a_\nu$, we have
\begin{equation*}
\sum_{X \in (F^n)^m \atop \mathrm{indep.}} f(h^{-1}Xg) = \sum_{X \in (F^n)^m \atop \mathrm{indep.}} f(h^{-1}k^{-1}Xg) = \sum_{X \in (F^n)^m \atop \mathrm{indep.}} f_\f(a^{-1}Xg)f_\infty(k^{-1}Xg).
\end{equation*}
Therefore Theorem \ref{thm:main} applies, and we conclude that it equals
\begin{equation*}
\int_{(\A_F^n)^m} f(a^{-1}X) d\alpha_F^{nm} =\int_{(\A_F^n)^m} f(h^{-1}X) d\alpha_F^{nm}
\end{equation*}
as desired.
\end{proof}

\begin{theorem}\label{thm:prim}
Choose $\mathbf s \subseteq \f$.
Let $f_{\mathrm{pr}} = f_{\mathrm{pr},\mathbf s}f_{\infty}$ be a function $(\A_F^n)^m \rightarrow \R$, such that $f_\infty$ is Riemann integrable, bounded and compactly supported, and $f_{\mathrm{pr},\mathbf s}$ is the characteristic function of the set of $m \times n$ matrices over $\A_\f$ primitive over all $\nu \in \mathbf s$. Then
\begin{equation*}
\int_{\mathcal X_n} \sum_{X \in (F^n)^m \atop \mathrm{indep.}} f_\mathrm{pr}(Xg) d\mu_n = \int_{(\A_F^n)^m} f_{\mathrm{pr}}(X) d\alpha_F^{mn} = \prod_{i=0}^{m-1} \zeta_\mathbf s(n-i)^{-1} \int_{(\A_\infty^n)^m} f_\infty(X) d\alpha_\infty^{mn},
\end{equation*}
where
\begin{equation*}
\zeta_\mathbf s(s) = \prod_{\nu \in \mathbf s} (1-N(\nu)^{-s})^{-1}.
\end{equation*}
\end{theorem}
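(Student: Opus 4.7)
The plan is to combine the inclusion-exclusion identity of Proposition \ref{prop:prim_expn} with the Hecke-invariance of the Rogers formula proved in Theorem \ref{thm:meanhecke}, and then evaluate the adelic integral explicitly as a product of local Euler factors.

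Assume first that $\mathbf s$ is finite, and set $f := f_\f f_\infty$ with $f_\f$ the characteristic function of $\prod_{\nu \in \f}(\cO_\nu^n)^m$. Form the Hecke-ring element
\[
\mathcal T := \prod_{\nu \in \mathbf s} \Bigl(\sum_{i=0}^m (-1)^i N(\nu)^{i(i-1)/2} \mathcal T^{(i)}(\nu)\Bigr),
\]
which is well-defined because the product is finite. Since the operators $\mathcal T^{(i)}(\nu)$ at distinct finite places commute and act only on their own factor of $X$, applying Proposition \ref{prop:indprim} place-by-place gives $\mathcal T f = f_{\mathrm{pr}}$ pointwise. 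The second assertion of Theorem \ref{thm:meanhecke} then yields
\[
\int_{\mathcal X_n}\sum_{X \in (F^n)^m \atop \mathrm{indep.}} f_\mathrm{pr}(Xg)\,d\mu_n = \int_{(\A_F^n)^m} \mathcal T f(X)\,d\alpha_F^{nm} = \int_{(\A_F^n)^m} f_\mathrm{pr}(X)\,d\alpha_F^{nm},
\]
which is the first equality in the theorem.

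For the second equality, I would compute the right-hand side explicitly. Since $f_\mathrm{pr}(X) = f_{\mathrm{pr},\mathbf s}(X_\f)\,f_\infty(X_\infty)$, the integral factorises into finite-adelic and archimedean parts (modulo the normalisation constant $|\Delta_F|^{-nm/2}$ coming from $\alpha_F = |\Delta_F|^{-1/2}\alpha_\f\alpha_\infty$). The core local computation is: by Lemma \ref{lemma:prim_mat}, a matrix in $(\cO_\nu^n)^m$ is primitive at $\nu$ exactly when its reduction modulo $\pi_\nu$ has rank $m$; hence the $\alpha_\nu^{nm}$-measure of primitive matrices equals the density of rank-$m$ matrices in $\F_\nu^{m\times n}$, namely $\prod_{i=0}^{m-1}(1 - N(\nu)^{i-n})$. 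Multiplying over $\nu \in \mathbf s$ and exchanging the two products,
\[
\alpha_\f^{nm}(f_{\mathrm{pr},\mathbf s}) = \prod_{i=0}^{m-1}\prod_{\nu \in \mathbf s}\bigl(1 - N(\nu)^{-(n-i)}\bigr) = \prod_{i=0}^{m-1}\zeta_\mathbf s(n-i)^{-1},
\]
which combined with the archimedean factor gives the second asserted equality.

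Finally, I would pass from finite to general $\mathbf s$ by exhausting it with finite sets $\mathbf s_k \uparrow \mathbf s$. On the adelic side the functions $f_{\mathrm{pr},\mathbf s_k}$ decrease pointwise to $f_{\mathrm{pr},\mathbf s}$ and are bounded by the integrable $f_\f$, and the infinite product $\prod_{i=0}^{m-1}\zeta_\mathbf s(n-i)^{-1}$ converges thanks to $n-i \geq n-m+1 \geq 2$. The main obstacle I anticipate is the limit passage on the $\mathcal X_n$-side, since the summand depends on $g$; for this, Theorem \ref{thm:meanhecke} applied directly to $f$ yields $\sum_{X \text{ indep.}} f(Xg) \in L^1(\mathcal X_n)$, and this function majorises each truncated sum $\sum_X f_{\mathrm{pr},\mathbf s_k}(Xg)$ pointwise, so dominated convergence on $\mathcal X_n$ finishes the argument.
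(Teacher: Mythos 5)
Your proof is correct and follows the same overall strategy as the paper: combine Proposition \ref{prop:prim_expn} (equivalently \ref{prop:indprim}) with the Hecke-invariance from Theorem \ref{thm:meanhecke}, approximate a general $\mathbf s$ by finite truncations, and invoke dominated convergence on both the adelic and $\mathcal X_n$ sides. The substantive divergence is in how you justify the value $\prod_{i=0}^{m-1}\zeta_{\mathbf s}(n-i)^{-1}$ for the finite-adelic integral. The paper takes the indirect route: it writes $\int f\,d\alpha_F^{nm} = \sum_I Q^{(m)}(I)N(I)^{-n}\int f_{\mathrm{pr}}\,d\alpha_F^{nm}$ with $Q^{(m)}(I)=\deg\mathcal T(I)$, and then proves the Euler-product identity $\sum_I Q^{(m)}(I)N(I)^{-n} = \zeta_{\mathbf s}(n)\cdots\zeta_{\mathbf s}(n-m+1)$ via Proposition \ref{prop:zeta}, which in turn rests on a fairly involved inductive computation of the degree $Q^{(m)}_\nu(k) = \prod_{i=1}^{m-1}\frac{q^{k+i}-1}{q^i-1}$. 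You instead compute $\alpha_\nu^{nm}$ of the primitive set directly as the density of rank-$m$ matrices in $\F_\nu^{m\times n}$, which is the cleaner and more elementary path; it side-steps Proposition \ref{prop:zeta} entirely (that computation is certainly correct — primitivity at $\nu$ is equivalent to full rank mod $\pi_\nu$ by Smith normal form, and the density count $\prod_{i=0}^{m-1}(1-q^{i-n})$ is standard). Two small remarks: (1) when you say $f_{\mathrm{pr},\mathbf s_k}\downarrow f_{\mathrm{pr},\mathbf s}$ pointwise and invoke DCT on $\mathcal X_n$, you should dominate by $\sum_X f_\f(X_\f g_\f)\,|f_\infty(X_\infty g_\infty)|$ rather than by $\sum_X f(Xg)$ itself, since $f_\infty$ is not assumed nonnegative; this is harmless because $f_\f\,|f_\infty|$ is still in $\mathcal S^m$-type class and Theorem \ref{thm:meanhecke} puts its Siegel transform in $L^1(\mathcal X_n)$. (2) Your parenthetical about the $|\Delta_F|^{-nm/2}$ normalisation constant is appropriate — the paper's statement of the second equality appears to absorb (or elide) that factor, and you handle it in the same implicit way, so this is not a gap relative to the reference proof.
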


\begin{proof}
Write $f = f_\f f_\infty$, where $f_\f$ is the characteristic function of $\prod_{\nu \in \f} (\cO_\nu^n)^m$. By Proposition \ref{prop:prim_expn}, we have
\begin{equation*}
\int_{\mathcal X_n} \sum_{X \in (F^n)^m \atop \mathrm{indep.}} f_\mathrm{pr}(Xg) d\mu = \int_{\mathcal X_n} \sum_{X \in (F^n)^m \atop \mathrm{indep.}} \prod_{\nu \in \mathbf s} \left(\sum_{i=0}^m (-1)^iN(\nu)^{i(i-1)/2}\mathcal T^{(i)}(\nu) \right)f(Xg) d\mu.
\end{equation*}
For $N > 0$, let $\mathbf s_N$ be the set of all finite primes $\nu$ such that $N(\nu) < N$. Using the dominated convergence theorem, the right-hand side can be rewritten as
\begin{equation*}
\lim_{N \rightarrow \infty} \int_{\mathcal X_n} \sum_{X \in (F^n)^m \atop \mathrm{indep.}} \prod_{\nu \in \mathbf s_N} \left(\sum_{i=0}^m (-1)^iN(\nu)^{i(i-1)/2}\mathcal T^{(i)}(\nu) \right)f(Xg) d\mu.
\end{equation*}
The product over $\nu$ of the $\mathcal T$'s here is finite, and is thus an element of the Hecke ring. Hence Theorem \ref{thm:meanhecke} implies that the above is equal to
\begin{align*}
&\lim_{N \rightarrow \infty} \int_{(\A_F^n)^m} \prod_{\nu \in \mathbf s_N} \left(\sum_{i=0}^m (-1)^iN(\nu)^{i(i-1)/2}\mathcal T^{(i)}(\nu) \right)f(X) d\alpha_F^{nm} \\
&= \lim_{N \rightarrow \infty} \int_{(\A_F^n)^m} \prod_{\nu \in \mathbf s_N} \left(\sum_{i=0}^m (-1)^iN(\nu)^{i(i-1)/2}\mathcal T^{(i)}(\nu) \right)\sum_I \mathcal T(I)f_\mathrm{pr}(X) d\alpha_F^{nm} \\
&= \int_{(\A_F^n)^m} f_\mathrm{pr}(X) d\alpha_F^{nm}.
\end{align*}

It remains to compare the volumes of $f$ and $f_\mathrm{pr}$. We have
\begin{align*}
\int_{(\A_F^n)^m} f(X) d\alpha_F^{nm} &= \int_{(\A_F^n)^m} \sum_{I} \mathcal T(I) f_\mathrm{pr}(X) d\alpha_F^{nm} \\
& = \sum_I Q^{(m)}(I)N(I)^{-n} \int_{(\A_F^n)^m} f_\mathrm{pr}(X) d\alpha_F^{nm},
\end{align*}
where $I$ runs over all nonzero ideals generated by $\mathbf s$, and $Q^{(m)}(I) = \deg \mathcal T(I)$. Proposition \ref{prop:zeta} below then completes the proof of the theorem.

\end{proof}

\begin{proposition}\label{prop:zeta}
Write $Q^{(m)}(I) = \deg \mathcal T(I)$, and 
\begin{equation*}
\zeta_\mathbf s(s) = \prod_{\nu \in \mathbf s} (1-N(\nu)^{-s})^{-1}
\end{equation*}
as before. Then
\begin{equation*}
\sum_I Q^{(m)}(I)N(I)^{-n} = \zeta_\mathbf s(n) \zeta_\mathbf s(n-1) \cdots \zeta_\mathbf s(n-m+1).
\end{equation*} 
\end{proposition}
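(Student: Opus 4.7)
The plan is to decompose the sum as an Euler product over $\mathbf s$, identify each local factor as the zeta function counting sublattices of $\cO_\nu^m$, and evaluate the latter via Hermite normal form.

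First, because $\mathcal T(I)$ factors over primes (per the defining formula $\mathcal T(I) = \prod_\nu \mathcal T(\nu^{\ord_\nu I_1},\ldots,\nu^{\ord_\nu I_m})$) and $\deg$ is a ring homomorphism, $Q^{(m)}$ is multiplicative on coprime ideals. Hence
\begin{equation*}
\sum_I Q^{(m)}(I) N(I)^{-n} = \prod_{\nu \in \mathbf s} \sum_{k \geq 0} Q^{(m)}(\nu^k)\,N(\nu)^{-nk},
\end{equation*}
reducing the claim to evaluating each local factor. Writing $q = N(\nu)$, I would next interpret $Q^{(m)}(\nu^k)$ concretely: by Smith normal form, the right $K_\nu$-cosets in $K_\nu\diag(\pi_\nu^{a_1},\ldots,\pi_\nu^{a_m})K_\nu$ biject with the $\cO_\nu$-sublattices of $\cO_\nu^m$ having elementary divisors $(\pi_\nu^{a_1},\ldots,\pi_\nu^{a_m})$, where a coset representative $h$ is sent to the row span of $h$. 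Summing over chains $0 \leq a_1 \leq \ldots \leq a_m$ with $\sum a_j = k$ then partitions all index-$q^k$ sublattices of $\cO_\nu^m$, so $Q^{(m)}(\nu^k)$ equals the number of such sublattices.

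Next, I would compute $\sum_k Q^{(m)}(\nu^k)q^{-nk}$ via Hermite normal form: every full-rank sublattice of $\cO_\nu^m$ has a unique upper-triangular basis matrix with diagonal $(\pi_\nu^{b_1},\ldots,\pi_\nu^{b_m})$, $b_i \geq 0$, and off-diagonal entry $c_{ij}$ (for $i<j$) chosen independently from a fixed set of $q^{b_j}$ representatives of $\cO_\nu/\pi_\nu^{b_j}\cO_\nu$. The index is $q^{b_1+\cdots+b_m}$ and the number of sublattices of type $(b_1,\ldots,b_m)$ is $\prod_{j=2}^m q^{(j-1)b_j}$, so the generating series splits as
\begin{equation*}
\sum_{k \geq 0} Q^{(m)}(\nu^k)\,q^{-nk} = \prod_{j=1}^m \sum_{b_j \geq 0} q^{(j-1-n)b_j} = \prod_{j=1}^m \frac{1}{1 - q^{j-1-n}} = \prod_{i=0}^{m-1} \frac{1}{1 - q^{-(n-i)}}.
\end{equation*}
Taking the product over $\nu \in \mathbf s$ and regrouping by $i$ yields $\prod_{i=0}^{m-1}\zeta_{\mathbf s}(n-i)$, as asserted.

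The argument is essentially bookkeeping; the main pitfall is matching the convention $a_1 \leq \ldots \leq a_m$ for elementary divisors (implicit in the definition of $\mathcal T(I_1,\ldots,I_m)$) with the Hermite normal form one uses to enumerate sublattices. Both the Smith-form bijection and the Hermite-form count are standard over a discrete valuation ring (cf.\ \cite[Chapter 3]{Shi71}, whose $\Z$-based exposition transfers verbatim since $\cO_\nu$ is a PID), and convergence is not an issue since $m < n$ forces $n - i \geq 2$ for every $0 \leq i \leq m-1$.
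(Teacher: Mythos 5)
Your proof is correct, and it takes a genuinely different route from the paper's. Both arguments reduce, via multiplicativity of $\deg$, to a local identity $\sum_{k\geq 0}Q^{(m)}_\nu(k)q^{-nk}=\zeta_\nu(n)\cdots\zeta_\nu(n-m+1)$ at each $\nu\in\mathbf s$, but they diverge from there. The paper first establishes the closed form $Q^{(m)}_\nu(k)=\prod_{i=1}^{m-1}\frac{q^{k+i}-1}{q^i-1}$ (a Gaussian binomial coefficient) by a coset-counting induction on $m$ together with summation by parts, and then proves the local zeta identity by a second induction on $m$. You bypass the closed form entirely: Smith normal form identifies $Q^{(m)}_\nu(k)$ with the number of index-$q^k$ sublattices of $\cO_\nu^m$, and Hermite normal form parametrizes those by unconstrained diagonals $(b_1,\ldots,b_m)$ with $\prod_{j}q^{(j-1)b_j}$ sublattices of each diagonal type, so the generating series factors term by term into the $m$ local zeta factors. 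Your route is the classical derivation of the Hermite sublattice zeta function and is shorter, replacing a double induction with a direct change of parametrization (sorted Smith invariants to unsorted Hermite diagonals, which you correctly flag as the point that needs matching up); the paper's version has the collateral benefit of producing the explicit degree formula, though that formula is not used elsewhere in the paper.
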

\begin{proof}
It suffices to prove the ``local'' version of the proposition, which goes as follows. Fix $\nu \in \f$, and write $q = N(\nu)$, $Q^{(m)}_\nu(k) = \deg \mathcal T(\nu^k)$, and $\zeta_\nu(k) = (1-q^{-k})^{-1}$. Then we want to prove
\begin{equation}\label{eq:zeta_local}
\sum_{k=0}^\infty Q^{(m)}_\nu(k)q^{-kn} = \zeta_\nu(n)\zeta_\nu(n-1) \cdots \zeta_\nu(n-m+1).
\end{equation}

Later we will prove that
\begin{equation}\label{eq:Q}
Q^{(m)}_\nu(k) = \prod_{i=1}^{m-1} \frac{q^{k+i} - 1}{q^i - 1}.
\end{equation}
Let us assume this for now. We will prove \eqref{eq:zeta_local} by induction on $m$. For $m=1$, it is trivially true. For the induction step, start by rewriting the left-hand side of \eqref{eq:zeta_local} as
\begin{equation*}
\frac{1}{q^{m-1}-1} \sum_{k=0}^\infty \frac{q^{k+m-1}-1}{q^{kn}} \cdot Q^{(m-1)}_\nu(k).
\end{equation*}
By the induction hypothesis, this equals
\begin{align*}
&= \frac{1}{q^{m-1}-1}\left(q^{m-1}\zeta_\nu(n-1)\cdots\zeta_\nu(n-m+1) -\zeta_\nu(n)\cdots\zeta_\nu(n-m+2)\right) \\
&= \frac{1}{q^{m-1}-1}\zeta_\nu(n-1)\cdots\zeta_\nu(n-m+2)\left(q^{m-1}\zeta_\nu(n-m+1)-\zeta_\nu(n)\right),
\end{align*}
but one can easily verify that
\begin{equation*}
\frac{q^{m-1}\zeta_\nu(n-m+1)-\zeta_\nu(n)}{q^{m-1}-1} = \zeta_\nu(n)\zeta_\nu(n-m+1),
\end{equation*}
proving \eqref{eq:zeta_local}, as desired.

Thus it remains to prove \eqref{eq:Q} above. We again argue by induction on $m$. There is nothing to prove for the base case $m=1$. For the general case, observe that each and every coset appearing in $\mathcal T(\nu^k)$ has a representative of the form
\begin{equation*}
\begin{pmatrix}
\pi_\nu^{a_1} & *                            & *         & \cdots & * \\
                      & \pi_\nu^{a_2-a_1} & *         &            & * \\
                      &                              & \ddots &            & \vdots  \\
                     &                         &     & \pi_\nu^{a_{m-1}-a_{m-2}} & * \\
                      &                              &             & & \pi_\nu^{a_{m}-a_{m-1}}
\end{pmatrix},
\end{equation*}
where $0 = a_0 \leq a_1 \leq \ldots \leq a_m = k$, and each entry to the right of $\pi_\nu^{a_{i}-a_{i-1}}$ is chosen uniquely mod $\pi_\nu^{a_{i}-a_{i-1}}$. Since
\begin{equation*}
\prod_{i=1}^{m-1} q^{(m-i)(a_i-a_{i-1})} = q^{a_1 + \ldots + a_{m-1}},
\end{equation*}
it follows that
\begin{equation*}
Q^{(m)}_\nu(k) = \sum_{0 \leq a_1 \leq \ldots \leq a_{m-1} \leq k} q^{a_1 + \ldots + a_{m-1}},
\end{equation*}
which in turn implies
\begin{equation*}
Q^{(m)}_\nu(k) = \sum_{a=0}^k q^aQ^{(m-1)}_\nu(a)
\end{equation*}
(think $a = a_{m-1}$). By the summation by parts, we obtain
\begin{equation*}
Q^{(m)}_\nu(k) = Q^{(m-1)}_\nu(k)\frac{q^{k+1}-1}{q-1} - \sum_{a=1}^k \frac{q^a-1}{q-1}\left(Q^{(m-1)}_\nu(a)-Q^{(m-1)}_\nu(a-1)\right).
\end{equation*}
The induction hypothesis implies
\begin{align*}
&Q^{(m-1)}_\nu(a)-Q^{(m-1)}_\nu(a-1) \\
&=\frac{q^{a+1}-1}{q-1} \cdots \frac{q^{a+m-2}-1}{q^{m-2}-1} - \frac{q^{a}-1}{q-1} \cdots \frac{q^{a+m-3}-1}{q^{m-2}-1} \\
&=q^aQ^{(m-2)}_\nu(a) = \frac{q^a(q^{m-2}-1)}{q^a-1}Q^{(m-1)}_\nu(a-1).
\end{align*}
Therefore
\begin{align*}
Q^{(m)}_\nu(k) &= Q^{(m-1)}_\nu(k)\frac{q^{k+1}-1}{q-1} - \frac{q^{m-2}-1}{q-1}\sum_{a=1}^k q^aQ^{(m-1)}_\nu(a-1) \\
&= Q^{(m-1)}_\nu(k)\frac{q^{k+1}-1}{q-1} - Q^{(m)}_\nu(k-1)\frac{q^{m-1}-q}{q-1} \\
&= Q^{(m-1)}_\nu(k)\frac{q^{k+1}-1}{q-1} - \left(Q^{(m)}_\nu(k) - q^{k}Q^{(m-1)}_\nu(k)\right)\frac{q^{m-1}-q}{q-1}.
\end{align*}
From this, we obtain
\begin{align*}
&\left(1+\frac{q^{m-1}-q}{q-1}\right)Q^{(m)}_\nu(k) = \left(\frac{q^{k+1}-1}{q-1}+\frac{q^k(q^{m-1}-q)}{q-1}\right)Q^{(m-1)}_\nu(k) \\
&\Rightarrow \frac{q^{m-1}-1}{q-1}Q^{(m)}_\nu(k) = \frac{q^{m+k-1}-1}{q-1}Q^{(m-1)}_\nu(k) \\
&\Rightarrow Q^{(m)}_\nu(k) = \frac{q^{m+k-1}-1}{q^{m-1}-1}Q^{(m-1)}_\nu(k).
\end{align*}
This completes the proof of \eqref{eq:Q}.
\end{proof}

\subsection{Rogers integral formula, the higher level case}

As an application of Theorem \ref{thm:prim}, we prove the following result. As will be shown later in this paper, the full adelic Rogers formula, Theorem \ref{thm:intro_main}, will follow as its corollary.


\begin{theorem}\label{thm:cong}
Let $1 \leq m < n$, and let $f: (\A_F^n)^m \rightarrow \R$ be of form $f = f_\f f_\infty$ such that $f_\infty$ is a bounded and compactly supported Riemann integrable function on $(\A_\infty^n)^m$, and $f_\f$ is the characteristic function of $v + \eta\prod_{\nu \in \f} (\cO_\nu^n)^m \subseteq (\A_\f^n)^m$ for some $v \in (\A_\f^n)^m$ and $\eta \in \A_\f^*$. Then
\begin{equation*}
\int_{\mathcal X_n} \sum_{X \in (F^n)^m \atop \mathrm{indep.}} f(Xg) d\mu_n = \int_{(\A_F^n)^m} f(X) d\alpha_F^{nm}.
\end{equation*}
\end{theorem}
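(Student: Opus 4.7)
The plan is to deduce Theorem~\ref{thm:cong} from the level~$1$ case (Theorem~\ref{thm:main}, or equivalently Theorem~\ref{thm:meanhecke}) via the standard folding--unfolding argument on the sum over $(F^n)^m$. The essential input is that $\Gamma_n = \GL(n,F)$ acts transitively from the right on the set of $F$-linearly independent $m$-tuples in $(F^n)^m$: fixing the base point $X_0 = (\mathrm{Id}_m \mid 0) \in (F^n)^m$ with rational parabolic stabilizer $P = \mathrm{Stab}_{\Gamma_n}(X_0)$, the sum folds as $\sum_{X\text{ indep}} f(Xg) = \sum_{\gamma \in P \backslash \Gamma_n} f(X_0\gamma g)$.

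First I would reduce to $v \in (F^n)^m$. Since $F$ is diagonally dense in $\A_\f$ (strong approximation for the affine space) and $\eta(\cO_\f^n)^m$ is open in $\A_\f^{nm}$, one can pick $v^* \in (F^n)^m$ with $v^* - v \in \eta(\cO_\f^n)^m$, which leaves $f_\f$ unchanged; thus we may assume $v \in (F^n)^m$. Next, unfold the integral of the folded sum:
\[
\int_{\mathcal X_n} \sum_{X\text{ indep}} f(Xg)\, d\mu_n \;=\; \int_{P \backslash G_n} f(X_0 g)\, d\mu_n(g),
\]
justified by Fubini since $f$ is compactly supported and $\sum_X|f(Xg)|$ is $L^1$ on $\mathcal X_n$ (by Theorem~\ref{thm:main} applied to $|f|$). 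Let $P_n \supseteq P$ denote the analogous $G_n$-stabilizer of $X_0$. It is a unimodular parabolic — its Levi $\GL(n-m,\A_F)^{(1)}$ preserves Haar on the unipotent radical $\A_F^{(n-m)m}$ because $\|\det\|=1$ — and $P \subset P_n$ has finite Tamagawa covolume, with $P_n/P \cong (\A_F/F)^{(n-m)m} \cdot \mathcal X_{n-m}$ of finite volume. Descending once more to $P_n \backslash G_n$ and identifying it with the $G_n$-orbit $X_0 G_n$, an open dense subset of $(\A_F^n)^m$ of full Tamagawa measure, the uniqueness of $G_n$-right-invariant measures on this orbit yields
\[
\int_{\mathcal X_n} \sum_{X\text{ indep}} f(Xg)\, d\mu_n \;=\; c\int_{(\A_F^n)^m} f\, d\alpha_F^{nm}
\]
for some constant $c$ depending only on the normalizations built into the Tamagawa setup, but not on $f$.

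Pinning down $c = 1$ is then immediate: apply the identity to any $f \in \mathcal S^m$, where Theorem~\ref{thm:main} with $D = \mathrm{Id}_m$ forces $c = 1$. This constant propagates to every integrable $f$, in particular to the higher-level $f$ of Theorem~\ref{thm:cong}, completing the proof. The main technical obstacle is the Tamagawa-normalization bookkeeping across the two successive unfoldings $\Gamma_n \backslash G_n \to P \backslash G_n \to P_n \backslash G_n$: one must verify the unimodularity of $P_n$, compute the finite covolume $\mathrm{vol}(P_n/P)$ cleanly, and confirm that the resulting right $G_n$-invariant measure on $P_n \backslash G_n \cong X_0 G_n$ agrees with $\alpha_F^{nm}$ up to the constant matched by Theorem~\ref{thm:main}. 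Once this bookkeeping is in place the argument is mechanical.
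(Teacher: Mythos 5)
Your proposal takes a genuinely different route from the paper. The paper proves Theorem~\ref{thm:cong} by averaging the sum over coset representatives of $K/K(e)$ where $K = \prod_\nu\GL(n,\cO_\nu)$ and $K(e)$ is a principal congruence subgroup: this replaces $f_\f$ (supported on the off-center set $v + \eta\prod_\nu(\cO_\nu^n)^m$) by a right $K$-invariant function $\tilde f_\f$ supported on a set centered at the origin, the constant $\sigma_\nu$ being pinned down by an explicit orbit-stabilizer count in $\GL(n,\cO_\nu/\pi_\nu^l\cO_\nu)$ (Lemmas~\ref{lemma:cong1}--\ref{lemma:cong3}); one then applies Theorem~\ref{thm:meanhecke} to $\tilde f$. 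Your approach is the classical Siegel--Weil unfolding: fold the sum over the right $\GL(n,F)$-orbit of $X_0 = (\mathrm{Id}_m \mid 0)$, unfold the integral to $P\backslash G_n$, descend to $P_n\backslash G_n$, identify the latter with the orbit $X_0G_n$ of full Tamagawa measure, and invoke uniqueness of the invariant measure; the unknown multiplicative constant is then normalized to $1$ by comparing with Theorem~\ref{thm:main} on a single test function from $\mathcal S^m$. This bypasses Theorems~\ref{thm:meanhecke} and~\ref{thm:prim} entirely and, since the identity you derive up to a constant applies to any integrable $f$, it would deliver the first formula of Theorem~\ref{thm:intro_main} directly without the approximation argument of Section~4.5. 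The trade-off: the paper's route stays entirely computational (explicit finite-group counts), while yours requires the homogeneous-space measure theory (unimodularity of $P_n$, existence and compatibility of invariant measures on $P\backslash G_n$ and $P_n\backslash G_n$, and the fact that the non-orbit locus is null), so the ``mechanical bookkeeping'' you defer is in fact where the real content of a complete write-up would live.

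Two points to correct or sharpen. First, the orbit $X_0G_n$ is \emph{not} open in $\mathrm{Mat}_{m\times n}(\A_F)$: membership requires $X_\nu$ to be primitive at all but finitely many $\nu$, and a condition constraining infinitely many local coordinates cannot be open in the restricted product topology. The orbit is nevertheless a Borel set of full $\alpha_F^{nm}$-measure (a Borel--Cantelli argument using $\sum_\nu N\nu^{-(n-m+1)} < \infty$, valid since $m < n$, shows the imprimitive-at-infinitely-many-places locus is null; and a countable union of the dependent-at-$\nu$ loci is null), and that is all you need; but ``open dense'' should be replaced by ``Borel, conull.'' Second, the reduction to $v\in(F^n)^m$ at the start by strong approximation is correct but, as written, is never used in the rest of your argument — the unfolding is indifferent to where $v$ lives — so it can be dropped. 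With these adjustments your argument is sound and arguably streamlines Section~4, at the cost of bringing in the quotient-measure machinery that the paper's more hands-on proof avoids.
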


\begin{proof}
Let $K_\nu = \GL(n, \cO_\nu)$, and let $K_\nu(e_\nu)$ be the kernel of the projection $K_\nu \rightarrow \GL(n, \cO_\nu/\pi_\nu^{e_\nu}\cO_\nu)$ for an integer $e_\nu \geq 1$; we put $K_\nu(0) = K_\nu$. Also, for the sequence $e = (e_\nu)_{\nu \in \f}$ where $e_\nu \in \Z_{\geq 0}$, all but finitely many of which are $0$, write $K(e) := \prod_{\nu \in \f} K_\nu(e_\nu)$.

Our idea for the proof is to choose an appropriate $e$ such that $f(X) = f(Xk)$ for $k \in K(e)$, and consider
\begin{equation}\label{eq:cong}
\frac{1}{|K : K(e)|} \sum_{k \in K/K(e)} \sum_{X \in (F^n)^m \atop \mathrm{indep.}} f(Xgk)
\end{equation}
instead, noting that this and $\sum_{X \in (F^n)^m \atop \mathrm{indep.}} f(Xg)$ yields the same value when integrated over $\mathcal X_n$. We will show that \eqref{eq:cong} is equal to
\begin{equation}\label{eq:cong2}
\frac{1}{|K : K(e)|} \sum_{X \in (F^n)^m \atop \mathrm{indep.}} \tilde{f}(Xg),
\end{equation}
where $\tilde{f} = \tilde f_\f \tilde f_\infty$ is a certain modification of $f$ that will be described more carefully below. The key point is that $\tilde f_\f$ will be the characteristic function of a set ``centered at'' $0 \in (\A_\f^n)^m$, to which Theorem \ref{thm:meanhecke} becomes applicable, yielding us Theorem \ref{thm:cong}.

Let us fill in the details of the argument outlined above. The sequence $e = (e_\nu)_{\nu \in \f}$ is determined by the condition
\begin{equation*}
e_\nu = \min\{e_\nu \in \Z_{\geq 0} : \pi_\nu^{e_\nu}v_\nu \in \eta_\nu(\cO_\nu^n)^m\}.
\end{equation*}
Indeed $e_\nu = 0$ for almost all $\nu$. It can also be easily checked that $f_\f(Xk) = f_\f(X)$ for $k \in K(e)$.

$\tilde f$ is defined as follows: $\tilde f_\infty = f_\infty$, $\tilde f_\nu = f_\nu$ for $\nu$ with $e_\nu = 0$, and for those $\nu$ such that $e_\nu > 0$, $\tilde f_\nu$ is $\sigma_\nu$ times the characteristic function of $S_\nu := (v_\nu + \eta_\nu\cO_\nu^{nm})K_\nu \subseteq (F_\nu^n)^m$, and $\sigma_\nu$ is the constant satisfying
\begin{equation}\label{eq:sigma}
\frac{\sigma_\nu\alpha_\nu^{nm}(S_\nu)}{|K_\nu : K_\nu(e_\nu)|} = \alpha_\nu^{nm}(v_\nu + \eta_\nu\cO_\nu^{nm}).
\end{equation}
We defer the proof of the equality of \eqref{eq:cong} and \eqref{eq:cong2} to the next section, and contiue with the proof of Theorem \ref{thm:cong}.

We need the following description of $S_\nu$, in order to apply Theorem \ref{thm:meanhecke} later.
Recall that, by the theory of the Smith normal form, we can write
\begin{equation*}
v_\nu = \gamma \cdot \diag(\pi_\nu^{a_1}, \ldots, \pi_\nu^{a_m}) \cdot P
\end{equation*}
for some $\gamma \in K_\nu$ and $P \in \mathrm{Mat}_{m \times n}(\cO_\nu)$ primitive, and $a_1 \leq \ldots \leq a_m$. Suppose $m'$ is the greatest index for which $b:=\ord_\nu \eta_\nu > a_{m'}$. Then, with $a := \diag(\pi_\nu^{a_1}, \ldots, \pi_\nu^{a_{m'}}, \pi_\nu^b, \dots, \pi_\nu^b)$, we have
\begin{align*}
S_\nu = \gamma(aP+\eta_\nu\cO^{nm})K_\nu = \gamma a P_{m'},
\end{align*}
where $P_{m'}$ is the set of all elements in $\mathrm{Mat}_{m \times n}(\cO_\nu)$ whose first $m'$ rows form a primitive $m' \times n$ matrix.

By Proposition \ref{prop:indprim}, we have
\begin{equation*}
\tilde{f}_\nu(X) = \sigma_\nu \mathbf 1_{P_{m'}}(a^{-1}\gamma^{-1}X) = \sigma_\nu \left( \sum_{i=0}^{m'}(-1)^iN(\nu)^{i(i-1)/2}{\mathcal T'}^{(i)}(\nu) \right) \mathbf 1_{\cO_\nu^{nm}}(a^{-1}\gamma^{-1}X),
\end{equation*}
where ${\mathcal T'}$ is $\mathcal T$ in dimension $m'$; more precisely,
\begin{equation*}
{\mathcal T'}^{(i)}f(X) = \frac{1}{\omega_{\nu, m'}(K'_\nu \alpha'_\nu K'_\nu)} \int_{K'_\nu \alpha'_\nu K'_\nu} f\left(\begin{pmatrix} \gamma^{-1} & \\  & \mathrm{Id}_{m-m'} \end{pmatrix} X\right)d\omega_{\nu,m'},
\end{equation*}
where $K'_\nu = \GL(m', \cO_\nu)$ and $\alpha'_\nu = \mathrm{diag}(1, \ldots, 1, \nu, \ldots, \nu)$  with $m'-i$ $1$'s and $i$ $\nu$'s. Therefore we may apply Theorem \ref{thm:meanhecke} to \eqref{eq:cong2}, and \eqref{eq:sigma} ensures that it has the desired measure.
\end{proof}

\subsection{Equivalence of \eqref{eq:cong} and \eqref{eq:cong2}}

We continue with the notations from the previous section. From the definition of $S_\nu$, it is clear that
\begin{equation*}
\sum_{k \in K_\nu / K_\nu(e_\nu)} f_\nu(Xk) = \sigma_\nu \mathbf 1_{S_\nu}(X),
\end{equation*}
where $\sigma_\nu$ is the order of the stabilizer of $v_\nu + \eta_\nu\cO_\nu^{nm}$ in $K_\nu(e_\nu) \backslash K_\nu$. Since
\begin{equation*}
v_\nu + \eta_\nu\cO_\nu^{nm} = \gamma a (P + \diag(\pi_\nu^{l_1}, \ldots, \pi_\nu^{l_m})\cO^{nm})
\end{equation*}
where $l_1 = b-a_1, \ldots, l_{m'} = b-a_{m'}, l_{m'+1} = \ldots = l_m = 0$ and $P \in P_{m'}$, $\sigma_\nu$ is also the order of the stabilizer of
$P + \diag(\pi_\nu^{l_1}, \ldots, \pi_\nu^{l_m})\cO^{nm}.$
To show that \eqref{eq:cong} and \eqref{eq:cong2} are equal, all we need to show is that this $\sigma_\nu$ satisfies \eqref{eq:sigma}. This will be done through a series of lemmas below. We write $q = N(\nu)$ for brevity in what follows.

\begin{lemma}\label{lemma:cong1}
The order of the group $\GL(n,\cO_\nu / \pi_\nu^l\cO_\nu)$ is $q^{(l-1)n^2}(q^n-1)(q^n-q) \cdots (q^n-q^{n-1})$.
\end{lemma}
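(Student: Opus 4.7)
The plan is to exploit the fact that $R := \cO_\nu / \pi_\nu^l \cO_\nu$ is a local ring with maximal ideal $\mathfrak m := \pi_\nu R$ and residue field $R/\mathfrak m \cong \F_q$. I would consider the reduction-mod-$\mathfrak m$ homomorphism
\begin{equation*}
\varphi : \GL(n, R) \longrightarrow \GL(n, \F_q),
\end{equation*}
and show it is surjective with a kernel of known size, from which the result follows by multiplicativity of orders in a short exact sequence.

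For surjectivity, given $\bar A \in \GL(n,\F_q)$, lift each entry arbitrarily to $R$ to obtain $A \in \mathrm{Mat}_n(R)$. Since $R$ is local, an element of $R$ is a unit iff its image in $\F_q$ is nonzero; the same principle applied to $\det A$ (whose reduction is $\det \bar A \in \F_q^*$) shows $A \in \GL(n,R)$.

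For the kernel, elements are precisely matrices of the form $I + \pi_\nu B$ with $B \in \mathrm{Mat}_n(R)$; note every such matrix is automatically invertible, since its determinant reduces to $1$ mod $\mathfrak m$ and is therefore a unit by the local ring argument above. Two such matrices $I + \pi_\nu B$ and $I + \pi_\nu B'$ coincide in $\GL(n,R)$ iff $\pi_\nu(B-B') = 0$ in $R$, i.e. iff $B - B' \in \pi_\nu^{l-1}\mathrm{Mat}_n(R)$. Hence the kernel is in bijection with $\mathrm{Mat}_n(R/\pi_\nu^{l-1}R)$, which has cardinality $(q^{l-1})^{n^2} = q^{(l-1)n^2}$.

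Combining with the classical count $|\GL(n,\F_q)| = (q^n-1)(q^n-q)\cdots(q^n-q^{n-1})$ yields the claimed formula. The only conceptual step is the surjectivity/invertibility argument, which is routine for local rings; the main obstacle is not really an obstacle, just bookkeeping of the kernel, which I expect to dispatch in one line.
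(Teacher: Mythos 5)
Your proof is correct and follows essentially the same route as the paper's: both use the reduction map to $\GL(n,\F_q)$, identify the kernel as $I + \pi_\nu \mathrm{Mat}_n$, and count it as $q^{(l-1)n^2}$. You have simply filled in the routine local-ring details (surjectivity of the reduction and invertibility of the kernel elements) that the paper leaves implicit.
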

\begin{proof}
The case $l=1$ is well-known: $|\GL(n,\cO_\nu / \pi_\nu\cO_\nu)| = (q^n-1)(q^n-q) \cdots (q^n-q^{n-1})$. For general $l$, consider the surjection $\GL(n,\cO_\nu / \pi_\nu^l\cO_\nu) \twoheadrightarrow \GL(n,\cO_\nu / \pi_\nu\cO_\nu)$ induced by reduction modulo $\pi_\nu$. This map has kernel $\mathrm{Id} + \pi_\nu\mathrm{Mat}_{n \times n}(\cO_\nu/\pi_\nu^l\cO_\nu)$, which has order $q^{(l-1)n^2}$.
\end{proof}

\begin{lemma}\label{lemma:cong2}
Let $1 \leq m' \leq m < n$, and let $P \in P_{m'}$. Also let $l=l_1 \geq l_2 \geq \ldots \geq l_{m'} > l_{m'+1} = \ldots = l_m = 0$. Then the order of the stabilizer of $P + \diag(\pi_\nu^{l_1}, \ldots, \pi_\nu^{l_m})\cO_\nu^{nm}$ (mod $\pi_\nu^l$) in $\GL(n,\cO_\nu / \pi_\nu^l\cO_\nu)$ is 
\begin{equation*}
q^{lm'(n-m')}|\GL(n-m',\cO_\nu / \pi_\nu^l\cO_\nu)| \prod_{i=2}^{m'}q^{n(l_1-l_i)}.
\end{equation*}
\end{lemma}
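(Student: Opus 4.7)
The plan is to reduce the problem to an essentially upper-triangular block computation by exploiting the primitivity of the first $m'$ rows of $P$. I shall write $P = \begin{pmatrix} P_1 \\ P_2 \end{pmatrix}$ with $P_1$ the primitive $m' \times n$ block. Since $P_1$ is primitive, Lemma \ref{lemma:prim_mat} supplies an element $g \in \GL(n,\cO_\nu)$ with $P_1 g = (\mathrm{Id}_{m'}, 0)$. Replacing $k$ by $g^{-1} k g$ and $P$ by $Pg$, and observing that stabilizers are conjugated accordingly, I may assume $P_1 = (\mathrm{Id}_{m'}, 0)$. Moreover, since $l_{m'+1} = \cdots = l_m = 0$, the last $m - m'$ rows of $Pk - P$ are free modulo $\pi_\nu^l\cO_\nu^n$, so the rows of $P_2$ contribute no constraint, and the problem reduces to counting $k \in \GL(n,\cO_\nu/\pi_\nu^l\cO_\nu)$ stabilizing $(\mathrm{Id}_{m'},0) + \diag(\pi_\nu^{l_1},\ldots,\pi_\nu^{l_{m'}})\cO_\nu^{nm'}$.

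Next, decompose $k$ in block form
\[
k = \begin{pmatrix} A & B \\ C & D \end{pmatrix},
\]
with $A$ of size $m' \times m'$, $B$ of size $m' \times (n-m')$, $C$ of size $(n-m') \times m'$, and $D$ of size $(n-m') \times (n-m')$. The condition $(\mathrm{Id}_{m'}, 0) k \in (\mathrm{Id}_{m'}, 0) + \diag(\pi_\nu^{l_1},\ldots,\pi_\nu^{l_{m'}})\cO_\nu^{nm'}$ is equivalent to requiring, for each $1 \le i \le m'$, that the $i$-th row of $A - \mathrm{Id}_{m'}$ lie in $\pi_\nu^{l_i}(\cO_\nu/\pi_\nu^l\cO_\nu)^{m'}$ and the $i$-th row of $B$ lie in $\pi_\nu^{l_i}(\cO_\nu/\pi_\nu^l\cO_\nu)^{n-m'}$. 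Since $l = l_1$, each row contributes $q^{(l_1 - l_i)m'}$ and $q^{(l_1 - l_i)(n-m')}$ choices respectively, and multiplying together (the $i=1$ terms being trivial) yields
\[
\prod_{i=2}^{m'} q^{n(l_1 - l_i)}
\]
choices for the pair $(A,B)$.

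For the blocks $C, D$, note that $A \equiv \mathrm{Id}_{m'} \pmod{\pi_\nu}$ (as $l_i \ge 1$ for $i \le m'$), so $A \in \GL(m',\cO_\nu/\pi_\nu^l\cO_\nu)$. The Schur complement criterion then gives $k \in \GL(n,\cO_\nu/\pi_\nu^l\cO_\nu)$ if and only if $D - C A^{-1} B \in \GL(n-m',\cO_\nu/\pi_\nu^l\cO_\nu)$. Letting $C$ range freely over $\mathrm{Mat}_{(n-m') \times m'}(\cO_\nu/\pi_\nu^l\cO_\nu)$ (giving $q^{lm'(n-m')}$ choices) and, for each $C$, parametrizing $D$ by $D' := D - CA^{-1}B \in \GL(n-m',\cO_\nu/\pi_\nu^l\cO_\nu)$ (giving $|\GL(n-m',\cO_\nu/\pi_\nu^l\cO_\nu)|$ choices), we obtain the factor $q^{lm'(n-m')}|\GL(n-m',\cO_\nu/\pi_\nu^l\cO_\nu)|$. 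Multiplying this by the count of $(A,B)$ yields the claimed formula.

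The only mildly delicate step is the reduction to $P_1 = (\mathrm{Id}_{m'}, 0)$, since one must verify that the diagonal error space $\diag(\pi_\nu^{l_1},\ldots,\pi_\nu^{l_m})\cO_\nu^{nm}$ is left invariant by the change of coordinates from the right by $g \in \GL(n,\cO_\nu)$; but this is automatic, as right multiplication by $g$ acts on each row separately and preserves the $\cO_\nu$-module $\pi_\nu^{l_i}\cO_\nu^n$. The rest of the argument is bookkeeping of orders in block-matrix decompositions.
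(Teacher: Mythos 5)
Your proof is correct and follows essentially the same route as the paper: normalize so the primitive $m'\times n$ block of $P$ is $(\mathrm{Id}_{m'},0)$, discard the last $m-m'$ rows (which contribute no constraint since $l_{m'+1}=\dots=l_m=0$), and count block by block. The one cosmetic difference is that the paper parametrizes the stabilizer directly with the bottom-right block in $\GL(n-m',\cO_\nu/\pi_\nu^l\cO_\nu)$ (which suffices, since $B\equiv 0\pmod{\pi_\nu}$ makes $D$ and $D-CA^{-1}B$ simultaneously invertible), whereas you route through the Schur complement; both are valid and give the same count.
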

\begin{proof}
Without loss of generality, we may assume that $P$ is the matrix
\begin{equation*}
\begin{pmatrix}
1 &  &  &  & & \\
   & \ddots &  & &  & \\
   &            & 1 & &  &  \\
\end{pmatrix}
\end{equation*}
whose $(i,i)$-entries are $1$ for $i=1, \ldots, m$, and the rest of the entries are zero. Then the stabilizer consists of matrices of the form $A+B$, where $A$ is of the form
\begin{equation*}
\begin{pmatrix}
\mathrm{Id}_{m' \times m'} & \\
\mathrm{Mat}_{(n-m') \times m'}(\cO_\nu/\pi_\nu^l\cO_\nu) & \GL(n-m',\cO_\nu / \pi_\nu^l\cO_\nu)
\end{pmatrix},
\end{equation*}
and $B$ is an $n \times n$ matrix whose $i$-th row is an element of $(\pi_\nu^{l_i}\cO_\nu / \pi_\nu^l\cO_\nu)^n$ for $i=2, \ldots, m'$, and all the remaining entries are zero. It is clear that the set of such matrices has the said order.
\end{proof}

\begin{lemma}\label{lemma:cong3}
Continue with the notations and assumptions above. Then we have
\begin{equation} \label{eq:vci}
\frac{\left|\mathrm{stab}(P + \diag(\pi_\nu^{l_1}, \ldots, \pi_\nu^{l_m})\cO_\nu^{nm})\right|}{\left|\GL(n,\cO_\nu / \pi_\nu^l\cO_\nu)\right|}\zeta_\nu(n)^{-1} \cdots \zeta_\nu(n-m'+1)^{-1} = q^{-n(l_1 + \ldots + l_{m'})}.
\end{equation}
\end{lemma}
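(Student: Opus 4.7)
The plan is to substitute the explicit formulas from Lemmas \ref{lemma:cong1} and \ref{lemma:cong2} into the ratio on the left of \eqref{eq:vci} and then track powers of $q$. Everything reduces to bookkeeping, organized so that the ``big'' products of $(q^j - 1)$ factors cancel cleanly against the factor $\zeta_\nu(n)^{-1} \cdots \zeta_\nu(n-m'+1)^{-1}$, leaving only a power of $q$.

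Concretely, I would first write
\begin{equation*}
|\GL(n,\cO_\nu/\pi_\nu^l\cO_\nu)| = q^{(l-1)n^2}q^{n(n-1)/2}\prod_{j=1}^{n}(q^j - 1),
\end{equation*}
and similarly for $|\GL(n-m',\cO_\nu/\pi_\nu^l\cO_\nu)|$, by pulling out a factor of $q^i$ from $(q^n - q^i)$. Then the ratio of the two $\GL$-orders is
\begin{equation*}
q^{(l-1)[n^2-(n-m')^2]}\,q^{[n(n-1)-(n-m')(n-m'-1)]/2}\prod_{j=n-m'+1}^{n}(q^j-1).
\end{equation*}
Next, I use $\zeta_\nu(n-i)^{-1} = (q^{n-i}-1)/q^{n-i}$, so that
\begin{equation*}
\zeta_\nu(n)^{-1}\cdots\zeta_\nu(n-m'+1)^{-1} = q^{-m'(2n-m'+1)/2}\prod_{j=n-m'+1}^{n}(q^j-1).
\end{equation*}
This last product appears in the denominator of the ratio from Lemma \ref{lemma:cong2} divided by Lemma \ref{lemma:cong1}, so it cancels against the one produced by the $\zeta_\nu$ factors. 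All that remains is to verify that the resulting exponent of $q$ equals $-n(l_1 + \cdots + l_{m'})$.

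The bookkeeping of exponents is the only thing that could plausibly go wrong, so let me outline the terms that combine. From Lemma \ref{lemma:cong2} we pick up $q^{lm'(n-m') + n[(l_1 - l_2) + \cdots + (l_1 - l_{m'})]} = q^{lm'(n-m') + n(m'-1)l_1 - n(l_2 + \cdots + l_{m'})}$. Combining with the $q$-exponents from the ratio of $\GL$-orders and from the $\zeta_\nu$ factors computed above, a short calculation collapses $(l-1)[n^2 - (n-m')^2]$, $\frac{1}{2}[n(n-1) - (n-m')(n-m'-1)] - \frac{1}{2}m'(2n-m'+1)$ and $lm'(n-m')$ to $-lnm' + n(m'-1)l_1$. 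Since $l = l_1$, this simplifies to $-nl_1$, which combined with $-n(l_2 + \cdots + l_{m'})$ gives the desired $-n(l_1 + \cdots + l_{m'})$.

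The ``main obstacle'' here is purely computational---making sure the exponents of $q$ are added up correctly. There is no conceptual difficulty, since all ingredients have been established by the two preceding lemmas; the content of the lemma is really that the natural combinatorial factor on the left has a clean closed form, and the cancellation of the $\prod_{j=n-m'+1}^n(q^j - 1)$ between the ratio of $\GL$-orders and the $\zeta_\nu$ product is exactly what forces this to happen.
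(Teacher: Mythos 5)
Your approach is exactly the paper's: substitute the formulas from Lemmas \ref{lemma:cong1} and \ref{lemma:cong2} into the left side of \eqref{eq:vci}, observe that the products $\prod_{j=n-m'+1}^n(q^j-1)$ cancel against those supplied by the $\zeta_\nu$ factors, and then verify that the remaining exponent of $q$ equals $-n(l_1+\cdots+l_{m'})$. (The paper packages the cancellation a bit differently, writing the ratio of the $\GL$-orders directly as $q^{-m'(n-m')-nm'}\zeta_\nu(n)\cdots\zeta_\nu(n-m'+1)$ rather than first extracting the factor $\prod(q^j-1)$, but this is cosmetic.)

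There is, however, a sign slip in your bookkeeping. The quantity being evaluated has $|\mathrm{stab}|$ (which contains $|\GL(n-m',\cdot)|$) in the numerator and $|\GL(n,\cdot)|$ in the denominator, so the relevant ratio is $|\GL(n-m',\cdot)|/|\GL(n,\cdot)|$ --- the reciprocal of the ratio you displayed. Consequently the exponents $(l-1)[n^2-(n-m')^2]$ and $\tfrac{1}{2}[n(n-1)-(n-m')(n-m'-1)]$ must enter the final sum with a minus sign. As written your claimed collapse
\begin{equation*}
(l-1)[n^2-(n-m')^2]+\tfrac{1}{2}[n(n-1)-(n-m')(n-m'-1)]-\tfrac{1}{2}m'(2n-m'+1)+lm'(n-m') = -lnm'
\end{equation*}
is false (e.g.\ $m'=1$, $l=2$, $n=3$ gives $14$ on the left, $-6$ on the right). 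With both of the first two terms negated the identity does hold and the argument goes through exactly as you intend. This is a genuine error in the write-up, but a trivial one to fix; the strategy and final conclusion are correct.
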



\begin{proof}
By the previous two lemmas,
\begin{align*}
&\frac{\left|\mathrm{stab}(P + \diag(\pi_\nu^{l_1}, \ldots, \pi_\nu^{l_m})\cO_\nu^{nm})\right|}{\left|\GL(n,\cO_\nu / \pi_\nu^l\cO_\nu)\right|} \\
&= \frac{q^{lm'(n-m')+(l-1)(n-m')^2}\prod_{i=2}^{m'}q^{n(l_1-l_i)}(q^{n-m'}-1) \cdots (q^{n-m'} - q^{n-m'-1})}{q^{(l-1)n^2}(q^n-1) \cdots (q^n-q^{n-1})} \\
&= q^{lm'(n-m')+(l-1)(n-m')^2-(l-1)n^2}\prod_{i=2}^{m'}q^{n(l_1-l_i)} \cdot q^{-m'(n-m')-nm'}\zeta_\nu(n) \cdots \zeta_\nu(n-m'+1).
\end{align*}
Thus the left-hand side of \eqref{eq:vci} equals
\begin{align*}
&q^{lm'(n-m')+(l-1)(n-m')^2-(l-1)n^2}\prod_{i=2}^{m'}q^{n(l_1-l_i)} \cdot q^{-m'(n-m')-nm'} \\
&=q^{-n(l_1 + \ldots + l_{m'})}q^{m'nl} \cdot q^{lm'(n-m')+(l-1)(n-m')^2-(l-1)n^2-m'(n-m')-nm'} \\
&=q^{-n(l_1 + \ldots + l_{m'})}q^{lm'(2n-m') +(l-1)(-2nm'+{m'}^2) -2nm' + {m'}^2} \\
&=q^{-n(l_1 + \ldots + l_{m'})},
\end{align*}
as desired.
\end{proof}

Recalling that $|K_\nu: K_\nu(e_\nu)| = |\GL(n,\cO_\nu / \pi_\nu^{b-a_1}\cO_\nu)|$ and $\alpha_\nu^{nm}(P_{m'}) = \zeta_\nu(n)^{-1} \cdots \zeta_\nu(n-m'+1)^{-1}$, Lemma \ref{lemma:cong3} implies
\begin{align*}
\frac{\sigma_\nu \alpha_\nu^{nm}(S_\nu)}{|K_\nu : K_\nu(e_\nu)|} =q^{-n(l_1+\ldots l_{m'})}\frac{\alpha_\nu^{nm}(S_\nu)}{\alpha_\nu^{nm}(P_{m'})} = q^{-nmb} = \alpha_\nu^{nm}(v_\nu + \eta_\nu\cO_\nu^{nm}),
\end{align*}
as desired.

\subsection{Proof of Theorem \ref{thm:intro_main}}

We will prove, for a Borel integrable $f : (\A_F^n)^k \rightarrow \R$,
\begin{equation} \label{eq:last}
\int_{\mathcal X_n} \sum_{X \in (F^n)^k \atop \mathrm{indep.}} f(Xg) d\mu = \int_{(\A_F^n)^k} f(X) d\alpha_F^{nk}
\end{equation}
by a series of reductions to smaller families of functions, until we reach those functions covered by Theorem \ref{thm:cong}. 

First of all, it is clear that we may assume $f$ is nonnegative, since the general $f$ is a difference of nonnegative functions. It is also clear that we may assume $f$ is compactly supported and bounded: choose a countable sequence of compact measurable subsets $S_1 \subseteq S_2 \subseteq \ldots$ of $(\A_F^n)^k$ such that $\bigcup_N S_N = (\A_F^n)^k$, and define $f_N(X) = \mathbf{1}_{S_N}(X) \cdot \min(f(X),N)$. Then by the monotone convergence theorem, if \eqref{eq:last} holds for all $f_N$, then it holds for $f$ too.

A compactly supported and bounded measurable function $f$ can in turn be approximated pointwise by compactly supported simple functions. Both are dominated by a Riemann integrable function e.g. $\|f\|_\infty$ times the characteristic function of any open set containing $\mathrm{supp}\,f$, hence the dominated convergence theorem applies, implying that \eqref{eq:last} holds for $f$ if it holds for the simple functions. Such a simple function, in turn, can be approximated pointwise by a finite linear combinations of the characteristic functions of the sets of the form $S_\f \times S_\infty$, where $S_\f = v + \eta\prod_{\nu \mid \f}\cO_\nu^{nk} \subseteq (\A_\f^n)^k$ and $S_\infty \subseteq (\A_\infty^n)^k$ is a bounded measurable set.

In summary, we reduced our task to showing \eqref{eq:last} for the functions $f$ of the form $f = \mathbf{1}_{S_\f}\mathbf{1}_{S_\infty}$. The only remaining obstruction to applying Theorem \ref{thm:cong} to this $f$ is that $\mathbf{1}_{S_\infty}$ may not be Riemann integrable. We choose a sequence of bounded, compactly supported continuous --- therefore Riemann integrable --- functions $f_{\infty,N}$ on $\A_\infty^{nk}$ that converges to $\mathbf{1}_{S_\infty}$ in the $L^1$-norm. Accordingly we write $f_N = \mathbf{1}_{S_\f}f_{\infty,N}$, which of course converges to $f$ in the $L^1$-norm. The pointwise limit of $f_N$ and $f$ can disagree at most on a null set, say $Z$. However, the set
\begin{equation*}
\{ g \in G_n : F^{nm}g \cap Z \neq \phi \}
\end{equation*}
has $\mu_n$-measure zero, and hence the convergence
\begin{equation*}
\sum_{X \in (F^n)^k \atop \mathrm{no\, zero\, rows}} f_N(Xg) \rightarrow \sum_{X \in (F^n)^k \atop \mathrm{no\, zero\, rows}} f(Xg)
\end{equation*}
holds for $\mu_n$-almost every $g$. Hence we can cite the dominated convergence theorem and conclude that \eqref{eq:last} holds for $f$. This completes the proof of the first formula claimed by Theorem \ref{thm:intro_main}.

The other formula of Theorem \ref{thm:intro_main} follows from the previous and the equality
\begin{equation*}
\sum_{X \in (F^n)^k \atop \mathrm{no\, zero\, rows}} f(X) = \sum_{m=1}^{k}\sum_D\sum_{X \in (F^n)^m \atop \mathrm{indep.}} f(D^\mathrm{tr} X),
\end{equation*}
where $D$ runs over all $m \times k$ row-reduced echelon forms over $F$ of rank $m$. More precisely, suppose $f \geq 0$ without loss of generality, and consider the sum
\begin{equation*}
\sum_{m=1}^{k}\sum_{D_N} f(D_N^\mathrm{tr} X),
\end{equation*}
where $D_N$ runs over all $m \times k$ row-reduced echelon forms over $F$ of rank $m$, each of whose nonzero entries, say $z$, satisfies (i) $|\ord_\nu z_\nu| \leq N/N(\nu)$ (ii) $\|\Log(z)\| \leq N$. Then this expression is a finite sum that is pointwise monotonically increasing in $N$. Of course, the same can be said of
\begin{equation*}
\sum_{m=1}^{k}\sum_{D_N}\sum_{X \in (F^n)^m \atop \mathrm{indep.}} f(D_N^\mathrm{tr} Xg).
\end{equation*}
Hence we can invoke the monotone convergence theorem --- twice, for $\A_F$ and $\mathcal X_n$ respectively --- to complete the proof.

\section{Second moment estimates}

\subsection{Proof of Theorem \ref{thm:intro_second}}

We recall the setting: $n \geq 3$, and $f : \A_F^n \rightarrow \R$ is a nonnegative function of the form $f = f_\f f_\infty$, where $f_\f$ is the characteristic function of an integrable set $\prod_{\nu \in \f} A_\nu \subseteq \A_\f^n$, and $f_\infty$ satisfies, for any $\gamma \in F^*$ and a constant $C$, a bound of the form
\begin{equation*}
\int_{\A_\infty^n} f_\infty(x)f_\infty(\gamma x)d\alpha_\infty^n \leq C\alpha^n_\infty(f_\infty)\min(1, \min_{\sigma \mid \infty} \|\gamma\|_\sigma^{-1})^n.
\end{equation*}

A few lemmas are in order.

\begin{lemma} \label{lemma:unitcount}
For $\gamma \in F^*$ and $A \subseteq \R$, let
\begin{equation*}
M(\gamma, A) = \frac{1}{w_F}\mbox{(\# $u \in \cO_F^*$ such that the largest coordinate of $\mathrm{Log}(\gamma u)$ is in $A$)}.
\end{equation*}
Then, if $A = (-\infty, k]$ and $(r+1)k - \log |N\gamma| \geq 0$,
\begin{equation*}
M(\gamma, A) = \frac{1}{R_F} \cdot \frac{\sqrt{r+1}}{r!}\left((r+1)k - \log |N\gamma|\right)^r + O_F(\left((r+1)k - \log |N\gamma|\right)^{r-1}).
\end{equation*}
If $(r+1)k - \log |N\gamma| < 0$ then $M(\gamma, A) = 0$. In particular, for general $A \subseteq \R$ bounded from above, $M(\gamma, A)$ is finite.
\end{lemma}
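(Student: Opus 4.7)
The plan is to translate the counting problem to a lattice-point count in an explicit simplex inside the hyperplane $H = \{x \in \R^{r+1} : \sum_i x_i = 0\}$ containing the unit lattice $\Lambda := \Log(\cO_F^*)$. Since $\Log$ restricted to $\cO_F^*$ has kernel $\mu_F$ of order $w_F$, I get
\[
M(\gamma, A) = \#\{v \in \Lambda : \max_i (\Log(\gamma)_i + v_i) \in A\}.
\]
By the paper's convention, $\Lambda$ has covolume $R_F$ in $H$ with respect to the Euclidean metric induced from $\R^{r+1}$.

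For $A = (-\infty, k]$, the condition on $v \in \Lambda$ becomes $v_i \leq c_i$ for all $i$, where $c_i := k - \Log(\gamma)_i$. Note $\sum_i c_i = (r+1)k - \log|N\gamma| =: T$, using that $\sum_i \Log(\gamma)_i = \log|N\gamma|$. Setting $w_i = c_i - v_i$, the relation $\sum_i v_i = 0$ forces $\sum_i w_i = T$ and $w_i \geq 0$, so the map $v \mapsto w$ is an isometry from the region $R := \{v \in H : v_i \leq c_i\,\forall i\}$ onto the simplex $S_T := \{w \in \R^{r+1} : w_i \geq 0,\, \sum_i w_i = T\}$ sitting in the affine hyperplane $\{\sum w_i = T\}$. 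In particular, if $T < 0$ then $R = \emptyset$ and $M(\gamma, A) = 0$, which handles the second assertion.

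Assuming $T \geq 0$, the $r$-dimensional volume of $S_T$ equals $T^r$ times the volume of the standard simplex in the hyperplane $\{\sum w_i = 1\}$, which is $\sqrt{r+1}/r!$, so $\mathrm{vol}_H(R) = \sqrt{r+1}\, T^r/r!$. Applying a standard Lipschitz-type lattice point estimate for convex bodies (which, since $R$ is a translate of a scaling of a fixed simplex, gives an error proportional to the surface area $O(T^{r-1})$ with implicit constant depending only on $F$), I conclude
\[
M(\gamma, A) = \frac{\mathrm{vol}_H(R)}{R_F} + O_F(T^{r-1}) = \frac{\sqrt{r+1}}{r!\, R_F}\, T^r + O_F(T^{r-1}),
\]
as claimed. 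For a general $A$ bounded from above by $K$, monotonicity gives $M(\gamma, A) \leq M(\gamma, (-\infty, K])$, which is finite by what was just shown.

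The only genuinely nontrivial step is securing the error term uniformly in $\gamma$: the shape of $R$ depends on $\gamma$ only through translation and the scaling parameter $T$, so a direct appeal to Davenport-type counting bounds (or the observation that $S_T$ is a homothet of a fixed simplex) makes the implied constant depend solely on $F$.
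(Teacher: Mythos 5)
Your proof is correct and takes essentially the same route as the paper's: identify $M(\gamma,(-\infty,k])$ with a count of points of the translated unit lattice in a simplex of volume $\sqrt{r+1}\,T^r/r!$ inside the trace-zero hyperplane, then invoke a standard lattice-point estimate. Your explicit change of variables $w_i = c_i - v_i$ to reduce to the standard scaled simplex is a cleaner way of arriving at the same volume the paper computes via the regular-simplex side-length formula, but the substance is identical.
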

\begin{proof}
$M(\gamma,A)$ is precisely the number of points of the unit lattice of $F$, translated by $\mathrm{Log}\ \gamma$, whose maximum coordinate is at most $k$. If $(r+1)k < \log |N\gamma|$, then there exists no such points. If $(r+1)k \geq \log |N\gamma|$, this is the number of the points of the translate of the unit lattice inside the simplex formed by connecting the vertices $(k, \ldots, k, -rk + \log |N\gamma|), (k, \ldots, k, -rk + \log |N\gamma|, k),$ and so on, which is a regular $r$-simplex of side length $\sqrt{2}((r+1)k - \log |N\gamma|)$. The proof follows from the volume formula of a regular simplex and a standard lattice-point counting estimate.
\end{proof}

\begin{lemma} \label{lemma:2termunitest}
Fix $\gamma \in F^*$. Then
\begin{equation*}
\sum_{u \in \cO^*_F} \int_{\A_\infty^n} f_\infty(x)f_\infty(\gamma ux) d\alpha_\infty^n = O_F( (1 + |\log |N\gamma||^r) C\alpha_\infty^n(f_\infty)).
\end{equation*}

\end{lemma}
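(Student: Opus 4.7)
\textbf{Proof proposal for Lemma \ref{lemma:2termunitest}.}

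The plan is to reduce the sum over $\cO_F^*$ to a one-dimensional sum over ``shells'' determined by the largest coordinate of $\Log(\gamma u)$, which is precisely the quantity counted by Lemma \ref{lemma:unitcount}. The first step is to rewrite the hypothesized bound \eqref{eq:bdcdn} in terms of $\Log$. For $x \in F^*$, set $b(x) := \max_{\sigma \mid \infty} \log\|x\|_\sigma$, the largest coordinate of $\Log(x)$. Since $\min_\sigma \|x\|_\sigma^{-1} = (\max_\sigma \|x\|_\sigma)^{-1}$, the factor $\min(1,\min_\sigma \|x\|_\sigma^{-1})^n$ equals $e^{-n\max(0,b(x))}$. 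Applying \eqref{eq:bdcdn} with $\gamma$ replaced by $\gamma u$ therefore yields
\[
\int_{\A_\infty^n} f_\infty(x) f_\infty(\gamma u x)\, d\alpha_\infty^n \;\leq\; C\alpha_\infty^n(f_\infty)\, e^{-n\max(0,\, b(\gamma u))}
\]
for every $u \in \cO_F^*$.

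Next I will partition $\cO_F^*$ into shells by the value of $b(\gamma u)$. Set $c := \log|N\gamma|$, let $U_{-} = \{u : b(\gamma u) \leq 0\}$ and, for each integer $l \geq 0$, let $U_l = \{u : b(\gamma u) \in (l, l+1]\}$. Lemma \ref{lemma:unitcount} gives $|U_-| = w_F\, M(\gamma, (-\infty, 0])$ and $|U_l| \leq w_F\, M(\gamma, (-\infty, l+1])$. Thus
\[
\sum_{u \in \cO_F^*} e^{-n\max(0,b(\gamma u))} \;\leq\; |U_-| + \sum_{l \geq 0} e^{-nl}\,|U_l|,
\]
so it suffices to show this last quantity is $O_F(1 + |c|^r)$.

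The estimate in Lemma \ref{lemma:unitcount} reads $M(\gamma, (-\infty, k]) = O_F\!\bigl(((r+1)k - c)^r + 1\bigr)$ when $(r+1)k \geq c$, and $0$ otherwise. I split into two cases. When $c \leq 0$, $|U_-| = O_F(1 + |c|^r)$, and $|U_l| = O_F\!\bigl(((r+1)(l+1)+|c|)^r\bigr) = O_F\!\bigl((l+1)^r + |c|^r\bigr)$, whence summability of $\sum_l e^{-nl}(l+1)^r$ gives a total of $O_F(1 + |c|^r)$. When $c > 0$, $U_-$ is empty and $U_l$ is empty for $l < l_0 := \lceil c/(r+1) - 1\rceil$; for $l \geq l_0$ one has $(r+1)(l+1) - c \leq (r+1)(l - l_0 + 1)$, so
\[
\sum_{l \geq l_0} e^{-nl}\,|U_l| \;\leq\; e^{-nl_0} \sum_{l' \geq 0} e^{-nl'} O_F((l'+1)^r) \;=\; O_F(1).
\]
Either way the sum is $O_F(1 + |c|^r)$, and multiplying by $C\alpha_\infty^n(f_\infty)$ gives the claimed bound.

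The main (minor) obstacle I foresee is ensuring the correct identification of $\min_\sigma \|\gamma u\|_\sigma^{-1}$ with $(\max_\sigma \|\gamma u\|_\sigma)^{-1}$, so that the decay factor in \eqref{eq:bdcdn} is genuinely controlled by the \emph{largest} coordinate of $\Log(\gamma u)$, matching the statistic that Lemma \ref{lemma:unitcount} counts; once this identification is made, the geometric decomposition into shells and the polynomial counts from Lemma \ref{lemma:unitcount} fit together cleanly, and the factor $|\log|N\gamma||^r$ in the final estimate emerges precisely from the dependence of the unit-count formula on $\log|N\gamma|$ in the regime $c \leq 0$.
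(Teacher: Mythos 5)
Your proof is correct and follows essentially the same route as the paper: both rewrite the hypothesis \eqref{eq:bdcdn} as a decay factor $e^{-n\max(0,b(\gamma u))}$, decompose $\cO_F^*$ into shells indexed by the largest coordinate of $\Log(\gamma u)$, and invoke Lemma \ref{lemma:unitcount} to control the shell sizes. The only (cosmetic) differences are that you bound $|U_l|$ by the cumulative count $w_F\,M(\gamma,(-\infty,l+1])$ and split on the sign of $\log|N\gamma|$, whereas the paper uses the shell counts $M(\gamma,(k,k+1])$ directly together with the cruder uniform estimate $M(\gamma,(x,x+1]) = O_F\bigl((1+|\log|N\gamma||^r)x^{r-1}\bigr)$; both bookkeeping schemes land on the same $O_F(1+|\log|N\gamma||^r)$ total.
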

\begin{proof}
By assumption, for each $u \in \mathcal{O}^*_F$, the integral inside the sum is bounded by the smaller of
\begin{equation*}
C\alpha_\infty^n(f_\infty) \mbox{ or } C a^n \alpha_\infty^n(f_\infty),
\end{equation*}
where $a = \min_\sigma \|\gamma u\|_\sigma^{-1}$.
If $a \geq 1$, or equivalently, if the largest coordinate of $\mathrm{Log}(\gamma u)$ is at most $0$, we bound the integral by the former. For an integer $k \geq 0$, if $a \in [e^{-(k+1)}, e^{-k})$, or equivalently, if the largest coordinate of $\mathrm{Log}(\gamma u)$ is in $(k, k + 1]$, we bound the integral by $Ce^{-nk}\alpha_\infty^n(f_\infty)$. Thus the left-hand side of the claimed equality is at most $w_FC\alpha_\infty^n(f_\infty)$ times
\begin{equation*}
M(\gamma, (-\infty, 0]) + \sum_{k = 0}^\infty M(\gamma, (k, k + 1]) e^{-nk}.
\end{equation*}
From Lemma \ref{lemma:unitcount} it follows that
\begin{equation*}
M(\gamma, (-\infty, 0]) = O_F(1 + |\log |N\gamma||^r),
\end{equation*}
and also that
\begin{equation*}
M(\gamma, (x, x+1]) = O_F((1 + |\log |N\gamma||^r)x^{r-1}).
\end{equation*}
But then
\begin{equation*}
\sum_{k=0}^\infty k^{r-1}e^{-nk} = O_F(1),
\end{equation*}
which completes the proof.
\end{proof}

Now Theorem \ref{thm:intro_main} implies that
\begin{equation*}
\int_{{\mathcal X}_n} \left( \sum_{x \in F^n \backslash \{0\}} f(xg) \right)^2 d\mu = \int_{\A^{2n}} f(x_1) f(x_2) d\alpha^n(x_1)d\alpha^n(x_2) + \sum_{c \in F^*} \int_{\A^n} f(x)f(cx) d\alpha^n(x).
\end{equation*}
On the right-hand side, the former integral is simply $\alpha_F(f)^2$. It remains to show that the latter sum of integrals is $O_F(C\alpha_F(f))$. Let us rewrite it as
\begin{equation*}
2 \sum_{c \in F^* \atop N(c) \leq 1} \int_{\A^n} f(x)f(cx) d\alpha^n.
\end{equation*}
Thanks to the exact sequence
\begin{equation*}
0 \rightarrow \cO^*_F \rightarrow F^* \rightarrow \mathrm{Prin}_F \rightarrow 0
\end{equation*}
where $\mathrm{Prin}_F$ denotes the set of all principal fractional ideals of $F$, we can rewrite the above as
\begin{equation} \label{eq:oink}
2|\Delta_F|^{-\frac{n}{2}}\sum_{I \in \mathrm{Prin}_F \atop N(I) \leq 1} \sum_{u \in \mathcal{O}^*_F} \int_{\mathbb{A}^n_{\f}} f_\f(x_\f)f_\f((\gamma(I) u x)_\f) d\alpha^n_\f \int_{\mathbb{A}^n_\infty} f_\infty(x_\infty) f_\infty((\gamma(I) ux)_\infty) d\alpha^n_\infty,
\end{equation}
where $\gamma(I) \in F^*$ is a choice of a generator of $I$. Since two different choices of $\gamma(I)$ differ by an element of $\cO_F^*$, \eqref{eq:oink} is well-defined.

Any $I \in \mathrm{Prin}_F$ has the unique factorization $I = pq^{-1}$, where $p,q$ are coprime integral ideals of $F$ of the same ideal class. Therefore, \eqref{eq:oink} is, up to a constant factor depending on $F$ only,
\begin{equation}\label{eq:2ndmtp1}
\sum_q \sum_p \sum_{u \in \mathcal{O}^*_F} \int_{\mathbb{A}^n_{\f}} f_\f(x_\f)f_\f((\gamma u x)_\f) d\alpha^n_\f \int_{\mathbb{A}^n_\infty} f_\infty(x_\infty) f_\infty((\gamma ux)_\infty) d\alpha^n_\infty,
\end{equation}
where $q$ runs over all integral ideals of $F$, and $p$ runs over the integral ideals coprime to $q$ and in the same ideal class as $q$, and $Np \leq Nq$; $\gamma = \gamma(pq^{-1})$ here.

To handle the integral over $\A_\f^n$, we observe that, for any $\delta_\nu \in F_\nu^*$,
\begin{align}
\int_{F_\nu^n} f_\nu(x_\nu)f_\nu(\delta_\nu x_\nu) d\alpha_\nu^n &= \alpha_\nu^n(A_\nu \cap \delta_\nu^{-1} A_\nu) \notag \\
& \leq \min(\alpha^n_\nu(A_\nu), \|\delta_\nu\|^{-n}_\nu \alpha^n_\nu(A_\nu)). \label{eq:oink2}
\end{align}

Hence, \eqref{eq:oink} is bounded by a constant term times
\begin{equation*}
\sum_q \frac{1}{(Nq)^n} \sum_p \sum_{u \in \mathcal{O}^*_F} \alpha^n_\f(f_\f)\int_{\mathbb{A}^n_\infty} f_\infty(x_\infty) f_\infty((\gamma ux)_\infty) d\alpha^n_\infty,
\end{equation*}

We now apply Lemma \ref{lemma:2termunitest} to bound this by
\begin{equation*}
\sum_q \frac{1}{(Nq)^n} \sum_p O_F\left((1 + |\log |N\gamma||^r)C\alpha^n_F(f) \right).
\end{equation*}

It remains to show that
\begin{equation*}
\sum_q \frac{1}{(Nq)^n} \sum_p (1 + |\log |N\gamma||^r) = O_F(1).
\end{equation*}
By the Dedekind-Weber theorem, the left-hand side is bounded by a constant (depending on $F$) times
\begin{equation*}
\sum_q \frac{1}{(Nq)^{n-1}}(1 + \log^r Nq) \ll \zeta_F(n-3/2),
\end{equation*}
as desired, completing the proof of Theorem \ref{thm:intro_second}

\subsection{The case of balls and annuli}

In what follows, $V_n$ denotes the volume of the unit ball in $\R^n$. In accordance with the metric we assigned on $\A_\infty$, the (closed) annulus at origin of radii $R_1 < R_2$ in $\A_\infty^n$ is the set of $x \in \A_\infty^n$ satisfying
\begin{equation*}
R_1^2 \leq \sum_{\sigma\ \mathrm{real}} |x_{\sigma}|^2 + 2\sum_{\sigma\ \mathrm{cplx}} |x_\sigma|^2 \leq R_2^2,
\end{equation*}
where $|\cdot|$ is the standard Euclidean metric on $\R^n$ or $\C^n$, as appropriate. $R_1 = 0$ is permitted, in which case the annulus is in fact a ball of radius $R_2$.
With respect to the measure $\alpha_\infty^n$, a ball of radius $R$ has volume $V_{nd}R^{nd}$, same as the volume of the ball of radius $R$ in $\R^{nd}$ with respect to the standard Euclidean metric.

The purpose of this section is to prove the following lemma, which shows that Theorem \ref{thm:intro_second} is applicable when $f_\infty$ is the characteristic function of an annulus at origin.


\begin{lemma} \label{lemma:2termest}
Let $n \geq 2$, and let $f_\infty$ be the characteristic function of an annulus on $\mathbb{A}_\infty^n$ centered at origin.
Then for $\gamma \in F^*$,
\begin{equation*}
\int_{\mathbb{A}_\infty^n} f_\infty(x)f_\infty(\gamma x) d\alpha_\infty^n \leq \alpha_\infty^n(f_\infty) \min\left(1, de\min_{\sigma \mid \infty} \|\gamma\|_\sigma^{-1} \right)^n.
\end{equation*}

\end{lemma}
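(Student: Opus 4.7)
The plan is to slice the annulus by the coordinates at the infinite place $\sigma_0$ where $\|\gamma\|_\sigma$ attains its maximum $M$, apply Fubini, and exploit a monotonicity property that makes the resulting bound uniform in $R_1$ and $R_2$.

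When $M \leq de$ the bound reduces to the trivial $\alpha_\infty^n(f_\infty)$, since $f_\infty \leq 1$; so assume $M > de$. Fix $\sigma_0$ with $\|\gamma\|_{\sigma_0} = M$, let $d_0 \in \{1,2\}$ be its local degree, and identify $\A_\infty^n$ with $\R^{nd}$ by rescaling each complex coordinate by $\sqrt{2}$; under this identification $\alpha_\infty^n$ becomes Lebesgue measure, $A$ becomes the standard Euclidean annulus $\{R_1 \leq |u| \leq R_2\}$, and multiplication by $\gamma$ scales the $n_0 := nd_0$ coordinates $y$ at $\sigma_0$ by $|\gamma_{\sigma_0}| = M^{1/d_0}$. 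Writing $z$ for the remaining $m := n(d - d_0)$ coordinates, the inequality $|\gamma u|^2 \geq M^{2/d_0}|y|^2$ combined with $|\gamma u| \leq R_2$ forces $|y| \leq R_2/M^{1/d_0}$.

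Applying Fubini, then polar coordinates on $y$, then substituting $r = R_2 s$, one obtains
\begin{equation*}
\frac{\vol(A \cap \gamma^{-1}A)}{\vol(A)} \leq \frac{\int_0^{1/M^{1/d_0}} s^{n_0-1} g(s)\, ds}{\int_0^1 s^{n_0-1} g(s)\, ds},
\end{equation*}
where $g(s) := (1-s^2)^{m/2} - (a^2 - s^2)_+^{m/2}$ and $a := R_1/R_2 \in [0,1)$. A direct derivative computation shows $g$ is non-increasing on $[0,1]$ whenever $m \geq 2$; the degenerate case $m = 0$ (meaning $F = \Q$ or $F$ imaginary quadratic) is handled by a one-line scaling argument. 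Hence the numerator is at most $g(0)\cdot(1/M^{1/d_0})^{n_0}/n_0 = (1-a^m)/(n_0 M^n)$, while the denominator evaluates via the beta function to $V_{nd}(1-a^{nd})/(n_0 V_{n_0} V_m)$. Assembling,
\begin{equation*}
\frac{\vol(A \cap \gamma^{-1}A)}{\vol(A)} \leq \frac{V_{n_0} V_m}{V_{nd}}\cdot\frac{1-a^m}{1-a^{nd}}\cdot\frac{1}{M^n}.
\end{equation*}
The decisive cancellation is $(1-a^m)/(1-a^{nd}) \leq 1$, which follows from $m \leq nd$ and $a \in [0,1)$; this is exactly what makes the estimate uniform across thin and thick annuli.

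Finally, $V_{n_0} V_m / V_{nd}$ equals the generalized binomial coefficient $\binom{nd/2}{n_0/2}$. The Stirling inequality $\binom{p+q}{p} \leq (p+q)^{p+q}/(p^p q^q)$ (valid for $p,q \geq 1$, via Robbins's form of Stirling) simplifies to $V_{n_0} V_m / V_{nd} \leq (d^d/(d_0^{d_0} (d - d_0)^{d - d_0}))^{n/2}$; combined with $(1 + 1/k)^k \leq e$, this yields $\leq (de)^{n/2}$ when $\sigma_0$ is real and $\leq (de/2)^n$ when $\sigma_0$ is complex, both bounded by $(de)^n$. The main delicacy in carrying out the plan is verifying the monotonicity of $g$ across its piecewise junction at $s = a$ for general (non-integer) $m/2$; this is resolved cleanly by analyzing $g$ separately on $[0,a]$ and $[a,1]$ and checking continuity at the junction.
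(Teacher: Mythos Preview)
Your proof is correct and follows essentially the same approach as the paper's: slice at the archimedean place $\sigma_0$ where $\|\gamma\|_\sigma$ is maximal, bound the slice volume of the annulus by its value at the origin (your monotonicity of $g$; the paper's observation that $(R_2^2-r^2)^{n(d-1)/2}-(R_1^2-r^2)_+^{n(d-1)/2}$ is maximized at $r=0$), integrate over the projection, and finish with a Stirling estimate for the ratio $V_{n_0}V_m/V_{nd}$. Your presentation via the normalized function $g(s)$ and the ratio $(1-a^m)/(1-a^{nd})\le 1$ is a bit more streamlined than the paper's direct inequality $(R_2^{n(d-1)}-R_1^{n(d-1)})R_2^n\le R_2^{nd}-R_1^{nd}$, but the content is identical; likewise your separate treatment of $m=0$ corresponds to the paper's remark that $d=1$ is trivial (extended to the imaginary quadratic case).
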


\begin{proof}

The case $d = 1$ (i.e. $F = \Q$) is trivial, so let us assume $d \geq 2$. Choose a place $\sigma \mid \infty$ minimizing $\|\gamma\|_\sigma^{-1}$, and write $a = \|\gamma\|_\sigma^{-1}$. We assume $a < 1$, since otherwise again the lemma is trivial.

Write $B:= \mathrm{supp}\,f_\infty$, an annulus at origin of radii $R_1 < R_2$. Let $P_\sigma: (F \otimes \R)^n \rightarrow F_\sigma^n$ be the orthogonal projection onto the $\sigma$-coordinate. Also, for $z \in F_\sigma^n$, write $P^\perp_\sigma(B, z) = \{y \in \prod_{l \mid \infty \atop l \neq \sigma} F_l^n: (y, z) \in B\}$ i.e. the ``slice'' of $B$ at $z$. Then we have
\begin{equation} \label{eq:tetete}
\int_{\mathbb{A}_\infty^n} f_\infty(x)f_\infty(\gamma x) d\alpha_\infty^n \leq \int_{P_\sigma(\gamma^{-1}B)} \left( \int_{P^\perp_\sigma(B, z)} \prod_{l \mid \infty \atop l \neq \sigma} d\alpha_l^n \right) d\alpha_{\sigma}^n.
\end{equation}

Assume first that $\sigma$ is real.  Then $P_\sigma(\gamma^{-1}B)$ is the ball in $\R^n$ at origin of radius $aR_2$. On the other hand, if $z$ has length $r$,
\begin{equation*}
\int_{P^\perp_\sigma(B, z)} \prod_{l \mid \infty \atop l \neq \sigma} d\alpha_l^n = V_{n(d-1)}\left((R_2^2 - r^2)^{n(d-1)/2} - \max(0,R_1^2 - r^2)^{n(d-1)/2}\right).
\end{equation*}
From the fact that
\begin{equation*}
\frac{d}{dr}(R^2 - r^2)^{n(d-1)/2} = -rn(d-1)(R^2 - r^2)^{n(d-1)/2 - 1} \leq 0
\end{equation*}
for $r \in (0,R)$, one can deduce that
\begin{equation*}
\int_{P^\perp_\sigma(B, z)} \prod_{l \mid \infty \atop l \neq \sigma} d\alpha_l^n \leq V_{n(d-1)}(R_2^{n(d-1)} - R_1^{n(d-1)}).
\end{equation*}
Thus the right-hand side of \eqref{eq:tetete} is bounded by
\begin{align*}
& V_nV_{n(d-1)}\int_{0}^{a R_2} (R_2^{n(d-1)} - R_1^{n(d-1)}) r^{n-1} dr \\
& \leq \frac{V_nV_{n(d-1)}}{n} a^n (R_2^{n(d-1)} - R_1^{n(d-1)}) R_2^n \\
& \leq \frac{V_nV_{n(d-1)}}{n} a^n (R_2^{nd} - R_1^{nd}) \\
& \leq \frac{V_nV_{n(d-1)}}{nV_{nd}} a^n \alpha_\infty^n(f_\infty).
\end{align*}

If $\sigma$ is complex, then $P_\sigma(\gamma^{-1}B)$ is the ball in $\C^n \cong \R^{2n}$ at origin of radius $aR_2$.
Similarly as in the real case, we obtain the bound
\begin{equation*}
\frac{V_{2n}V_{n(d-2)}}{2nV_{nd}} a^n \alpha_\infty^n(f_\infty)
\end{equation*}
for both cases. By Stirling's formula, one can compute that
\begin{equation*}
\max\left(\frac{V_{n}V_{n(d-1)}}{nV_{nd}}, \frac{V_{2n}V_{n(d-2)}}{2nV_{nd}}\right) < (de)^n.
\end{equation*}

Collecting all our computations so far, we can bound \eqref{eq:tetete} by
\begin{equation*}
(ade)^n\alpha_\infty^n(f_\infty),
\end{equation*}
which is the desired bound.
\end{proof}

\subsection{Estimate for $\mathbb P^{n-1}(F)$}


In this section, we prove Theorem \ref{thm:intro_proj}. Recall that, for $g \in G_n$ we defined $P_B(g)$ to be the number of points on $\mathbb P^{n-1}(F)$ whose twisted height with respect to $g$ is at most $B$.
We will construct the function $f : \A_F^n \rightarrow \R$ such that
\begin{equation*}
P_B(g) = \sum_{x \in F^n \backslash \{0\}} f(B_\infty^{-\frac{1}{d}}xg).
\end{equation*}
where
\begin{equation*}
B_\infty^{-\frac{1}{d}} = (\underbrace{1, \ldots, 1}_{\mbox{\tiny finite places}}, \underbrace{B^{-\frac{1}{d}}, \ldots, B^{-\frac{1}{d}}}_{\mbox{\tiny infinite places}}).
\end{equation*}
Due to the way the height is defined, it suffices to consider the case $B = 1$ only.

Let $h=h_F$ be the class number of $F$, and choose a set of prime ideals $\{I_1, \ldots, I_h\}$ that also serve as the set of the representatives of the ideal classes of $F$. For each $1 \leq i \leq h$, let $a_i$ be the norm $1$ idele such that $(a_i)_\nu = \pi_\nu^{\ord_\nu(I_i)}$ for $\nu \in \f$, and $(a_i)_\sigma = \|N(I_i)^{\frac{1}{d}}\|_\sigma$ for $\sigma \mid \infty$. In addition, let
\begin{equation*}
H = \left\{(x_1, \ldots, x_{r_1+r_2}) \in \R^{r_1+r_2} : \sum_{i=1}^{r_1} x_i + \sum_{i=1}^{r_2} x_{r_1+i} = 0\right\},
\end{equation*}
and choose a fundamental domain $D \subseteq H$ of the unit lattice of $F$, $\mathrm{Pr}:\R^{r_1+r_2} \rightarrow H$ to be the orthogonal projection onto $H$, and define $\Log^n: \A_\infty^n \rightarrow \R^{r_1+r_2}$ by
\begin{equation*}
\Log^n(x) = (\log \|x\|_{\sigma_1}, \ldots, \log \|x\|_{\sigma_{r_1+r_2}}).
\end{equation*}
Then there exists a one-to-$w_F$ correspondence between
\begin{equation*}
\mathbb P^{n-1}(F)\mbox{ and } S(g) :=\left\{(i, x): {1 \leq i \leq h, x \in F^n \backslash \{0\}, a_ixg \mbox{ primitive}, \atop (xg)_\infty \in (\mathrm{Pr} \circ \Log^n)^{-1}(D)}\right\}.
\end{equation*}
To describe the correspondence, take $\bar z \in \mathbb P^{n-1}(F)$ and let $z= (z_1, \ldots, z_n) \in F^n$ be any representative of $\bar z$. Then there exists exactly one $1 \leq i \leq h$ such that $a_izg$ is $F$-equivalent to a primitive element of $\A_F^n$, that is, there exists an element $c \in F^*$, unique up to the units, such that $c a_i zg$ is primitive. To elaborate, for each $\nu \in \f$ let $J_\nu \subseteq F_\nu$ be the fractional ideal generated by the entries of $(zg)_\nu$, and take $J = \prod \nu^{\mathrm{ord}_\nu J_\nu}$; now choose the unique $i$ so that $I_iJ$ is principal --- in fact, $I_iJ = (c^{-1})$. Furthermore, by the definition of $D$, there exists $u \in \cO_F^*$, unique up to the roots of unity of $F$, such that the infinite part of $uca_izg$ lies in $(\mathrm{Pr} \circ \Log^n)^{-1}(D)$. We take $x = ucz$, up to the roots of unity.

Therefore, if $\phi = \phi_\f\phi_\infty$, where $\phi_\f : \A_\f^n \rightarrow \R$ is the characteristic function of the primitive vectors, and $\phi_\infty: \A_\infty^n \rightarrow \R$ is the characteristic function of the set of $x_\infty \in (\mathrm{Pr} \circ \Log^n)^{-1}(D)$ with $H_\infty(x_\infty) \leq 1$, then
\begin{equation*}
\sum_{i=1}^h \phi(a_ixg) = \begin{cases} 1 & \mbox{if $x \in S(g)$ and $H(xg) \leq 1$} \\ 0 & \mbox{otherwise.} \end{cases}
\end{equation*}
Accordingly we let
\begin{equation} \label{eq:pn-1}
f(x) = \frac{1}{w_F} \sum_{i=1}^h \phi(a_ix).
\end{equation}
One may now apply this $f$ to our formulas to study the statistics of the rational points on $\mathbb P^{n-1}$. The formula below may be useful for such purposes; see Schanuel (\cite{Scha79}) or Thunder (\cite{Thu96}) for similar computations. 

\begin{proposition} \label{prop:pn-1}
Let $\phi$ be as above. Then
\begin{equation*}
\int_{\A_F^n} \phi(x) d\alpha^n = \frac{V_n^{r_1}V_{2n}^{r_2}n^{r_1+r_2-1}2^{nr_2}R_F}{|\Delta_F|^{\frac{n}{2}}\zeta_F(n)}.
\end{equation*}
\end{proposition}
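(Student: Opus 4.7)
The plan is to use the product structure $\phi = \phi_\f \phi_\infty$ together with the factorization $\alpha_F = |\Delta_F|^{-1/2}\alpha_\f \alpha_\infty$ to split
\begin{equation*}
\int_{\A_F^n} \phi \, d\alpha^n = |\Delta_F|^{-n/2}\Bigl(\int_{\A_\f^n} \phi_\f \, d\alpha_\f^n\Bigr)\Bigl(\int_{\A_\infty^n} \phi_\infty \, d\alpha_\infty^n\Bigr),
\end{equation*}
and to evaluate each factor separately. The finite factor is immediate: at each $\nu \in \f$, the primitive vectors in $\cO_\nu^n$ form $\cO_\nu^n \setminus (\pi_\nu\cO_\nu)^n$, of $\alpha_\nu^n$-measure $1 - N(\nu)^{-n}$; multiplying across $\nu$ and comparing with the Euler product for $\zeta_F$ gives $\int_{\A_\f^n}\phi_\f\,d\alpha_\f^n = \zeta_F(n)^{-1}$.

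For the infinite factor, I would introduce polar-like coordinates at each infinite place: set $\rho_\sigma = \log H_\sigma(x_\sigma)$, and let $\hat x_\sigma$ parameterize the ``unit sphere'' $\{H_\sigma = 1\}$ in $F_\sigma^n$. A direct change of variables yields $d\alpha_\sigma^n = c_\sigma e^{n\rho_\sigma}\,d\rho_\sigma\, d\omega_\sigma$, where $(c_\sigma,\int d\omega_\sigma) = (1,\, nV_n)$ at real $\sigma$ and $(2^{n-1},\, 2nV_{2n})$ at complex $\sigma$; the $2^{n-1}$ and the exponent $n$ (in place of $2n-1$) both come from the complex doubling conventions for $\alpha_\sigma$ and $H_\sigma$. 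Since $\phi_\infty$ depends only on $(\rho_\sigma)_\sigma \in \R^{r_1+r_2}$, the spherical integrations factor out to $\prod_\sigma c_\sigma\int d\omega_\sigma = 2^{(n-1)r_2}(nV_n)^{r_1}(2nV_{2n})^{r_2}$.

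The remaining integral $\int_{\mathrm{Pr}(\rho) \in D,\ \sum\rho_\sigma \leq 0} e^{n\sum\rho_\sigma}\prod d\rho_\sigma$ I would compute by decomposing $\rho = y + s\mathbf{1}$ with $y \in H$, $s\in\R$, $\mathbf{1} = (1,\ldots,1)$, so that $\sum\rho_\sigma = (r_1+r_2)s$ and the region becomes $y \in D$, $s \leq 0$. The $s$-integral evaluates to $1/(n(r_1+r_2))$, and the $y$-integral to the Euclidean covolume of $\Log(\cO_F^*)$ in $H$. The main piece of bookkeeping --- and the only real obstacle --- is the tracking of $\sqrt{r_1+r_2}$ factors: the linear change of variables $(y,s) \mapsto \rho$ has Jacobian $\sqrt{r_1+r_2}$ (since $\mathbf{1}$ has Euclidean length $\sqrt{r_1+r_2}$), and a second $\sqrt{r_1+r_2}$ enters through the Cauchy--Binet relation between the Euclidean covolume of $\Log(\cO_F^*)$ in $H$ and the regulator $R_F$ (the sum of squares of the $r \times r$ minors of the matrix of $\Log$-images of fundamental units equals $(r_1+r_2)R_F^2$). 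These combine with the $r_1+r_2$ in the denominator of the $s$-integral, and after simplifying $(nV_n)^{r_1}(2nV_{2n})^{r_2}\cdot 2^{(n-1)r_2} = n^{r_1+r_2}\,2^{nr_2}\,V_n^{r_1}V_{2n}^{r_2}$ I obtain $I_\infty = 2^{nr_2} n^{r_1+r_2-1} V_n^{r_1} V_{2n}^{r_2} R_F$. Multiplying by $|\Delta_F|^{-n/2}\zeta_F(n)^{-1}$ recovers the stated identity.
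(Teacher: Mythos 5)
Your proposal is correct and closely tracks the paper's own argument: split $\phi = \phi_\f\phi_\infty$, use the Euler product for the finite part, pass to polar and then log variables at each archimedean place, and integrate over the trace direction against the region cut out by $D$. The only divergence is the last linear change of variables: you decompose orthogonally, $\rho = y + s\mathbf{1}$, and so incur two explicit $\sqrt{r_1+r_2}$ factors (one from the Jacobian, since $\mathbf{1}$ has length $\sqrt{r_1+r_2}$, and one via Cauchy--Binet relating the Euclidean covolume of $\Log(\cO_F^*)$ in $H$ to the minor-determinant regulator $R_F$), which you correctly observe cancel against the $(r_1+r_2)^{-1}$ produced by the $s$-integral, whereas the paper uses the skew substitution $y_1 = \sum_\sigma x_\sigma$, $y_i = x_{\sigma_i} - (e_{\sigma_i}/d)\,y_1$ ($i\geq 2$), whose Jacobian is exactly $1$ and whose $(y_2,\ldots,y_{r_1+r_2})$-integral produces $R_F$ directly from the minor-determinant definition, with no square-root bookkeeping at all.
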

\begin{proof}
From the previous sections (see e.g. the proof of Theorem \ref{thm:prim}), it is clear that
\begin{equation*}
\int_{\A_\f^n} \phi_\f d\alpha^n = \frac{1}{\zeta_F(n)}.
\end{equation*}
It remains to concern ourselves with $\phi_\infty$. On each $F_\sigma$ for $\sigma \mid \infty$, take the polar coordinates $d\alpha^n_\sigma = r_\sigma^{n-1}dr_\sigma d\theta_\sigma$ if $\sigma$ is real, and $d\alpha^n_\sigma = 2^nr_\sigma^{2n-1}dr_\sigma d\theta_\sigma$ if $\sigma$ is complex. Let $e_\sigma = 1$ if $\sigma$ real and $e_\sigma = 2$ for $\sigma$ complex. Then
\begin{align*}
\int_{\A_\infty^n} \phi_\infty d\alpha^n &= (nV_n)^{r_1}(2nV_{2n})^{r_2}\int \prod_{\sigma\, \mathrm{real}} r_\sigma^{n-1} dr_\sigma \prod_{\sigma\, \mathrm{cplx}} r_\sigma^{2n-1} 2^ndr_\sigma \\
&= V_n^{r_1}V_{2n}^{r_2}n^{r_1+r_2}2^{nr_2}\int \prod_{\sigma \mid \infty} r_\sigma^{e_\sigma n-1}e_\sigma dr_\sigma,
\end{align*}
where the region of integration is the set of all $(r_\sigma)_{\sigma \mid \infty} \subseteq (\R_{>0})^{r_1+r_2}$ such that
\begin{equation*}
\prod_{\sigma \mid \infty} r_\sigma^{e_\sigma} \leq 1, \mbox{ and } \left( e_\sigma\log r_\sigma - \frac{e_\sigma}{d}\log\prod_{\rho \mid \infty} r_\rho^{e_\rho}\right)_{\sigma \mid \infty} \in D.
\end{equation*}
To simplify, let us take the change of coordinates $x_\sigma = e_\sigma\log r_\sigma$. Then the above integral becomes
\begin{equation*}
V_n^{r_1}V_{2n}^{r_2}n^{r_1+r_2}2^{nr_2}\int \prod_{\sigma \mid \infty} e^{nx_\sigma}dx_\sigma,
\end{equation*}
where the region of the integration is the set of all $(x_\sigma)_{\sigma \mid \infty}$ satisfying
\begin{equation*}
\sum_{\sigma \mid \infty} x_\sigma \leq 0, \mbox{ and } \left( x_\sigma - \frac{e_\sigma}{d}\sum_{\rho \mid \infty} x_\rho \right)_{\sigma \mid \infty} \in D.
\end{equation*}
We make one more change of coordinates, by putting
\begin{equation*}
y_1 = \sum_\sigma x_\sigma, y_i = x_{\sigma_i} - \frac{e_{\sigma_i}}{d}y_1 \mbox{ for } i=2, \ldots, r_1+r_2,
\end{equation*}
or equivalently,
\begin{equation*}
x_{\sigma_1} = \frac{e_{\sigma_1}}{d}y_1 - y_2 - \ldots - y_{r_1+r_2},\, x_{\sigma_i} = y_i + \frac{e_{\sigma_i}}{d}y_1 \mbox{ for } i=2, \ldots, r_1+r_2.
\end{equation*}
One computes the Jacobian matrix to be
\begin{equation*}
\left(\frac{\partial x_{\sigma_i}}{\partial y_j}\right)_{1 \leq i,j \leq r_1+r_2} =
\begin{pmatrix}
\frac{e_{\sigma_1}}{d} & -1 & \cdots & -1 \\
\frac{e_{\sigma_2}}{d} & 1 & & \\
\vdots & & \ddots & \\
\frac{e_{\sigma_{r_1+r_2}}}{d} & & & 1
\end{pmatrix},
\end{equation*}
whose determinant is equal to $1$. Therefore, we have
\begin{equation*}
\int_{\A_\infty^n} \phi_\infty d\alpha^n = V_n^{r_1}V_{2n}^{r_2}n^{r_1+r_2}2^{nr_2} \int e^{ny_1} dy_1 \ldots dy_{r_1+r_2},
\end{equation*}
where the region of the integration is given by the conditions
\begin{equation*}
y_1 \leq 0, (-y_2- \ldots -y_{r_1+r_2}, y_2, \ldots, y_{r_1+r_2}) \in D.
\end{equation*}
Therefore
\begin{equation*}
\int_{\A_\infty^n} \phi_\infty d\alpha^n =V_n^{r_1}V_{2n}^{r_2}n^{r_1+r_2-1}2^{nr_2}R_F.
\end{equation*}
This proves the proposition.
\end{proof}

Let us now estimate the second moment
\begin{equation*}
\int_{{\mathcal X}_n} \left( \sum_{x \in F^n \backslash \{0\}} f_B(xg) \right)^2 d\mu.
\end{equation*}

As in the previous section, this is equal to
\begin{equation*}
\int_{\A^{2n}} f_B(x_1) f_B(x_2) d\alpha^n(x_1)d\alpha^n(x_2) + \sum_{c \in F^*} \int_{\A^n} f_B(x)f_B(cx) d\alpha^n(x),
\end{equation*}
and the first integral is simply
\begin{equation*}
\alpha(f_B)^2 = (CB^n)^2
\end{equation*}
with
\begin{equation*}
C = \frac{V_n^{r_1}V_{2n}^{r_2}n^{r_1+r_2-1}2^{nr_2}h_FR_F}{|\Delta_F|^{\frac{n}{2}}\zeta_F(n)w_F}
\end{equation*}
by Proposition \ref{prop:pn-1}. The second sum of integrals can again be handled as in the previous section to be shown to be bounded by a constant times
\begin{align}
&\sum_q \sum_p \sum_{u \in \mathcal{O}^*_F}\int_{\mathbb{A}^n} f_{B}(x) f_{B}(\gamma ux) d\alpha^n \notag \\
&=\sum_q \sum_p \sum_{u \in \mathcal{O}^*_F}\sum_{1\leq i, j \leq h_F} \frac{B^n}{w_F^2} \int_{\mathbb{A}^n} \phi(a_i x) \phi(a_j\gamma ux) d\alpha^n, \label{eq:pn-1oink}
\end{align}
where $p,q,\gamma=\gamma(pq^{-1})$ are as in \eqref{eq:2ndmtp1}. Hence we are led to investigate
\begin{equation*}
\sum_{u \in \mathcal{O}^*_F} \int_{\mathbb{A}^n} \phi(a_i x) \phi(a_j\gamma ux) d\alpha^n = \sum_{u \in \mathcal{O}^*_F} \int_{\mathbb{A}^n} \phi(x) \phi(a_ja_i^{-1}\gamma ux) d\alpha^n
\end{equation*}
for each $1 \leq i, j \leq h_F$. Applying \eqref{eq:oink2} to the $\A_\f$ part of the integral, we find that we may bound this by
\begin{equation*}
\frac{\alpha_\f^n(\phi_\f)}{|\Delta_F|^{\frac{n}{2}}(N\tilde q)^n} \sum_{u \in \mathcal{O}^*_F} \int_{\mathbb{A}_\infty^n} \phi_\infty(x_\infty) \phi_\infty((a_ja_i^{-1}\gamma ux)_\infty) d\alpha_\infty^n,
\end{equation*}
where $\tilde q = q \cdot I_1^{-\beta_1} \cdots I_h^{-\beta_h}$ with $\beta_i = \min(1,\ord_{I_i}(q))$. Moreover, we may bound
\begin{equation*}
\sum_{u \in \cO_F^*} \phi_\infty((a_ja_i^{-1}\gamma ux)_\infty) \leq 1,
\end{equation*}
since for each $x_\infty \in \A_\infty^n$, $\phi_\infty((a_ja_i^{-1}\gamma ux)_\infty)$ is nonzero for at most one $u \in \cO_F^*$. Therefore
\begin{equation*}
\sum_{u \in \mathcal{O}^*_F} \int_{\mathbb{A}^n} \phi(x) \phi(a_ja_i^{-1}\gamma ux) d\alpha^n \leq \frac{\alpha^n(\phi)}{|\Delta_F|^{\frac{n}{2}}(N\tilde q)^n}.
\end{equation*}
We may now proceed as in the estimate of \eqref{eq:2ndmtp1} to prove that \eqref{eq:pn-1oink} is of size $O_F(\alpha_F^n(f_B))$, as desired.


\Addresses

\end{document}